\documentclass[final,12pt,times]{elsarticle}

\usepackage{booktabs} 
\usepackage{epsfig}
\usepackage{amsfonts}
\usepackage{amssymb}
\usepackage{amsmath}
\usepackage{amssymb}
\usepackage{amsthm}
\usepackage{graphics}
\usepackage{graphicx}
\usepackage{subcaption}
\usepackage{mathrsfs}
\usepackage{indentfirst}
\usepackage{stmaryrd}
\usepackage{latexsym}
\usepackage{xypic}
\usepackage{tikz}
\usepackage{hyperref}
\usepackage{listings}
\usepackage{algorithm}
\usepackage{algorithmic}
\usepackage{tikz-cd}
\usepackage{amsmath,amssymb}
\usepackage{mathrsfs}
\usepackage{CJK}
\biboptions{sort&compress}
\theoremstyle{plain}
\theoremstyle{definition}
\newtheorem{definition}{Definition}[section]
\newtheorem{lemma}[definition]{Lemma}
\newtheorem{openproblem}[definition]{Open problem}
\newtheorem{theorem}[definition]{Theorem}
\newtheorem{proposition}[definition]{Proposition}
\newtheorem{example}[definition]{Example}

\newtheorem{remark}[definition]{Remark}
\newtheorem{corollary}[definition]{Corollary}
\theoremstyle{break}

\journal{Annals of Pure and Applied Logic}

\newcommand{\FARL}{\mathbb{FARL}}
\newcommand{\FAIRL}{\mathbb{FAIRL}}
\newcommand{\MRL}{\mathbb{MRL}}
\newcommand{\FARDL}{\mathbb{FARDL}}

\newcommand{\CFAD}{\mathbb{FA\text{-}cDRDL}}
\newcommand{\CFADp}{\mathbb{FA\text{-}cDRDL}^{\prime}}
\newcommand{\FLew}{\mathbf{FL}_{ew}}
\newcommand{\FLFGC}{\mathbf{FL}^{\mathrm{FGC}}_{ew}}
\newcommand{\Kfun}{\mathbf K}
\newcommand{\Cfun}{\mathbf C}
\newcommand{\Id}{\operatorname{Id}}
\makeatletter
\newcommand{\FARLtag}[1]{%
  \phantomsection%
  \def\@currentlabel{\textnormal{(FARL#1)}}%
  \label{FARL#1}%
  \textnormal{(FARL#1)}%
}
\makeatother
\makeatletter
\newcommand{\fKtag}[1]{%
  \phantomsection%
  \protected@edef\@currentlabel{%
    \noexpand\ensuremath{(f_K#1)}%
  }%
  \label{fK#1}%
  \ensuremath{(f_K#1)}%
}
\makeatother
\makeatletter
\newcommand{\fctag}[1]{%
  \phantomsection%
  \protected@edef\@currentlabel{%
    \noexpand\ensuremath{(f_c#1)}%
  }%
  \label{fc#1}%
  \ensuremath{(f_c#1)}%
}
\makeatother

\begin{document}

\begin{frontmatter}


\title{\textbf{Frobenius–Galois expansions of substructural logics:
Algebraization, Kalman equivalence and positive-cone
semantics}}

\author{Juntao Wang$^{a,*}$, Jieqiong Shi$^{b}$, Mei Wang$^{c}$}
\cortext[cor1]{Corresponding author. \\
Email addresses: wjt@xsyu.edu.cn (J. T. Wang), jieqiong6912@126.com (J. Q. Shi),\\ wangmeimath@163.com (M. Wang).}

\address[A] {School of Science, Xi'an Petroleum University, Xi'an 710065, Shaanxi, P.R. China}
\address[B]{School of Mathematics and Data Science, Shaanxi University of Science and Technology, Xi’an 710021, China}
\address[C]{School of Science, Xi Hang University, Xi'an 710077, Shaanxi, P.R. China}

\begin{abstract}
The main aim of this paper is to introduce a Frobenius--Galois expansion of the substructural logic 
$\mathbf{FL}_{ew}$ and develop its algebraic and categorical semantics. 
The resulting logic, denoted by $\mathbf{FL}^{\mathbf{FGC}}_{ew}$, is 
obtained by adjoining a pair of unary connectives forming a Galois 
connection and satisfying suitable Frobenius-type compatibility 
conditions. Firstly, we prove that $\mathbf{FL}^{\mathbf{FGC}}_{ew}$ is 
algebraizable in the sense of Blok and Pigozzi and identify its 
equivalent algebraic semantics with the variety of Frobenius-adjoint 
residuated lattices, establishing the conservativity over 
$\mathbf{FL}_{ew}$ and relating its finite model property to residual 
finiteness of finitely generated free algebras. Secondly, for the distributive setting, we lift the Kalman construction to the 
Frobenius-adjoint algebraic framework. More precisely, we establish a categorical 
equivalence between Frobenius-adjoint residuated distributive lattices 
and Frobenius-adjoint $c$-differential residuated distributive lattices 
with the condition $\mathbf{CK}$. This equivalence yields a 
positive-cone representation of the former structures and, in turn, a 
logical counterpart of the categorical correspondence. Finally, for the 
distributive extension $\mathbf{FL}^{\mathbf{FGC},d}_{ew}$, we prove 
that derivability, validity over Frobenius-adjoint residuated 
distributive lattices, and validity over the corresponding positive 
cones determine the same consequence relation, and further show that 
this correspondence preserves equational and quasi-equational 
consequence, conservativity, and finite countermodels. These results 
provide a unified algebraic, categorical, and logical framework for 
Frobenius--Galois expansions of substructural logics.
\end{abstract}

\begin{keyword}
Substructural logic \sep Frobenius--Galois connection \sep
residuated lattice \sep Kalman functor \sep
categorical equivalence \sep finite model property




\MSC[2010] 03B47, 06B05 
\end{keyword}

\end{frontmatter}

\section{Introduction}
\label{intro}

The reader is expected to be familiar with the fundamental aspects of the Kalman construction in the setting of residuated lattices, as presented in \cite{Castiglionib,Castiglionic,Sagastume,Sagastume1,Tsinakis}.
  Substructural logics form a family of nonclassical logics that can be roughly defined as those logic systems such that, when presented by means of a Gentzen-style calculus, lack some of the structural rules (exchange, contraction and weakening), which occur in standard sequent systems of classical and intuitionistic logics \cite{Galatos,Metcalfe}. As such they encompass a variety of systems independently developed since mid 20th century, including relevant logics or many-valued logics like monoidal logic (not satisfying contraction), linear logic (which, besides the former two, does not satisfy exchange either).  The  Gentzen system $\mathbf{FL}$ of full Lambek calculus, the weakest logic considered in substructural logics, is obtained from the Gentzen system $\mathbf{LJ}$ for intuitionistic propositional logic by removing three structural rules. As a very important extension of $\mathbf{FL}$, the sequent calculus $\mathbf{FL_{ew}}$ is obtained from the intuitionistic logic by only deleting the contraction rule. Researchers always call \emph{substructural logics over $\mathbf{FL_{ew}}$} or \emph{logics without contraction rule} \cite{Ono}, although the contraction rule holds in some of them. The class of logics without the contraction rule contains intermediate logics, {\L}ukasiewicz's many-valued logics \cite{Chang} and fuzzy logics in the sense of H\'{a}jek \cite{Hajek}. Our main technical tool is algebraic in nature and is based on close connections between logics over $\mathbf{FL_{ew}}$ and the variety of residuated lattices \cite{Blount,Metcalfe,Cintula1,Galatos,Ono}.
  
  Galois connections provide one of the most elementary order-theoretic
forms of adjunction and have been used to organize pairs of
approximation, modal and quantifier-like operators on ordered
algebraic structures\cite{Ore1944,JarvinenKondoKortelainen2008,DzikJarvinenKondo2010}.
For a pair of the two maps $g,f$ on a partially ordered algebra, 
\[
    g(x)\leq y
    \Longleftrightarrow
    x\leq f(y)
\]
already entails strong order-theoretic consequences: the left adjoint
$g$ preserves joins, whereas the right adjoint $f$ preserves meets.
Nevertheless, adjointness alone says essentially nothing about how the
two unary operators interact with the additional algebraic operations.
This becomes particularly relevant for residuated lattices, whose
multiplicative and residual operations form an adjoint pair,
\[
    x\odot y\leq z
    \Longleftrightarrow
    y\leq x\to z
\]
Thus, once a second adjunction $g\dashv f$ is placed on a residuated
lattice, a natural structural question is not merely whether the unary
operators are adjoint, but whether this new adjunction is compatible
with the intrinsic residuated adjunction. A classical paradigm for such compatibility is provided by
\emph{Frobenius reciprocity}. In categorical logic, existential
quantification is represented by a left adjoint to substitution, and
its interaction with conjunction is expressed by a Frobenius law of
the form
\[
   \exists_x(\varphi\sqcap\psi)
   =
   (\exists_x\varphi)\sqcap\psi
\]
whenever $\psi$ is independent of $x$, this adjoint interpretation goes
back to Lawvere~\cite{Lawvere1969}. More generally, Frobenius
reciprocity can be viewed as an additional compatibility condition on
an adjoint connection rather than as a consequence of adjointness
itself, its close relationship with Galois connections, residuation,
and lattice-theoretic structures has been emphasized recently in \cite{GoswamiJanelidzeManuell2025}.
This observation suggests a natural refinement in the setting of
residuated lattices: the unary Galois connection should be required to
interact coherently with the residual and lattice operations, rather
than being added to the reduct as an independent order-theoretic
structure. This motivates the introduction of \emph{Frobenius-adjoint residuated lattices}, where the interaction between the unary Galois adjunction and the residuated adjunction is captured by suitable Frobenius-type compatibility laws.

Accordingly, the first aim of this paper is to introduce and systematically investigate Frobenius-adjoint residuated lattices from both algebraic and logical perspectives. In Section~\ref{section:algebraization-FGC}, we introduce the Hilbert calculus, prove its soundness and completeness with respect to the variety of Frobenius-adjoint residuated lattices, and show that the resulting logic is algebraizable in the sense of Blok and Pigozzi. We further study this new class of residuated lattices by imposing suitable Frobenius-type compatibility conditions on a unary Galois connection and establish its algebraic properties.

Constructive logic with strong negation \cite{Rasiowa} was created for logical reasons: the
intuitionistic negation does not have good constructive properties. From the beginning, constructive logic with strong negation was closely related to intuitionistic logic. The algebraic counterpart of that logic, first studied by Rasiowa and later defined as a variety by Brignole and Monteiro \cite{Brignolea,Brignoleb}, is called Nelson algebras.  The algebraic approach of the relationship between intuitionism and constructive
logic with strong negation appears: a characterization of Nelson algebras as pairs of disjoint elements of Heyting algebras \cite{Vakarelov}. Cignoli \cite{Cignoli} shows that this characterization can be formalized from a categorical point of view as an adjunction between the categories of Heyting algebras and Nelson algebras, which restricted to centered Nelson algebras becomes an equivalence that is induced by the Kalman functor, motivated by a construction due to Kalman \cite{Kalman}, where the original Kalman construction applied to a bounded distributive lattice $\mathbf{L}$ gives the centered Kleene algebra \begin{center} $\mathbf{K}(\mathbf{L})=\{(x,y)\in L\times L \mid x\wedge y=0\}$.
\end{center} Motivated by \cite{Cignoli},  Castiglioni, Menni and Sagastume \cite {Castiglionib} replaced “$\wedge$” by “$\odot$” and applied the Kalman construction for a residuated distributive lattice $L$ by 
\begin{center} $\mathbf{K}(\mathbf{L})=\{(x,y)\in L\times L^o \mid x\odot y=0\}$,
\end{center} where $L^o$ is the dual order of $L$. This assignment $\mathbf{K}$ extends to a functor $\mathbf{K}:\mathbb{RDL}\rightarrow \mathbb{D\text{iff}RDL}$, where $\mathbb{RDL}$ is the category of residuated distributive lattices and $\mathbb{D\text{iff}RDL}$ is the category  of c-differential residuated distributive lattices, which actually are involutive residuated distributive lattices with a center $c$ satisfying the ``Leibniz'' condition, respectively. They also proved in \cite{Cignoli} that $\mathbf{K}$ has a left adjoint $\mathbf{C}:\mathbb{D\text{iff}RDL}\rightarrow \mathbb{RDL}$, showing that the adjunction $\mathbf{C}\dashv \mathbf{K}:\mathbb{RDL}\rightarrow \mathbb{D\text{iff}RDL}$ restricts to an equivalence $\mathbf{C}\dashv \mathbf{K}:\mathbb{RDL}\rightarrow \mathbb{D\text{iff}RDL'}$, where the category $\mathbb{D\text{iff}RDL'}$ is the full subcategory of $\mathbb{D\text{iff}RDL}$ whose objects $\boldsymbol{\mathfrak{L}}$ satisfy the condition $(\mathbf{CK})$, 
\begin{center}$(\mathbf{CK})$~ For any $x,y\in\mathcal{L}$ with $x,y\geq c,x\otimes y\leq c$, there exists $z\in \mathcal{L}$ such that $z\cup c=x,{\sim}z\cup c=y$.
\end{center}

The second motivation for our study stems from results due to Castiglioni et al. relating residuated distributive lattices and  c-differential residuated distributive lattices with $\mathbf{CK}$ by the functor $\mathbf{K}$, and the conviction that these can be lifted to the level of Frobenius-adjoint residuated distributive lattices. Let $\mathbb{FARDL}$ be the category of Frobenius-adjoint residuated distributive lattices. The main aim of our investigation is to isolate a reasonable category which completes the diagram:
\begin{center}
\begin{tikzpicture}
  \node[inner sep=2pt] (MV^bullet) at (0,0) {$\mathbb{RDL}$};
  \node[inner sep=2pt] (MV) at (0,3) {$\mathbb{FARDL}$ };
  \node[inner sep=2pt] (MDRL) at (3,0) {$\mathbb{D\text{iff}RDL'}$};
  \node[inner sep=2pt] (iIRL_0) at (3,3) {$\mathbb{?}$};  
\draw[->] ([yshift=3pt]MV.east) -- ([yshift=3pt]iIRL_0.west) node[midway, above=3pt] {$\mathbf{K}$};
\draw[<-] ([yshift=-3pt]MV.east) -- ([yshift=-3pt]iIRL_0.west) node[midway, below=3pt] {$\mathbf{C}$};
\draw[->] ([yshift=3pt]MDRL.west) -- ([yshift=3pt]MV^bullet.east) node[midway, above=3pt] {$\mathbf{K}$};
\draw[<-] ([yshift=-3pt]MDRL.west) -- ([yshift=-3pt]MV^bullet.east) node[midway, below=3pt] {$\mathbf{C}$};

\draw[->] (MV) -- (MV^bullet) node[midway, left=2pt] {$\mathbf{F}$};
  \draw[->] (iIRL_0) -- (MDRL) node[midway, right=2pt] {$\mathbf{F}$};
\end{tikzpicture} 
\begin{center}  {\bf{Figure 2.}} Diagram of the functors $\mathbf{K}$ and $\mathbf{C}$ relate the categories $\mathbb{FARDL}$ and $\mathbb{?}$ .
\end{center}
\end{center}
where $\mathbb{FARDL}\rightarrow \mathbb{RDL}$ is the obvious forgetful functor. The category completing this square should be a category of c-differential residuated distributive lattices with the condition $\mathbf{CK}$ equipped with a Frobenius-adjoint  structure in such a way that the two structures interact in a reasonable way. Note here that c-differential residuated distributive lattices satisfy double negation laws, hence in the definition of the corresponding Frobenius-adjoint algebras, it is possible to use only one of the Frobenius right and left adjoint operators as primitive, the other being definable as the dual of the one taken as part of the signature. However, the definition of Frobenius-adjoint residuated distributive lattices requires the introduction of two kinds of Frobenius-adjoint operators simultaneously, because these Frobenius right and left adjoint operators are not mutually interdefinable. It is clear that the structure of a Frobenius-adjoint c-differential residuated distributive lattice $(\boldsymbol{\mathfrak{L}},f_c)$ is simpler than that of a weak tense residuated distributive lattice $(\mathbf{L},f,g)$, because it has only one of the Frobenius right and left adjoint operators. In view of this point, the result of the above diagram shows that one can equivalently replace a complex category with a simple one, which will be of great benefit to study the algebraic structure of the complex one. 

Accordingly, the second aim of this paper is to complete the above categorical diagram. In Section~\ref{section4.}, we introduce an appropriate category of Frobenius-adjoint $c$-differential residuated distributive lattices satisfying $\mathbf{CK}$, and extend the Kalman functor $\mathbf{K}$ to the Frobenius-adjoint setting. We then prove that the resulting functors establish an equivalence between this category and $\mathbb{FARDL}$.

The third motivation for our study concerns the logical significance of the categorical representation developed above. The logical relevance of positive-cone constructions has already appeared in related settings. For instance, positive cones of suitable residuated structures have been used to provide algebraic semantics for paraconsistent Nelson logic \cite{BusanicheCignoli2009}. From a more general perspective, Moraschini \cite{Moraschini2018} showed that adjunctions between algebraic categories can be reflected at the level of relative equational consequence through suitable contextual translations. These results indicate that an algebraic or categorical representation may carry nontrivial logical information and, in particular, suggest that a categorical equivalence should admit a corresponding interpretation at the level of logical consequence. In the present setting, Section~\ref{section4.} establishes an equivalence between the category $\mathbb{FARDL}$ of Frobenius-adjoint residuated distributive lattices and the corresponding category of Frobenius-adjoint $c$-differential residuated distributive lattices satisfying $\mathbf{CK}$. Since every object in the latter category determines a Frobenius-adjoint residuated distributive lattice through its positive cone, it is natural to ask whether this representation also preserves the logical consequence relation associated with $\mathbb{FARDL}$. More precisely, one would like to know whether formulas can be interpreted directly in positive cones without changing their semantic consequence, and whether algebraic properties relevant to the logic, such as equational consequence, conservativity, and the existence of finite countermodels, are preserved under the Kalman equivalence. While the works mentioned above provide important precedents for the logical use of positive cones and for the relation between categorical adjunctions and consequence, a systematic positive-cone semantics of this kind has not been developed in the Frobenius-adjoint setting. Establishing such a connection therefore fills a natural gap between the categorical and logical aspects of the theory and constitutes one of the main contributions of the present paper.

Accordingly, the third aim of this paper is to develop the logical counterpart of the categorical equivalence established in Section~\ref{section4.}. In Section~\ref{sec:positive-cones}, we introduce the distributive logic $\mathbf{FL}^{\mathbf{FGC},d}_{ew}$ and establish its positive-cone semantics over Frobenius-adjoint $c$-differential residuated distributive lattices satisfying $\mathbf{CK}$, proving the equivalence between derivability $\mathbb{FARDL}$-semantics, and positive-cone semantics, and further examine equational consequence, conservativity, and finite countermodels under the Kalman equivalence.

The paper is organized as follows. Section~\ref{section:preliminaries-RL} recalls some basic notions and fundamental results on residuated lattices, $c$-differential residuated distributive lattices, and the Kalman construction. Section~\ref{section:algebraization-FGC} introduces the logic $\mathbf{FL}^{\mathbf{FGC}}_{ew}$, proves its algebraizability, and identifies Frobenius-adjoint residuated lattices as its equivalent algebraic semantics, together with some basic algebraic properties and finite-model properties. Section~\ref{section4.} extends the Kalman construction to the Frobenius-adjoint setting and establishes a categorical equivalence between Frobenius-adjoint residuated distributive lattices and Frobenius-adjoint $c$-differential residuated distributive lattices with the condition $\mathbf{CK}$. Section~\ref{sec:positive-cones} introduces the distributive extension $\mathbf{FL}^{\mathbf{FGC},d}_{ew}$ and develops its positive-cone semantics, relating logical consequence, equational consequence, conservativity, and finite countermodels to the categorical equivalence obtained in Section~\ref{section4.}.

\section{Preliminaries}\label{section:preliminaries-RL}

For the reader's convenience, we include a table summarizing the notation used for classes of algebras throughout the paper, followed by the corresponding definitions.

\begin{table}[H]
\footnotesize
\begin{center}
\begin{tabular}{ccc}
\midrule 
\textbf{Class of algebras} &\textbf{Signature} &\textbf{Notation} \\
\midrule
Class of residuated lattices & $(L,\wedge,\vee,\odot,\rightarrow,0,1)$ & $\mathbb{RL}$ \\
Class of residuated distributive lattices & $(L,\wedge,\vee,\odot,\rightarrow,0,1)$ & $\mathbb {RDL}$ \\
Class of c-differential residuated distributive lattices & $(\mathcal{L},\cap,\cup,\otimes,{\sim},c,0,1)$ & $\mathbb{D}\text{iff}\mathbb{RDL}$\\
Class of  c-differential residuated distributive lattices with $\mathbf{CK}$  & $(\mathcal{L},\cap,\cup,\otimes,{\sim},c,0,1)$ & $\mathbb{D}\text{iff}\mathbb{RDL'}$\\
Class of Frobenius-adjoint residuated lattices & $(L,\wedge,\vee,\odot,\rightarrow,f,g,0,1)$ & $\mathbb{FARL}$ \\
Class  of Frobenius-adjoint involutive residuated lattices & $(L,\wedge,\vee,\odot,\rightarrow,f,0,1)$ &  $\mathbb{FAIRL}$ \\
Class of Frobenius-adjoint residuated distributive lattices & $(L,\wedge,\vee,\odot,\rightarrow,f,g,0,1)$ & $\mathbb{FARDL}$ \\
Class of Frobenius-adjoint c-differential residuated distributive lattices & $(\mathcal{L},\cap,\cup,\otimes,\sim,f_c,0,c,1)$ & $\mathbb{FA}\mathbb{D}\text{iff}\mathbb{RDL}$\\
Class of Frobenius-adjoint c-differential residuated distributive lattices with $\mathbf{CK}$  & $(\mathcal{L},\cap,\cup,\otimes,\sim, f_c,0,c,1)$ & $\mathbb{FA}\mathbb{D}\text{iff}\mathbb{RDL'}$\\
\bottomrule
\end{tabular}
\end{center}
\centering
{\bf{Table 1.}}  Notations and Signatures for the class of algebras covered throughout the paper.
\end{table}

In this section, we summarize some basic results concerning the categories $\mathbb{RDL}$, whose objects are residuated distributive lattices, and $\mathbb{D\text{iff}RDL}$ , whose objects are c-differential residuated distributive lattices. The category $\mathbb{D\text{iff}RDL'}$, whose objects are c-differential residuated lattices satisfying the condition $\mathbf{CK}$, arises as the image of $\mathbb{RDL}$ under the Kalman functor $\mathbf{K}$, which establishes the categorical equivalence between them. 
\begin{definition}\label{definietion2.1}\cite{Galatos} An algebraic structure $\mathbf{L}=(L,\wedge,\vee,\odot,\rightarrow,0,1)$ of type $(2,2,2,2,0,0)$ is called a \emph{residuated lattice\footnote{In \cite{Galatos}, the term “residuated lattice” is some times used for more general structures than those considered here. In broader terminology, the structures we define here would be called bounded integral commutative residuated lattices.}} if it satisfies the following conditions:

  (1)\ $(L,\wedge,\vee,0,1)$ is a bounded lattice,

  (2)\ $(L,\odot,1)$ is a  commutative monoid,

  (3)\ $x\odot y\leq z$ iff $x\leq y\rightarrow z$ for any $x,y,z\in L$.
  \end{definition}

   Clearly, the class $\mathbb{RL}$ forms a variety. In what follows, we denote by $\mathbf{L}$ the universe of a residuated lattice $(L,\wedge,\vee,\odot,\rightarrow,0,1)$. In any residuated lattice $\mathbf{L}$, we define other operations
\begin{center}$x^0=1$ and $x^n=x^{n-1}\odot x$  for $n\geq 1$, 

${\neg}x=x\rightarrow 0$ and ${\neg\neg} x=\neg(\neg x)$ for any $x,y\in L$.
\end{center}

  \begin{definition}\label{definietion2.2} \cite{Galatos} A residuated lattice $\mathbf{L}$ is

  (1)\ \emph{distributive} iff its underlying lattice is distributive.

  (2)\ \emph{involutive} iff it satisfies (INV), where
  \begin{center} (INV)~~~$\neg\neg x=x$ 
  \end{center} for any $x\in L$.
  
  \end{definition}
  
Here we recall some results concerning the equivalence of the categories $\mathbb{RDL}$ and $\mathbb{D\text{iff}RDL'}$. To distinguish the objects in $\mathbb{RDL}$ and $\mathbb{D\text{iff}RDL'}$, we use $\boldsymbol{\mathfrak{L}}$ to denote the universe of the involutive residuated lattice in $\mathbb{D\text{iff}RDL'}$. For more exposition on this subject, we refer the reader to \cite{Galatosb,Castiglionib,Castiglionic}.

\begin{definition}\label{definition2.3.}\cite{Galatosb} An algebraic structure $\boldsymbol{\mathfrak{L}}=(\mathcal{L},\cap,\cup,\otimes,{\sim},0,1)$ is an \emph{involutive residuated lattice} if it satisfies the following conditions:

(1)\ $(\mathcal{L},\cap,\cup,0,1)$ is a bounded lattice,

(2)\ $(\mathcal{L},\otimes,1)$ is a commutative monoid,

(3)\ ${\sim}$ is an involution of the lattice that is a dual automorphism,

(4)\ $x\otimes y\leq z$ iff $x\leq {\sim}(y\otimes {\sim} z)$ for any $x,y,z\in \mathcal{L}$.
\end{definition}
In what follows, we denote by $\boldsymbol{\mathfrak{L}}$ the universe of an involutive residuated lattice $(\mathcal{L},\cap,\cup,\otimes,{\sim},0,1)$.

\begin{remark}\label{remark2.4.} Note that involutive residuated lattices and dualizing residuated lattices are equivalent (see  \cite{Castiglionib}, Remark 4.1). Recalling in \cite{Castiglionib} that an algebra $(\mathcal{L},\cap,\cup,\otimes,\leadsto,0,1)$ is said to be a \emph{dualizing residuated lattice} provided it satisfies the following conditions:

(1) $(\mathcal{L},\cap,\cup,\otimes,\leadsto,0,1)$ is a residuated lattice,

(2) $0$ is a cyclic dualizing element. That is
$(x\leadsto 0)\leadsto 0=x$, for all $x\in \mathcal{L}$.
\end{remark}

An involutive residuated lattice $\boldsymbol{\mathfrak{L}}$ is called \emph{centered} if it contains a distinguished element, called a \emph{center}, which is a fixed point of the involution. A \emph{c-differential residuated lattice} is an involutive residuated lattice $\boldsymbol{\mathfrak{L}}$ with center $c$ satisfying the ``Leibniz''
 condition (see \cite{Castiglionib}, Definition 7.2):
\begin{center} 
$(x \otimes y) \cap c = ((x \cap c) \otimes y) \cup (x \otimes (y \cap c))$,
\end{center}
for all $x,y \in \boldsymbol{\mathfrak{L}}$. A c-differential residuated lattice is said to be \emph{distributive} if its underlying lattice is a bounded distributive lattice. \medskip

Castiglioni et al. constructed in \cite{Castiglionib} a c-differential residuated distributive lattice from a residuated distributive lattice. In what follows, we provide a summary of this construction.\medskip

Let $\mathbf{L}\in \mathbb{RDL}$. Define $\mathbf{K}(L)$ in the following way:
\begin{center} $\mathbf{K}(L)=\{(x,y)\in L\times L^o \mid x\odot y=0\}$,
\end{center}
where $L^o$ is the dual order of $L$ (see \cite{Castiglionib}, Section 7). Define the operations in $\mathbf{K}(L)$:
\begin{center} $(x,y)\cup (z,w)=(x\vee z,y\wedge w)$,

$(x,y)\cap (z,w)=(x\wedge z,y\vee w)$,

$(x,y)\otimes(z,w)=(x\odot z,(x\rightarrow w)\wedge (z\rightarrow y))$,

$(x,y)\leadsto(z,w)=((x\rightarrow z)\wedge (w \rightarrow y), w\odot x)$.

\end{center}
Then the algebra $\mathbf{K(L)}=(\mathbf{K}(L),\cap,\cup,\otimes,\leadsto,(0,1),(1,0))$ is a dualizing residuated distributive lattice, and $(1,1)$ is a cyclic dualizing element, the induced order $\leq$ by $\cap$ in $\mathbf{K(L)}$ is given by
 \begin{center} $(x_1,y_1)\leq (x_2,y_2)$ if and only if $x_1\leq x_2$ and $y_1\geq y_2$
 \end{center}
 (see \cite{Tsinakis}, Corollary 3.5). The algebraic structure $(\mathbf{K}(L),\cap,\cup,\otimes,{\sim},(0,1),(1,0)),$ with 
\begin{center} ${\sim}(x,y)=(x,y)\leadsto (1,1)=(y,x)$
 \end{center}
is an involutive residuated lattice with $c=(0,0)$ (see  \cite{Castiglionib}, Remark 4.1), and an object of $\mathbb{D\text{iff}RDL}$. The assignment $\mathbf{K}$ extends to a functor $\mathbf{K}:\mathbb{RDL}\rightarrow \mathbb{D\text{iff}RDL}$ (see \cite{Castiglionib}, Lemma 7.1).

\medskip

Let $\boldsymbol{\mathfrak{L}}\in \mathbb{D\text{iff}RDL}$ and $\mathcal{C}(\mathcal{L})=\{x\in \mathcal{L}\mid x\geq c\}$. In  $\mathcal{C}(\mathcal{L})$ define the product
\begin{center} $x\otimes_cy=(x\otimes y)\cup c$,
\end{center}
the bottom as the constant $c$ and the other operations as those induced from $\boldsymbol{\mathfrak{L}}$ with ${\sim}x_c=x\leadsto c$. Then $\mathcal{C}(\boldsymbol{\mathfrak{L}})=(\mathcal{C}(\mathcal{L}),\cap,\cup,\otimes_c,\leadsto,c,1)\in\mathbb{D\text{iff}RDL}$ and $\boldsymbol{\mathfrak{L}}\mapsto \mathcal{C}(\boldsymbol{\mathfrak{L}})$ defines another functor
\begin{center} $\mathbf{C}:\mathbb{D\text{iff}RDL}\rightarrow\mathbb{RDL}$ ,
\end{center}
which is left adjoint to $\mathbf{K}$ (see \cite{Castiglionib}, Theorem 7.6).

The adjunction
\begin{center} $\mathbf{C}\dashv \mathbf{K}:\mathbb{RDL}\rightarrow \mathbb{D\text{iff}RDL}$
 \end{center}
restricts to an equivalence $\mathbf{C}\dashv \mathbf{K}:\mathbb{RDL}\rightarrow \mathbb{D\text{iff}RDL'}$ (see \cite{Castiglionib}, Corollary 7.8),
where $\mathbb{D\text{iff}RDL'}$, is the subcategory of the category $\mathbb{D\text{iff}RDL}$, whose objects $\boldsymbol{\mathfrak{L}}$ satisfy the following condition:
\begin{center} $(\mathbf{CK})$ \ ~~~For every pair of element $x,y\in \mathcal{L}$ such that $x,y\geq c$ and $x\otimes y\leq c$, there exists $z\in \mathcal{L}$ such that $z\cup c=x$ and ${\sim}z\cup c=y$.
\end{center}

Castiglioni et al. stated in \cite{Castiglionic} that $\mathbb{D\text{iff}RDL'}$ forms a variety. But, strictly speaking, $\mathbb{D\text{iff}RDL'}$ does not forms a variety. It is the class of all $\kappa$-free reducts of the variety that Castiglioni et al. defined. In precisely, let $\boldsymbol{\mathfrak{L}}\in \mathbb{D\text{iff}RDL'}$. Then there is a map $\kappa:\mathcal{L}\rightarrow \mathcal{L}$ that satisfies the conditions:

\begin{center}

$\kappa x\cup c=c\leadsto x$~~and~~$\kappa x\cup c=x\cup c$.
\end{center}

Conversely, if $\boldsymbol{\mathfrak{L}} \in \mathbb{D\text{iff}RDL'}$ in which there exists an operator $\kappa$ that satisfies the previous equations, then $(\mathbf{CK})$ holds on $\boldsymbol{\mathfrak{L}}$ (see \cite{Castiglionic}, Theorem 1).

\section{Algebraization of substructural logics with Frobenius--Galois connections}\label{section:algebraization-FGC}
\label{section3}

Inspired by an intuitionistic propositional logic with a Galois connection $\mathbf{IntGC}$ \cite{DzikJarvinenKondo2010}, we enlarge the language of $\mathbf{FL}_{ew}$ by introducing two unary connectives $\blacktriangle$ and $\triangledown$. The resulting logic is called  \emph{the substructural logic $\mathbf{FL_{ew}}$ with a Frobenius-Galois connection} and denoted by $\mathbf{FL^{FGC}}_{ew}$. In this section, we first define the Hilbert calculus $\mathbf{FL^{FGC}}_{ew}$ and prove that it is algebraizable, and then identify its equivalent algebraic semantics with the variety of Frobenius-adjoint residuated lattices, and prove conservativity over $\mathbf{FL}_{ew}$, discussing the finite model property of $\mathbf{FL^{FGC}}_{ew}$.

\subsection{Algebraizability of $\mathbf{FL}^{\mathrm{FGC}}_{ew}$}
\label{subsection:algebraizability-FGC}

In this subsection, we introduce the Hilbert-style calculus $\mathbf{FL^{FGC}}_{ew}$ and establish its soundness and completeness with respect to the intended algebraic semantics, further proving that it is algebraizable in the sense of Blok and Pigozzi \cite{BlokPigozzi}.
\medskip

Let
\[
 \mathcal L_{\mathrm{FGC}}
   =\{\sqcap,\sqcup,\&,\Rightarrow,\bot,\top,\triangledown,\blacktriangle\}
\]
be the propositional language obtained from the language of
$\mathbf{FL}_{ew}$ by adding the unary connectives $\triangledown$ and
$\blacktriangle$.  We write $\mathrm{Fm}_{\mathcal L_{\mathrm{FGC}}}$ for the
absolutely free $\mathcal L_{\mathrm{FGC}}$-algebra over a fixed countably
infinite set of propositional variables.  For formulas $\varphi$ and $\psi$, we
use the abbreviation
\begin{center}
 $\varphi\Leftrightarrow\psi
   :=(\varphi\Rightarrow\psi)\sqcap(\psi\Rightarrow\varphi)$.
\end{center}

\begin{definition}\label{def:logic-FGC}
The logic $\mathbf{FL}^{\mathrm{FGC}}_{ew}$ is the expansion of
$\mathbf{FL}_{ew}$ obtained by adding the axiom schemes:
\begin{align}
 \triangledown\varphi&\Rightarrow\varphi,
   \tag{FGC1}\label{ax:FGC1}\\
 \triangledown(\blacktriangle\varphi\Rightarrow\psi)
   &\Leftrightarrow
   (\blacktriangle\varphi\Rightarrow\triangledown\psi),
   \tag{FGC2}\label{ax:FGC2}\\
 \triangledown(\blacktriangle\varphi\sqcup\psi)
   &\Leftrightarrow
   (\blacktriangle\varphi\sqcup\triangledown\psi),
   \tag{FGC3}\label{ax:FGC3}\\
 \blacktriangle(\varphi\sqcap\blacktriangle\psi)
   &\Leftrightarrow
   (\blacktriangle\varphi\sqcap\blacktriangle\psi).
   \tag{FGC4}\label{ax:FGC4}
\end{align}
In addition to modus ponens
\[
 (\mathrm{MP})\qquad
 \frac{\varphi\qquad\varphi\Rightarrow\psi}{\psi},
\]
two Galois rules
\[
 (\mathrm{GC}_1)\qquad
 \frac{\blacktriangle\varphi\Rightarrow\psi}
      {\varphi\Rightarrow\triangledown\psi},
 \qquad
 (\mathrm{GC}_2)\qquad
 \frac{\varphi\Rightarrow\triangledown\psi}
      {\blacktriangle\varphi\Rightarrow\psi},
\]
and the necessitation rule
\[
 (\mathrm N_{\triangledown})\qquad
 \frac{\varphi}{\triangledown\varphi}.
\]

For $\Gamma\cup\{\varphi\}\subseteq
\mathrm{Fm}_{\mathcal L_{\mathrm{FGC}}}$, a \emph{derivation} of $\varphi$ from
$\Gamma$ is a finite sequence of formulas whose last member is $\varphi$ and
each of whose members is either an element of $\Gamma$, an instance of an axiom
of $\mathbf{FL}_{ew}$, an instance of \eqref{ax:FGC1}--\eqref{ax:FGC4}, or is
obtained from members by one of the rules displayed above.  We write
\[
 \Gamma\vdash_{\mathbf{FL}^{\mathrm{FGC}}_{ew}}\varphi
\]
when such a derivation exists.  A formula $\varphi$ is a \emph{theorem} when
$\varnothing\vdash_{\mathbf{FL}^{\mathrm{FGC}}_{ew}}\varphi$, and denoted by
$\vdash_{\mathbf{FL}^{\mathrm{FGC}}_{ew}}\varphi$.
\end{definition}

A set $T\subseteq\mathrm{Fm}_{\mathcal L_{\mathrm{FGC}}}$ is called a
\emph{theory} of $\mathbf{FL}^{\mathrm{FGC}}_{ew}$, whenever $T\vdash_{\mathbf{FL}^{\mathrm{FGC}}_{ew}}\varphi$,
then $\varphi\in T$.

\begin{proposition}\label{prop:FGC-monotonicity}
Let $T$ be a theory of $\mathbf{FL}^{\mathrm{FGC}}_{ew}$ and
$\varphi,\psi\in\mathrm{Fm}_{\mathcal L_{\mathrm{FGC}}}$.

 (1)~If
 $T\vdash_{\mathbf{FL}^{\mathrm{FGC}}_{ew}}\varphi\Rightarrow\psi$, then
 $T\vdash_{\mathbf{FL}^{\mathrm{FGC}}_{ew}}
   \triangledown\varphi\Rightarrow\triangledown\psi
 ~\text{and}~
 T\vdash_{\mathbf{FL}^{\mathrm{FGC}}_{ew}}
   \blacktriangle\varphi\Rightarrow\blacktriangle\psi$,

 (2)~If
 $T\vdash_{\mathbf{FL}^{\mathrm{FGC}}_{ew}}
   \varphi\Leftrightarrow\psi$, then $T\vdash_{\mathbf{FL}^{\mathrm{FGC}}_{ew}}
   \triangledown\varphi\Leftrightarrow\triangledown\psi
 ~\text{and}~
 T\vdash_{\mathbf{FL}^{\mathrm{FGC}}_{ew}}
   \blacktriangle\varphi\Leftrightarrow\blacktriangle\psi$.
\end{proposition}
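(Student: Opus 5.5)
The plan is to get (1) purely from the two Galois rules $(\mathrm{GC}_1)$, $(\mathrm{GC}_2)$, together with transitivity of $\Rightarrow$, which is available in $\mathbf{FL}_{ew}$. Then (2) follows from (1) by splitting and recombining the biconditional. By Definition~\ref{def:logic-FGC}, a derivation from $T$ may apply any of the displayed rules to members already obtained. So $(\mathrm{GC}_1)$ and $(\mathrm{GC}_2)$ may be applied to formulas derived from $T$, not only to theorems. This is the point I expect to need explicit comment, since it is what makes the rules behave like genuine adjunction steps relative to $T$. Beyond this, only the standard $\mathbf{FL}_{ew}$ facts are needed: identity $\chi\Rightarrow\chi$, and the transitivity schema $(\alpha\Rightarrow\beta)\Rightarrow((\beta\Rightarrow\gamma)\Rightarrow(\alpha\Rightarrow\gamma))$. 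From the latter, two uses of (MP) give: if $T\vdash\alpha\Rightarrow\beta$ and $T\vdash\beta\Rightarrow\gamma$, then $T\vdash\alpha\Rightarrow\gamma$.

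For the $\blacktriangle$ part of (1), I would start from the identity $\blacktriangle\psi\Rightarrow\blacktriangle\psi$. Applying $(\mathrm{GC}_1)$ with $\varphi:=\psi$ gives the unit $\psi\Rightarrow\triangledown\blacktriangle\psi$. Composing $T\vdash\varphi\Rightarrow\psi$ with this unit by transitivity gives $T\vdash\varphi\Rightarrow\triangledown\blacktriangle\psi$. Then $(\mathrm{GC}_2)$ yields $T\vdash\blacktriangle\varphi\Rightarrow\blacktriangle\psi$.

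The $\triangledown$ part is dual. From the identity $\triangledown\varphi\Rightarrow\triangledown\varphi$, $(\mathrm{GC}_2)$ gives the counit $\blacktriangle\triangledown\varphi\Rightarrow\varphi$. Composing with $T\vdash\varphi\Rightarrow\psi$ gives $T\vdash\blacktriangle\triangledown\varphi\Rightarrow\psi$. Then $(\mathrm{GC}_1)$ yields $T\vdash\triangledown\varphi\Rightarrow\triangledown\psi$. Neither the axioms \eqref{ax:FGC1}--\eqref{ax:FGC4} nor $(\mathrm N_\triangledown)$ are needed for this part.

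For (2), recall that $\varphi\Leftrightarrow\psi$ abbreviates $(\varphi\Rightarrow\psi)\sqcap(\psi\Rightarrow\varphi)$. Using the $\mathbf{FL}_{ew}$ theorems $(\alpha\sqcap\beta)\Rightarrow\alpha$ and $(\alpha\sqcap\beta)\Rightarrow\beta$ with (MP), we get $T\vdash\varphi\Rightarrow\psi$ and $T\vdash\psi\Rightarrow\varphi$. Applying (1) to each gives both directions for $\triangledown$ and for $\blacktriangle$. Finally, to recombine the two directions into a conjunction, I would use the $\mathbf{FL}_{ew}$ theorem $\alpha\Rightarrow(\beta\Rightarrow(\alpha\sqcap\beta))$. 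This is derivable using weakening together with $(\gamma\Rightarrow\alpha)\Rightarrow((\gamma\Rightarrow\beta)\Rightarrow(\gamma\Rightarrow\alpha\sqcap\beta))$. Two applications of (MP) then give $T\vdash\triangledown\varphi\Leftrightarrow\triangledown\psi$ and $T\vdash\blacktriangle\varphi\Leftrightarrow\blacktriangle\psi$.
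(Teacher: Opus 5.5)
Your proposal is correct and follows the paper's proof step for step: for $\triangledown$, derive $\blacktriangle\triangledown\varphi\Rightarrow\varphi$ via $(\mathrm{GC}_2)$, compose with $\varphi\Rightarrow\psi$, then apply $(\mathrm{GC}_1)$; for $\blacktriangle$, derive $\psi\Rightarrow\triangledown\blacktriangle\psi$ via $(\mathrm{GC}_1)$, compose, then apply $(\mathrm{GC}_2)$; and (2) follows by applying (1) to both halves of the biconditional. Your extra remarks only spell out what the paper leaves implicit, namely that the Galois rules may be applied to formulas derived from $T$, and how $\sqcap$ is split and recombined in $\mathbf{FL}_{ew}$.
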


\begin{proof}
The reflexivity theorem
$\triangledown\varphi\Rightarrow\triangledown\varphi$, followed by
$(\mathrm{GC}_2)$ with $\varphi:=\triangledown\varphi$ and
$\psi:=\varphi$, gives
\[
 T\vdash_{\mathbf{FL}^{\mathrm{FGC}}_{ew}}
 \blacktriangle\triangledown\varphi\Rightarrow\varphi.
\]

If $T\vdash_{\mathbf{FL}^{\mathrm{FGC}}_{ew}}\varphi\Rightarrow\psi$, then by the transitivity of implication in $\mathbf{FL}_{ew}$,
\[
 T\vdash_{\mathbf{FL}^{\mathrm{FGC}}_{ew}}
 \blacktriangle\triangledown\varphi\Rightarrow\psi.
\]

Applying $(\mathrm{GC}_1)$ gives
\[
 T\vdash_{\mathbf{FL}^{\mathrm{FGC}}_{ew}}
 \triangledown\varphi\Rightarrow\triangledown\psi.
\]

For the second unary connective, start with the reflexivity theorem
$\blacktriangle\psi\Rightarrow\blacktriangle\psi$.  Applying $(\mathrm{GC}_1)$, with
$\varphi:=\psi$ and $\psi:=\blacktriangle\psi$, gives
\[
 T\vdash_{\mathbf{FL}^{\mathrm{FGC}}_{ew}}
 \psi\Rightarrow\triangledown\blacktriangle\psi.
\]

Combining this formula with $T\vdash_{\mathbf{FL}^{\mathrm{FGC}}_{ew}}\varphi\varphi\Rightarrow\psi$ yields
\[
 T\vdash_{\mathbf{FL}^{\mathrm{FGC}}_{ew}}
 \varphi\Rightarrow\triangledown\blacktriangle\psi.
\]

Applying $(\mathrm{GC}_2)$ now gives
\[
 T\vdash_{\mathbf{FL}^{\mathrm{FGC}}_{ew}}
 \blacktriangle\varphi\Rightarrow\blacktriangle\psi.
\]

(2) follows by applying (1) to the two
implications occurring in the definition of
$\varphi\Leftrightarrow\psi$.
\end{proof}

Notice that $\mathbf{FL}_{ew}$ is algebraizable with defining equation
$p\approx1$ and equivalence formula $p\Leftrightarrow q$.  We now verify that
the same translations algebraize the present expansion logic $\mathbf{FL}^{\mathrm{FGC}}_{ew}$.  Define
\begin{equation}
 \tau(\varphi)=\{\varphi\approx1\},
 \qquad
 \rho(s\approx t)=\{s\Leftrightarrow t\}.
 \label{eq:algebraizing-translations}
\end{equation}
In an algebraic interpretation, $\triangledown$ is represented by a unary
operator $f$ and $\blacktriangle$ by a unary operator $g$. 

 Accordingly, the
algebraic translations of \eqref{ax:FGC1}--\eqref{ax:FGC4} are
\begin{align}
 f(x)&\le x,
   \label{eq:KFGC1}\\
 f(g(x)\to y)&=g(x)\to f(y),
   \label{eq:KFGC2}\\
 f(g(x)\vee y)&=g(x)\vee f(y),
   \label{eq:KFGC3}\\
 g(x\wedge g(y))&=g(x)\wedge g(y).
   \label{eq:KFGC4}
\end{align}

The two Galois rules translate into the quasi-equations
\begin{align}
 g(x)\le y&\Longrightarrow x\le f(y),
   \label{eq:KGC1}\\
 x\le f(y)&\Longrightarrow g(x)\le y.
   \label{eq:KGC2}
\end{align}

Let $\mathbb K_{\mathrm{FGC}}$ denote the class of all algebras
\[
 \mathscr{A}=(\mathbf{A},f,g)=(A,\wedge,\vee,\odot,\to,f,g,0,1)
\]
whose residuated-lattice reduct belongs to $\mathbb{RL}$ and which satisfy
\eqref{eq:KFGC1}--\eqref{eq:KGC2}.  

For
$\Gamma\cup\{\varphi\}\subseteq
\mathrm{Fm}_{\mathcal L_{\mathrm{FGC}}}$, we write
\[
 \Gamma\models_{\mathbb K_{\mathrm{FGC}}}\varphi
\]
when, for every $\mathscr{A}\in\mathbb K_{\mathrm{FGC}}$ and every valuation
$v:\mathrm{Fm}_{\mathcal L_{\mathrm{FGC}}}\to\mathscr{A}$, the condition
$v(\gamma)=1$ for all $\gamma\in\Gamma$ implies $v(\varphi)=1$.

\begin{theorem}\label{thm:FGC-soundness}
For every $\Gamma\cup\{\varphi\}\subseteq
\mathrm{Fm}_{\mathcal L_{\mathrm{FGC}}}$,
\[
 \Gamma\vdash_{\mathbf{FL}^{\mathrm{FGC}}_{ew}}\varphi
 \Longrightarrow
 \Gamma\models_{\mathbb K_{\mathrm{FGC}}}\varphi.
\]
\end{theorem}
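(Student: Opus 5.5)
The argument is the usual soundness induction on the length of a derivation. Fix $\mathscr A=(\mathbf A,f,g)\in\mathbb K_{\mathrm{FGC}}$ and a valuation $v$ with $v(\gamma)=1$ for all $\gamma\in\Gamma$. I will prove that every member $\chi$ of a derivation from $\Gamma$ satisfies $v(\chi)=1$. Throughout I use the standard residuated-lattice fact $a\to b=1$ iff $a\le b$. Consequently $v(\varphi\Leftrightarrow\psi)=1$ iff $v(\varphi)=v(\psi)$, since $(a\to b)\wedge(b\to a)=1$ iff both conjuncts equal $1$.

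\emph{Base cases.}
\begin{itemize}
\item Members of $\Gamma$ receive value $1$ by hypothesis.
\item Instances of axioms of $\mathbf{FL}_{ew}$ are valid in every residuated lattice; this is the known soundness of $\mathbf{FL}_{ew}$ for $\mathbb{RL}$. Since $v$ interprets $\triangledown,\blacktriangle$ as $f,g$, any $\mathcal L_{\mathrm{FGC}}$-instance of such an axiom is evaluated like a substitution instance with the subformulas treated as atoms, so it too has value $1$ in the reduct $\mathbf A$.
\item For \eqref{ax:FGC1}, $v(\triangledown\varphi\Rightarrow\varphi)=f(v\varphi)\to v\varphi=1$ by \eqref{eq:KFGC1}.
\item For \eqref{ax:FGC2}--\eqref{ax:FGC4}, each axiom is a biconditional whose two sides evaluate to the two sides of \eqref{eq:KFGC2}--\eqref{eq:KFGC4}. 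Those identities make the sides equal, so the biconditional gets value $1$.
\end{itemize}

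\emph{Rules.}
\begin{itemize}
\item For (MP), from $v\varphi=1$ and $v\varphi\to v\psi=1$ we get $1\le v\psi$, hence $v\psi=1$.
\item For $(\mathrm{GC}_1)$, the premise gives $g(v\varphi)\le v\psi$. Then \eqref{eq:KGC1} yields $v\varphi\le f(v\psi)$, which means $v(\varphi\Rightarrow\triangledown\psi)=1$.
\item $(\mathrm{GC}_2)$ is handled in the same way using \eqref{eq:KGC2}.
\item For $(\mathrm N_\triangledown)$, assume $v\varphi=1$. We have $g(1)\le g(1)$, so \eqref{eq:KGC1} gives $1\le f(g(1))$; this only says $f(g(1))=1$. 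The cleaner route is to note that $g(1)\le 1$ holds trivially, so \eqref{eq:KGC1} with $x=y=1$ gives $1\le f(1)$, hence $f(1)=1=f(v\varphi)$.
\end{itemize}

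The main subtlety is the rules. Soundness here is for the \emph{consequence relation}, not merely for theorems, and $(\mathrm{GC}_1)$, $(\mathrm{GC}_2)$ and $(\mathrm N_\triangledown)$ are applied to premises that may depend on $\Gamma$. So I must check that they preserve truth under an arbitrary valuation that validates $\Gamma$. This holds precisely because \eqref{eq:KGC1} and \eqref{eq:KGC2} are quasi-equations imposed on every member of $\mathbb K_{\mathrm{FGC}}$, and the necessitation step only needs $f(1)=1$, which follows from them. Hence truth at $v$ is preserved along the derivation, and $v(\varphi)=1$.
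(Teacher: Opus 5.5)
Your proposal is correct and follows the paper's proof essentially step for step. Both argue by induction on derivations: the axioms (FGC1)--(FGC4) are checked via the identities \eqref{eq:KFGC1}--\eqref{eq:KFGC4} and the fact that $(a\to b)\wedge(b\to a)=1$ iff $a=b$, the Galois rules via the quasi-equations \eqref{eq:KGC1}--\eqref{eq:KGC2}, and $(\mathrm N_\triangledown)$ via $g(1)\le 1\Rightarrow 1\le f(1)$. The only cosmetic point is that you can simply delete your aborted first attempt at $(\mathrm N_\triangledown)$ through $g(1)\le g(1)$.
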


\begin{proof}
Soundness of the $\mathbf{FL}_{ew}$ fragment is standard.  Let
$\mathscr{A}\in\mathbb K_{\mathrm{FGC}}$ and let $v$ be a valuation into
$\mathscr{A}$.  Equation \eqref{eq:KFGC1} gives
$v(\triangledown\varphi)\le v(\varphi)$, so every instance of
\eqref{ax:FGC1} has value $1$.  Equations
\eqref{eq:KFGC2}--\eqref{eq:KFGC4} show that the two sides of each of
\eqref{ax:FGC2}--\eqref{ax:FGC4} have same value.   Note that in any residuated lattice,
\begin{center} $a=b  \Longleftrightarrow
(a\to b)\wedge(b\to a)=1$, 
\end{center}
these axiom schemes are valid.

For $(\mathrm{GC}_1)$, assume
$v(\blacktriangle\varphi\to\psi)=1$.  Then
$g(v(\varphi))\le v(\psi)$, and \eqref{eq:KGC1} gives
$v(\varphi)\le f(v(\psi))$.  Hence
$v(\varphi\to\triangledown\psi)=1$.  The same argument using
\eqref{eq:KGC2} proves soundness of $(\mathrm{GC}_2)$.

It remains to check necessitation.  Since $g(1)\le1$, condition
\eqref{eq:KGC1} yields $1\le f(1)$; hence $f(1)=1$.  Therefore
$v(\varphi)=1$ implies
$v(\triangledown\varphi)=f(1)=1$.  Modus ponens and substitution preserve
validity, so induction on derivations proves the result.
\end{proof}

\begin{lemma}\label{lem:FGC-Lindenbaum-congruence}
Let $T$ be a theory of $\mathbf{FL}^{\mathrm{FGC}}_{ew}$ and define
\[
 \varphi\equiv_T\psi
 \Longleftrightarrow
 T\vdash_{\mathbf{FL}^{\mathrm{FGC}}_{ew}}
   \varphi\Leftrightarrow\psi.
\]
Then $\equiv_T$ is a congruence on
$\mathrm{Fm}_{\mathcal L_{\mathrm{FGC}}}$.
\end{lemma}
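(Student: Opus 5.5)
The plan is to split the claim into two parts. First, $\equiv_T$ is an equivalence relation. Second, it is compatible with every connective of $\mathcal L_{\mathrm{FGC}}$.

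For the $\mathbf{FL}_{ew}$-connectives $\sqcap,\sqcup,\&,\Rightarrow$ (and trivially the constants), I will use the standard fact that $\mathbf{FL}_{ew}$ is algebraizable with equivalence formula $p\Leftrightarrow q$. Concretely, the following are theorems of $\mathbf{FL}_{ew}$ and hence of the expansion:
\begin{itemize}
\item reflexivity, $\varphi\Leftrightarrow\varphi$;
\item symmetry, $(\varphi\Leftrightarrow\psi)\Rightarrow(\psi\Leftrightarrow\varphi)$;
\item transitivity, derivable from $\varphi\Leftrightarrow\psi$ and $\psi\Leftrightarrow\chi$ via suffixing and MP;
\item the replacement laws for each binary connective, e.g.\ from $\varphi_1\Leftrightarrow\psi_1$ and $\varphi_2\Leftrightarrow\psi_2$ one derives $(\varphi_1\star\varphi_2)\Leftrightarrow(\psi_1\star\psi_2)$ for $\star\in\{\sqcap,\sqcup,\&,\Rightarrow\}$.
\end{itemize}
Since $T$ is closed under derivability and the calculus extends $\mathbf{FL}_{ew}$ by axioms and rules only, these derivations remain available over $T$. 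The one point to check is that $\vdash$ from $T$ uses only MP-style closure for these facts, which holds because they are derived from theorems by MP alone.

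For the new unary connectives, compatibility is exactly Proposition~\ref{prop:FGC-monotonicity}(2). If $T\vdash\varphi\Leftrightarrow\psi$, then $T\vdash\triangledown\varphi\Leftrightarrow\triangledown\psi$ and $T\vdash\blacktriangle\varphi\Leftrightarrow\blacktriangle\psi$. To use this, I first extract the two implications $\varphi\Rightarrow\psi$ and $\psi\Rightarrow\varphi$ from the conjunction via the $\mathbf{FL}_{ew}$-theorem $(\alpha\sqcap\beta)\Rightarrow\alpha$ and MP. I then reassemble the resulting implications into a $\sqcap$ via the adjunction rule, which is derivable in $\mathbf{FL}_{ew}$ from $\alpha\Rightarrow(\beta\Rightarrow(\alpha\sqcap\beta))$.

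The main obstacle is a subtle one. Proposition~\ref{prop:FGC-monotonicity} applies the Galois rules $(\mathrm{GC}_1),(\mathrm{GC}_2)$ to formulas derived from $T$, not just to theorems. That is legitimate in the paper's definition of derivation from $\Gamma$, since the rules may be applied to any members. I will therefore simply invoke the proposition as stated, noting that it is already established for arbitrary theories $T$.
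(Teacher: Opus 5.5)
Your proposal is correct and takes the same route as the paper. The equivalence-relation properties and compatibility with $\sqcap,\sqcup,\&,\Rightarrow$ and the constants are inherited from $\mathbf{FL}_{ew}$, and compatibility with $\triangledown$ and $\blacktriangle$ is exactly Proposition~\ref{prop:FGC-monotonicity}(2). You additionally spell out what the paper leaves implicit: adjunction is obtained via the $\mathbf{FL}_{ew}$-theorem $\alpha\Rightarrow(\beta\Rightarrow(\alpha\sqcap\beta))$, which holds thanks to weakening. You also note that the Galois rules may be applied to $T$-derived premises under the paper's definition of derivation, which is what makes that proposition available relative to $T$.
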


\begin{proof}
Reflexivity, symmetry, and transitivity follow from the standard properties of
$\Leftrightarrow$ in $\mathbf{FL}_{ew}$.  Compatibility with
$\sqcap,\sqcup,\&,\Rightarrow,\bot,\top$ is inherited from the congruentiality of $\mathbf{FL}_{ew}$.  Compatibility with $\triangledown$ and $\blacktriangle$ is exactly
Proposition~\ref{prop:FGC-monotonicity}(2).  Hence every operation
of $\mathcal L_{\mathrm{FGC}}$ preserves $\equiv_T$.
\end{proof}

\begin{lemma}\label{lem:FGC-canonical-algebra}
Let $T$ be a theory of $\mathbf{FL}^{\mathrm{FGC}}_{ew}$ and put
\[
 \mathscr{A}_T
   =\mathrm{Fm}_{\mathcal L_{\mathrm{FGC}}}/\!\equiv_T.
\]
Then $\mathscr{A}_T\in\mathbb K_{\mathrm{FGC}}$.  Moreover, if
$[\alpha]_T$ denotes the $\equiv_T$-class of $\alpha$, then
\begin{equation}
 [\alpha]_T\le[\beta]_T
 \Longleftrightarrow
 T\vdash_{\mathbf{FL}^{\mathrm{FGC}}_{ew}}\alpha\Rightarrow\beta.
 \label{eq:canonical-order}
\end{equation}
\end{lemma}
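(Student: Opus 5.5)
By Lemma~\ref{lem:FGC-Lindenbaum-congruence}, $\equiv_T$ is a congruence on $\mathrm{Fm}_{\mathcal L_{\mathrm{FGC}}}$. Hence $\mathscr{A}_T$ is a well-defined algebra of type $\mathcal L_{\mathrm{FGC}}$, with operations $[\alpha]_T\wedge[\beta]_T=[\alpha\sqcap\beta]_T$, $f([\alpha]_T)=[\triangledown\alpha]_T$, $g([\alpha]_T)=[\blacktriangle\alpha]_T$, and so on, and constants $0=[\bot]_T$, $1=[\top]_T$.

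\textbf{Step 1: the residuated-lattice reduct and \eqref{eq:canonical-order}.} Since $\mathbf{FL}^{\mathrm{FGC}}_{ew}$ contains all axioms and the rule (MP) of $\mathbf{FL}_{ew}$, the standard Lindenbaum--Tarski argument for $\mathbf{FL}_{ew}$ carries over verbatim to the reduct. So the $\{\sqcap,\sqcup,\&,\Rightarrow,\bot,\top\}$-reduct is a residuated lattice. I would then prove \eqref{eq:canonical-order} through the following chain:
\[
[\alpha]_T\le[\beta]_T \iff [\alpha\sqcap\beta]_T=[\alpha]_T \iff T\vdash(\alpha\sqcap\beta)\Leftrightarrow\alpha \iff T\vdash\alpha\Rightarrow\beta .
\]
The last equivalence uses only $\mathbf{FL}_{ew}$-theorems: $\alpha\sqcap\beta\Rightarrow\alpha$ holds, and $(\alpha\Rightarrow\beta)\Rightarrow(\alpha\Rightarrow\alpha\sqcap\beta)$ is obtained from $\alpha\Rightarrow\alpha$ together with $\sqcap$-introduction. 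A consequence I will use repeatedly is that $[\alpha]_T=1$ iff $T\vdash\alpha$, because $\top\Rightarrow\alpha$ and $\alpha$ are interderivable.

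\textbf{Step 2: the equations \eqref{eq:KFGC1}--\eqref{eq:KFGC4}.}
\begin{itemize}
\item For \eqref{eq:KFGC1}: the instance $\triangledown\alpha\Rightarrow\alpha$ of \eqref{ax:FGC1} is a theorem, so $T$ derives it, and \eqref{eq:canonical-order} gives $f([\alpha]_T)\le[\alpha]_T$.
\item For \eqref{eq:KFGC2}--\eqref{eq:KFGC4}: each axiom \eqref{ax:FGC2}--\eqref{ax:FGC4} is a biconditional $\sigma\Leftrightarrow\sigma'$ derivable from $T$. By the definition of $\equiv_T$ this says $[\sigma]_T=[\sigma']_T$, which is exactly the required identity once the operations are unfolded on representatives. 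For example, $f(g[\alpha]\to[\beta])=[\triangledown(\blacktriangle\alpha\Rightarrow\beta)]_T$.
\end{itemize}

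\textbf{Step 3: the quasi-equations \eqref{eq:KGC1} and \eqref{eq:KGC2}.} This is the only point needing care, since these come from rules rather than axioms. Suppose $g([\alpha]_T)\le[\beta]_T$. By \eqref{eq:canonical-order} this means $T\vdash\blacktriangle\alpha\Rightarrow\beta$. Appending one application of $(\mathrm{GC}_1)$ to that derivation yields $T\vdash\alpha\Rightarrow\triangledown\beta$, i.e.\ $[\alpha]_T\le f([\beta]_T)$. The converse direction \eqref{eq:KGC2} is proved symmetrically with $(\mathrm{GC}_2)$.

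The potential obstacle is that the rules are applied to formulas derived from the premises $T$, not only to theorems. This is legitimate because, by Definition~\ref{def:logic-FGC}, derivations from $\Gamma$ may apply all displayed rules to any earlier member. Proposition~\ref{prop:FGC-monotonicity} already relies on this same convention.

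Since the representatives were arbitrary, the equations and quasi-equations hold throughout $\mathscr{A}_T$. Hence $\mathscr{A}_T\in\mathbb K_{\mathrm{FGC}}$.
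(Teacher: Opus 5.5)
Your proposal is correct and follows essentially the same route as the paper's proof. Both use the Lindenbaum--Tarski argument for the $\mathbf{FL}_{ew}$-reduct and the order characterization \eqref{eq:canonical-order}, read \eqref{ax:FGC1}--\eqref{ax:FGC4} as the corresponding (in)equalities in the quotient, and obtain \eqref{eq:KGC1}--\eqref{eq:KGC2} by appending $(\mathrm{GC}_1)$ or $(\mathrm{GC}_2)$ to $T$-derivations; your explicit chain for \eqref{eq:canonical-order} and your remark that the Galois rules may be applied to formulas derived from premises only spell out what the paper calls ``standard.''
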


\begin{proof}
By Lemma~\ref{lem:FGC-Lindenbaum-congruence}, all operations are well defined on
the quotient.  Its reduct to the language of $\mathbf{FL}_{ew}$ is the usual
Lindenbaum--Tarski algebra of $T$ and therefore belongs to $\mathbb{RL}$.  The
standard description of the lattice order gives \eqref{eq:canonical-order}.

\eqref{ax:FGC1} gives
$[\triangledown\alpha]_T\le[\alpha]_T$, and
\eqref{ax:FGC2}--\eqref{ax:FGC4} yield, respectively,
\eqref{eq:KFGC2}--\eqref{eq:KFGC4} in the quotient.

Suppose that $[\blacktriangle\alpha]_T\le[\beta]_T$.  By
\eqref{eq:canonical-order},
$T\vdash_{\mathbf{FL}^{\mathrm{FGC}}_{ew}}
\blacktriangle\alpha\Rightarrow\beta$.  Rule $(\mathrm{GC}_1)$ gives
$T\vdash_{\mathbf{FL}^{\mathrm{FGC}}_{ew}}
\alpha\Rightarrow\triangledown\beta$, and hence
$[\alpha]_T\le[\triangledown\beta]_T$.  This proves
\eqref{eq:KGC1}.  

Conversely, if
$[\alpha]_T\le[\triangledown\beta]_T$, then
$T\vdash_{\mathbf{FL}^{\mathrm{FGC}}_{ew}}
\alpha\Rightarrow\triangledown\beta$.  Rule $(\mathrm{GC}_2)$ gives
$T\vdash_{\mathbf{FL}^{\mathrm{FGC}}_{ew}}
\blacktriangle\alpha\Rightarrow\beta$, whence
$[\blacktriangle\alpha]_T\le[\beta]_T$.  Thus \eqref{eq:KGC2} also holds.
Therefore $\mathscr{A}_T\in\mathbb K_{\mathrm{FGC}}$.
\end{proof}

\begin{theorem}\label{thm:FGC-completeness}
For every $\Gamma\cup\{\varphi\}\subseteq
\mathrm{Fm}_{\mathcal L_{\mathrm{FGC}}}$,
\[
 \Gamma\vdash_{\mathbf{FL}^{\mathrm{FGC}}_{ew}}\varphi
 \Longleftrightarrow
 \Gamma\models_{\mathbb K_{\mathrm{FGC}}}\varphi.
\]
\end{theorem}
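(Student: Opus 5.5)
The forward direction is Theorem~\ref{thm:FGC-soundness}, so only completeness needs proof. I would argue by contraposition through the canonical algebra of Lemma~\ref{lem:FGC-canonical-algebra}. Given $\Gamma$, set
\[
 T=\{\alpha\in\mathrm{Fm}_{\mathcal L_{\mathrm{FGC}}}\mid \Gamma\vdash_{\mathbf{FL}^{\mathrm{FGC}}_{ew}}\alpha\}.
\]
This $T$ is a theory, since derivations compose. Also $T\vdash\varphi$ iff $\Gamma\vdash\varphi$. Suppose $\Gamma\nvdash\varphi$; I will exhibit a countermodel in $\mathbb K_{\mathrm{FGC}}$.

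Take $\mathscr{A}_T=\mathrm{Fm}_{\mathcal L_{\mathrm{FGC}}}/\!\equiv_T$. By Lemma~\ref{lem:FGC-canonical-algebra}, $\mathscr{A}_T\in\mathbb K_{\mathrm{FGC}}$. Let $v$ be the canonical valuation $v(\alpha)=[\alpha]_T$, which is a homomorphism because it is the quotient map.

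The key step is the identity
\[
 [\alpha]_T=1 \iff T\vdash\alpha .
\]
Here $1=[\top]_T$. By \eqref{eq:canonical-order}, $[\alpha]_T=[\top]_T$ iff $T\vdash\alpha\Rightarrow\top$ and $T\vdash\top\Rightarrow\alpha$. The first is a theorem of $\mathbf{FL}_{ew}$. The second is interderivable with $\alpha$ in $\mathbf{FL}_{ew}$: from $\alpha$ we get $\top\Rightarrow\alpha$ by the axiom $\alpha\Rightarrow(\top\Rightarrow\alpha)$ and (MP); conversely, from $\top\Rightarrow\alpha$ we get $\alpha$ by (MP) with $\top$.

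With this identity, every $\gamma\in\Gamma\subseteq T$ satisfies $v(\gamma)=1$, while $v(\varphi)\neq1$ because $\varphi\notin T$. Hence $\Gamma\not\models_{\mathbb K_{\mathrm{FGC}}}\varphi$, which proves completeness.

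The main point requiring care is bookkeeping rather than depth. One must check that $T$ is closed under all the rules, including $(\mathrm{GC}_1)$, $(\mathrm{GC}_2)$ and $(\mathrm N_{\triangledown})$. This holds because the consequence relation is defined by derivations from $\Gamma$ and the rules may be applied to premises derived from $\Gamma$. That closure is what makes the congruence and order facts of Lemmas~\ref{lem:FGC-Lindenbaum-congruence} and~\ref{lem:FGC-canonical-algebra} applicable to this particular $T$.
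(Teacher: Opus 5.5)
Your proposal is correct and follows essentially the same route as the paper. You take $T=\mathrm{Cn}(\Gamma)$, pass to the canonical algebra $\mathscr{A}_T\in\mathbb K_{\mathrm{FGC}}$ of Lemma~\ref{lem:FGC-canonical-algebra} with the canonical valuation, and use $[\alpha]_T=[\top]_T \iff T\vdash\alpha$ to get $v(\gamma)=1$ for every $\gamma\in\Gamma$ and $v(\varphi)\neq1$. The only cosmetic difference is that you obtain this identity from the order characterization \eqref{eq:canonical-order} plus antisymmetry, while the paper argues directly with $\equiv_T$ and the interderivability of $\alpha$ with $\alpha\Leftrightarrow\top$.
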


\begin{proof}
The left-to-right implication is Theorem~\ref{thm:FGC-soundness}.  Conversely,
suppose that
$\Gamma\nvdash_{\mathbf{FL}^{\mathrm{FGC}}_{ew}}\varphi$ and let
\[
 T=\mathrm{Cn}_{\mathbf{FL}^{\mathrm{FGC}}_{ew}}(\Gamma)
   :=\{\psi:\Gamma\vdash_{\mathbf{FL}^{\mathrm{FGC}}_{ew}}\psi\}.
\]
Then $T$ is a theory, $\Gamma\subseteq T$, and $\varphi\notin T$.  By
Lemma~\ref{lem:FGC-canonical-algebra},
$\mathscr{A}_T\in\mathbb K_{\mathrm{FGC}}$.  Define the canonical valuation by
$v_T(p)=[p]_T$ for each propositional variable $p$.  Induction on formula
complexity gives
\[
 v_T(\alpha)=[\alpha]_T
 \qquad
 (\alpha\in\mathrm{Fm}_{\mathcal L_{\mathrm{FGC}}}).
\]

If $\gamma\in T$, then $T\vdash\gamma$.  Since in $\mathbf{FL}_{ew}$ a
formula is interderivable with its biconditional to $1$, it follows that
$T\vdash\gamma\Leftrightarrow1$, and hence $v_T(\gamma)=1$.  In particular,
$v_T(\gamma)=1$ for every $\gamma\in\Gamma$.

On the other hand, $v_T(\varphi)\ne1$.  Indeed, if
$[\varphi]_T=[1]_T$, then
$T\vdash\varphi\Leftrightarrow1$, so
$T\vdash1\Rightarrow\varphi$.  Modus ponens with the theorem $1$ would give
$T\vdash\varphi$, contrary to $\varphi\notin T$.  

Thus $\mathscr{A}_T$ and
$v_T$ form a countermodel, proving completeness.
\end{proof}

\begin{theorem}\label{thm:FGC-algebraizability}
The logic $\mathbf{FL}^{\mathrm{FGC}}_{ew}$ is algebraizable in the sense of
Blok and Pigozzi.  The maps $\tau$ and $\rho$ defined in
\eqref{eq:algebraizing-translations} form an algebraizing pair, and
$\mathbb K_{\mathrm{FGC}}$ is its equivalent algebraic semantics.
\end{theorem}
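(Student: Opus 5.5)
I will verify the Blok--Pigozzi conditions directly, using the completeness Theorem~\ref{thm:FGC-completeness} to replace $\vdash_{\mathbf{FL}^{\mathrm{FGC}}_{ew}}$ by $\models_{\mathbb K_{\mathrm{FGC}}}$ wherever convenient. Recall that a logic $\vdash$ is algebraizable with equivalent algebraic semantics $\mathbb K$ via $\tau,\rho$ iff
\begin{align*}
 &\Gamma\vdash\varphi \iff \tau(\Gamma)\models_{\mathbb K}\tau(\varphi),\\
 &s\approx t \;\mathrel{\Dashv\models}_{\mathbb K}\; \tau\rho(s\approx t).
\end{align*}
Here $\models_{\mathbb K}$ is the equational consequence relative to $\mathbb K$. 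Together these give the remaining conditions of the standard characterisation, namely the translation of $\models_{\mathbb K}$ back into $\vdash$ via $\rho$, and the condition $\varphi\dashv\vdash\rho\tau(\varphi)$.

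The first condition is exactly Theorem~\ref{thm:FGC-completeness}, once one notes that $\tau(\gamma)=\{\gamma\approx1\}$. Thus $\tau(\Gamma)\models_{\mathbb K_{\mathrm{FGC}}}\tau(\varphi)$ means that every valuation sending $\Gamma$ to $1$ sends $\varphi$ to $1$, which is the definition of $\Gamma\models_{\mathbb K_{\mathrm{FGC}}}\varphi$.

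For the second condition, $\tau\rho(s\approx t)$ is the equation $(s\to t)\wedge(t\to s)\approx1$. The equivalence of this equation with $s\approx t$ in every residuated lattice is the fact already invoked in the proof of Theorem~\ref{thm:FGC-soundness}, so it holds pointwise in every member of $\mathbb K_{\mathrm{FGC}}$. The new operations $f,g$ play no role at this step.

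For completeness of the argument I will also check $\varphi\dashv\vdash\varphi\Leftrightarrow\top$ syntactically. This interderivability already holds in $\mathbf{FL}_{ew}$, and hence in its expansion, and it was used in the proof of Theorem~\ref{thm:FGC-completeness}.

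A further step is to confirm that the class $\mathbb K_{\mathrm{FGC}}$ is the right kind of class. Blok--Pigozzi only require a quasivariety, and $\mathbb K_{\mathrm{FGC}}$ is axiomatised by equations together with the quasi-equations \eqref{eq:KGC1}--\eqref{eq:KGC2}, so it is a quasivariety. The identification with the \emph{variety} $\mathbb{FARL}$ can then be made by replacing \eqref{eq:KGC1}--\eqref{eq:KGC2} with equational unit/counit laws:
\begin{align*}
 &x\le f(g(x)), && g(f(x))\le x,\\
 &f(x\wedge y)=f(x)\wedge f(y), && g(x\vee y)=g(x)\vee g(y).
\end{align*}
Monotonicity follows from the last two laws, and the adjunction $g\dashv f$ follows from all four together.

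The main obstacle is this equational rewriting, and I expect it to be straightforward. The implication from the quasi-equations to the four laws follows the pattern of the proof of Proposition~\ref{prop:FGC-monotonicity}. Conversely, if $g(x)\le y$, then $x\le fg(x)\le f(y)$ by monotonicity of $f$. The dual argument gives the other direction.

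With these pieces in place, $\tau,\rho$ form an algebraizing pair, and $\mathbb K_{\mathrm{FGC}}$ is the equivalent algebraic semantics.
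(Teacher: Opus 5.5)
Your proposal is correct and takes essentially the same route as the paper: you get the consequence transfer from Theorem~\ref{thm:FGC-completeness}, equation recovery $s\approx t \iff (s\Leftrightarrow t)\approx 1$ from the residuated-lattice reduct, and the syntactic check $\varphi\dashv\vdash\varphi\Leftrightarrow\top$. The extra step identifying $\mathbb K_{\mathrm{FGC}}$ with an equationally defined class is not needed for this statement (the paper does it separately in Lemma~\ref{lem:FARL-equational-adjunction}), though your four-law basis is valid and slightly leaner than the paper's six conditions.
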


\begin{proof}
Theorem~\ref{thm:FGC-completeness} gives, for every
$\Gamma\cup\{\varphi\}\subseteq
\mathrm{Fm}_{\mathcal L_{\mathrm{FGC}}}$,
\[
 \Gamma\vdash_{\mathbf{FL}^{\mathrm{FGC}}_{ew}}\varphi
 \Longleftrightarrow
 \{\gamma\approx1:\gamma\in\Gamma\}
 \models_{\mathbb K_{\mathrm{FGC}}}\varphi\approx1.
\]

It remains to verify recovery under the two translations.

For every formula $\varphi$,
\begin{equation}
 \varphi
 \dashv\vdash_{\mathbf{FL}^{\mathrm{FGC}}_{ew}}
 \rho\tau(\varphi)
 =\varphi\Leftrightarrow1.
 \label{eq:formula-recovery}
\end{equation}
Indeed, $\varphi$ entails $1\Rightarrow\varphi$, while
$\varphi\Rightarrow1$ is a theorem of $\mathbf{FL}_{ew}$, and hence
$\varphi\vdash\varphi\Leftrightarrow 1$.  

Conversely,
$\varphi\Leftrightarrow1$ entails $1\Rightarrow\varphi$, and modus ponens with the
theorem $1$ yields $\varphi$.

For terms $s$ and $t$, every algebra in
$\mathbb K_{\mathrm{FGC}}$ has a residuated-lattice reduct, and hence
\begin{equation}
 s\approx t
 \Longleftrightarrow
 (s\Leftrightarrow t)\approx1.
 \label{eq:equation-recovery}
\end{equation}
The right-hand side is equivalent to the conjunction of $s\le t$ and
$t\le s$.  Thus every equation is equivalent over
$\mathbb K_{\mathrm{FGC}}$ to its round-trip translation
$\tau\rho(s\approx t)$.  Together with
Lemma~\ref{lem:FGC-Lindenbaum-congruence}, these facts are precisely the
Blok--Pigozzi algebraizability conditions.
\end{proof}

\subsection{The equivalent algebraic semantics of
$\mathbf{FL}^{\mathrm{FGC}}_{ew}$}
\label{subsection:FARL-semantics}

In this subsection, we introduce the variety $\FARL$ of Frobenius-adjoint residuated lattices and prove that is an equivalent algebraic semantics of $\mathbf{FL}^{\mathrm{FGC}}_{ew}$, giving an intrinsic description of $\mathbb K_{\mathrm{FGC}}$.  

\begin{definition}\label{def:FARL}
Let
$\mathbf{L}$ be a residuated lattice and
$f,g:L\to L$ be two unary operators.  The algebra
\[
 \mathscr{L}=
 (\mathbf{L},f,g) =(L,\wedge,\vee,\odot,\to,0,1,f,g)
\]
is called a \emph{Frobenius-adjoint residuated lattice} if, for all
$x,y\in L$, it satisfies
\begin{align}
 f(x)&\le x,
   \tag{FARL1}\label{FARL1}\\
 g(x)\le y&\Longleftrightarrow x\le f(y),
   \tag{FARL2}\label{FARL2}\\
 f(g(x)\to y)&=g(x)\to f(y),
   \tag{FARL3}\label{FARL3}\\
 f(g(x)\vee y)&=g(x)\vee f(y),
   \tag{FARL4}\label{FARL4}\\
 g(x\wedge g(y))&=g(x)\wedge g(y).
   \tag{FARL5}\label{FARL5}
\end{align}
The class of all Frobenius-adjoint residuated lattices is denoted by
$\FARL$.
\end{definition}

 \eqref{FARL2} states that $g$ is left adjoint to $f$, written
$g\dashv f$.  Accordingly, $g$ is called the \emph{Frobenius left adjoint operator} and
$f$ the \emph{Frobenius right adjoint operator} on the underlying residuated lattice.

\begin{remark}\label{rem:FARL-terminology}
The term \emph{Frobenius-adjoint} emphasizes four independent structural
features.  

(1)~~\eqref{FARL2} is the {\bf Galois adjunction}
\[
 g(x)\le y\Longleftrightarrow x\le f(y).
\]

(2)~~\eqref{FARL3} is the residual counterpart of the {\bf Frobenius-type reciprocity law}:
\[
 f(g(x)\to y)=g(x)\to f(y),
\]
and links the unary adjunction $g\dashv f$ with the residuated adjunction
$x\odot{-}\dashv x\to{-}$. 

(3)~~\eqref{FARL4} is a {\bf Frobenius-type reciprocity law for the join}:
\[
 f(g(x)\vee y)=g(x)\vee f(y).
\]
It says that an element in image of the left adjoint $g$ be moved
through the right adjoint $f$ w.r.t. $\vee$.  

(4)~~\eqref{FARL5} is a {\bf meet-form Frobenius law for the left adjoint $g$}:

\[g(x\wedge g(y))=g(x)\wedge g(y).
\]

Therefore, the name emphasizes both the adjoint relationship between $g$ and $f$ and their Frobenius-type interaction with the operations of the residuated lattice. 
\end{remark}

\begin{proposition}\label{prop:identify-KFGC-FARL}
The algebraic translation class $\mathbb K_{\mathrm{FGC}}$ coincides with the calss
$\FARL$.
\end{proposition}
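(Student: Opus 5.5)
The plan is a direct comparison of the two defining lists: both classes have the same signature $(L,\wedge,\vee,\odot,\to,f,g,0,1)$, and in each the residuated-lattice reduct is required to lie in $\mathbb{RL}$. It therefore suffices to show that, over a residuated lattice, the conditions \eqref{eq:KFGC1}--\eqref{eq:KGC2} are jointly equivalent to \eqref{FARL1}--\eqref{FARL5}. I will prove the two inclusions $\mathbb K_{\mathrm{FGC}}\subseteq\FARL$ and $\FARL\subseteq\mathbb K_{\mathrm{FGC}}$ separately.

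For $\mathbb K_{\mathrm{FGC}}\subseteq\FARL$, take $\mathscr{A}=(\mathbf{A},f,g)\in\mathbb K_{\mathrm{FGC}}$. Conditions \eqref{FARL1}, \eqref{FARL3}, \eqref{FARL4} and \eqref{FARL5} are literally \eqref{eq:KFGC1}, \eqref{eq:KFGC2}, \eqref{eq:KFGC3} and \eqref{eq:KFGC4}. For \eqref{FARL2}, the direction ``$\Rightarrow$'' is the quasi-equation \eqref{eq:KGC1} and the direction ``$\Leftarrow$'' is \eqref{eq:KGC2}. Their conjunction is exactly the biconditional $g(x)\le y\Longleftrightarrow x\le f(y)$ for all $x,y$.

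For $\FARL\subseteq\mathbb K_{\mathrm{FGC}}$, the same identifications are read backwards. From \eqref{FARL2} we obtain both implications \eqref{eq:KGC1} and \eqref{eq:KGC2}, and the remaining four conditions coincide verbatim. Nothing else needs to be checked, because membership in $\mathbb K_{\mathrm{FGC}}$ was defined only by the residuated-lattice reduct together with these six conditions.

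There is no genuine obstacle; the only point needing care is foundational. The notation ``$\FARL$'' and the subsection title speak of a variety, whereas \eqref{FARL2} is a biconditional of inequalities. I will add a remark, not needed for the set-theoretic identity itself, that \eqref{FARL2} is equivalent over $\mathbb{RL}$ to four equations:
\begin{center}
$x\le f(g(x))$, $\quad g(f(y))\le y$, $\quad f(x\wedge y)=f(x)\wedge f(y)$, $\quad g(x\vee y)=g(x)\vee g(y)$.
\end{center}
The unit and counit inequalities $x\le fg(x)$ and $gf(y)\le y$ come from \eqref{FARL2} applied to $g(x)\le g(x)$ and to $f(y)\le f(y)$. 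The preservation of meets by $f$ and of joins by $g$ are standard consequences of adjointness. Conversely, these four equations give monotonicity of $f$ and $g$ (from the meet and join laws), and then $g(x)\le y$ implies $x\le fg(x)\le f(y)$, and dually. Hence the common class is equationally definable, which justifies calling it a variety. Together with Theorem~\ref{thm:FGC-algebraizability}, this shows that $\FARL$ is the equivalent algebraic semantics of $\mathbf{FL}^{\mathrm{FGC}}_{ew}$.
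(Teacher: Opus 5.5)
Your proof is correct and matches the paper's: both inclusions come from pairing \eqref{eq:KFGC1}--\eqref{eq:KFGC4} with \eqref{FARL1}, \eqref{FARL3}--\eqref{FARL5} and reading \eqref{FARL2} as the conjunction of the quasi-equations \eqref{eq:KGC1} and \eqref{eq:KGC2}. Your side remark on equational definability is also sound and corresponds to the paper's Lemma~\ref{lem:FARL-equational-adjunction}, and you rightly observe that the conditions $f(1)=1$ and $g(0)=0$ listed there are not needed to recover the adjunction.
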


\begin{proof}
Let $\mathscr{A}\in\mathbb K_{\mathrm{FGC}}$.  Conditions
\eqref{eq:KGC1} and \eqref{eq:KGC2} together state that
\[
 g(x)\le y\Longleftrightarrow x\le f(y),
\]
which is \eqref{FARL2}.  Conditions
\eqref{eq:KFGC1}--\eqref{eq:KFGC4} are
\eqref{FARL1} and \eqref{FARL3}--\eqref{FARL5}.  Hence
$\mathscr{A}\in\FARL$.  

The converse follows by reading
\eqref{FARL2} in its two directions and observing that the remaining the conditions of $\FARL$
are precisely the defining conditions of
$\mathbb K_{\mathrm{FGC}}$.
\end{proof}

\begin{theorem}\label{thm:FARL-strong-completeness}
For every $\Gamma\cup\{\varphi\}\subseteq
\mathrm{Fm}_{\mathcal L_{\mathrm{FGC}}}$,
\[
 \Gamma\vdash_{\mathbf{FL}^{\mathrm{FGC}}_{ew}}\varphi
 \Longleftrightarrow
 \Gamma\models_{\FARL}\varphi.
\]
Consequently, $\FARL$ is the equivalent algebraic semantics of
$\mathbf{FL}^{\mathrm{FGC}}_{ew}$.
\end{theorem}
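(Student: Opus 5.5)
The plan is to combine the two results already available. Proposition~\ref{prop:identify-KFGC-FARL} identifies $\mathbb K_{\mathrm{FGC}}$ with $\FARL$ as classes of algebras. The consequence relation $\models_{\mathbb K}$ depends only on the class $\mathbb K$, so for all $\Gamma\cup\{\varphi\}$ we get $\Gamma\models_{\mathbb K_{\mathrm{FGC}}}\varphi$ if and only if $\Gamma\models_{\FARL}\varphi$. Theorem~\ref{thm:FGC-completeness} then gives the displayed equivalence $\Gamma\vdash_{\mathbf{FL}^{\mathrm{FGC}}_{ew}}\varphi\Leftrightarrow\Gamma\models_{\FARL}\varphi$ directly.

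For the claim that $\FARL$ is the equivalent algebraic semantics, I would invoke Theorem~\ref{thm:FGC-algebraizability}, which already establishes $\mathbb K_{\mathrm{FGC}}$ as the equivalent algebraic semantics via $\tau,\rho$. Substituting $\FARL$ for $\mathbb K_{\mathrm{FGC}}$ by Proposition~\ref{prop:identify-KFGC-FARL} transfers this statement.

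The only subtle point, and the main obstacle, is that Blok--Pigozzi's \emph{equivalent algebraic semantics} is usually required to be a quasivariety, and it is unique only in that sense. Its defining conditions must therefore be (quasi-)equational. \eqref{FARL1} and \eqref{FARL3}--\eqref{FARL5} are equations, since $x\le y$ abbreviates $x\wedge y\approx x$, and \eqref{FARL2} is a pair of quasi-equations, namely \eqref{eq:KGC1} and \eqref{eq:KGC2}. Hence $\FARL$ is at least a quasivariety, which suffices.

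To justify the name ``variety'', I would additionally show that \eqref{FARL2} is equivalent, modulo the rest, to the equations $x\le f(g(x))$, $g(f(y))\le y$ together with monotonicity of $f$ and $g$. Monotonicity is itself expressible equationally as $f(x\wedge y)\le f(y)$ and $g(x)\le g(x\vee y)$. This is the standard unit/counit characterization of a Galois connection: if $g(x)\le y$, then $x\le fg(x)\le f(y)$, and dually in the other direction. The converse direction, deriving monotonicity and the unit/counit inequalities from \eqref{FARL2}, is the argument already used in Proposition~\ref{prop:FGC-monotonicity}.
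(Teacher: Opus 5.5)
Your argument is correct and is essentially the paper's proof. The paper also derives the displayed equivalence from Theorem~\ref{thm:FGC-completeness} together with the identification $\mathbb K_{\mathrm{FGC}}=\FARL$ of Proposition~\ref{prop:identify-KFGC-FARL}. Your extra remarks on \eqref{FARL2} are correct but not needed for this theorem: its quasi-equational nature is one point, and its equational replacement by unit/counit plus monotonicity is another. The paper treats that matter separately in Lemma~\ref{lem:FARL-equational-adjunction}, using preservation of $\wedge$, $\vee$ and $f(1)=1$, $g(0)=0$ instead of your monotonicity inequalities.
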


\begin{proof}
It follows directly from Theorem~\ref{thm:FGC-completeness} and
Proposition~\ref{prop:identify-KFGC-FARL}.
\end{proof}

A formula is called \emph{FGC-free} if neither
$\triangledown$ nor $\blacktriangle$ occurs in it.

\begin{theorem}\label{thm:FGC-conservativity}
For every set $\Gamma$ of FGC-free formulas and FGC-free formula
$\varphi$,
\[
 \Gamma\vdash_{\mathbf{FL}^{\mathrm{FGC}}_{ew}}\varphi
 \Longleftrightarrow
 \Gamma\vdash_{\mathbf{FL}_{ew}}\varphi.
\]
\end{theorem}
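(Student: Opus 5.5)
The right-to-left direction is immediate. Every derivation in $\mathbf{FL}_{ew}$ is also a derivation in $\mathbf{FL}^{\mathrm{FGC}}_{ew}$, because the latter only adds axiom schemes and rules. For the left-to-right direction I will argue semantically, using Theorem~\ref{thm:FARL-strong-completeness} together with the standard strong completeness of $\mathbf{FL}_{ew}$ with respect to $\mathbb{RL}$.

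Suppose $\Gamma\nvdash_{\mathbf{FL}_{ew}}\varphi$ with $\Gamma\cup\{\varphi\}$ FGC-free. By completeness of $\mathbf{FL}_{ew}$ there are a residuated lattice $\mathbf{L}$ and a valuation $v$ with $v(\gamma)=1$ for all $\gamma\in\Gamma$ and $v(\varphi)\neq1$. The key step is to expand $\mathbf{L}$ to a member of $\FARL$ without changing its residuated-lattice reduct. I take $f=g=\mathrm{Id}_L$ and check each axiom.

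\eqref{FARL1}: $x\le x$ holds trivially.

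\eqref{FARL2}: $x\le y\iff x\le y$ holds trivially.

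\eqref{FARL3}: both sides equal $x\to y$.

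\eqref{FARL4}: both sides equal $x\vee y$.

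\eqref{FARL5}: both sides equal $x\wedge y$.

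Thus $(\mathbf{L},\mathrm{Id},\mathrm{Id})\in\FARL$.

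Extend $v$ to all of $\mathrm{Fm}_{\mathcal L_{\mathrm{FGC}}}$ by interpreting $\triangledown$ and $\blacktriangle$ as the identity. On FGC-free formulas the extended valuation agrees with $v$, since evaluating such a formula uses only the residuated-lattice operations. So we have a countermodel in $\FARL$, which gives $\Gamma\not\models_{\FARL}\varphi$. By Theorem~\ref{thm:FARL-strong-completeness}, $\Gamma\nvdash_{\mathbf{FL}^{\mathrm{FGC}}_{ew}}\varphi$.

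There is no real obstacle here. The only point that needs care is making sure that some expansion exists for every residuated lattice, so that the reduct class of $\FARL$ is all of $\mathbb{RL}$. The identity expansion settles this. (The pair $f=\mathrm{Id}$, $g=\mathrm{Id}$ is forced if one wants $f$ inflationary as well, but only \eqref{FARL1}--\eqref{FARL5} are required.) If a syntactic argument were preferred instead, one could define a translation erasing $\triangledown$ and $\blacktriangle$. Such a translation sends each new axiom to an $\mathbf{FL}_{ew}$-theorem and each new rule to a trivial or admissible step, and it fixes FGC-free formulas. The semantic route above is shorter.
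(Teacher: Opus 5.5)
Your proposal is correct and follows essentially the same route as the paper. Both arguments take a residuated-lattice countermodel for $\mathbf{FL}_{ew}$, expand it by $f=g=\mathrm{id}_L$ to a member of $\FARL$ without changing the values of FGC-free formulas, and conclude via Theorem~\ref{thm:FARL-strong-completeness}; the paper simply calls the axiom checks for \eqref{FARL1}--\eqref{FARL5} ``immediate''.
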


\begin{proof}
The right-to-left implication holds because
$\mathbf{FL}^{\mathrm{FGC}}_{ew}$ extends $\mathbf{FL}_{ew}$.  Conversely,
suppose that $\Gamma\nvdash_{\mathbf{FL}_{ew}}\varphi$.  By strong completeness
of $\mathbf{FL}_{ew}$ with respect to residuated lattices, there are a
residuated lattice $L$ and a valuation $v$ such that
$v(\gamma)=1$ for all $\gamma\in\Gamma$, whereas $v(\varphi)\ne1$.
Expand $L$ by
\[
 f=\operatorname{id}_L,
 \qquad
 g=\operatorname{id}_L.
\]
\eqref{FARL1}--\eqref{FARL5} are immediate, so the expansion
belongs to $\FARL$.  Since all formulas are
FGC-free, their values are unchanged.  Therefore
$\Gamma\not\models_{\FARL}\varphi$, and
Theorem~\ref{thm:FARL-strong-completeness} gives
$\Gamma\nvdash_{\mathbf{FL}^{\mathrm{FGC}}_{ew}}\varphi$.
\end{proof}

Some basic algebraic properties of Frobenius-adjoint residuated lattices are listed. 
The proof of these properties is left as an exercise for the reader.

\begin{proposition}\label{prop:FARL-basic-properties}
Let $(\mathbf{L},f,g)\in\FARL$. For all $x,y\in L$, the following
properties hold:

\FARLtag{6}~~$x\leq f(g(x))$ and $g(f(x))\leq x$,

\FARLtag{7}~~$f(0)=0$,

\FARLtag{8}~~$x\leq g(x)$,

\FARLtag{9}~~$g(1)=1$,~$f(1)=1$,~$g(0)=0$,

\FARLtag{10}~~$f(g(x))=g(x)$,

\FARLtag{11}~~$x\leq y$ implies $f(x)\leq f(y)$ and
$g(x)\leq g(y)$,

\FARLtag{12}~~$g(f(x))=f(x)$,

\FARLtag{13}~~$f(f(x))=f(x)$ and $g(g(x))=g(x)$,

\FARLtag{14}~~$f(f(x)\vee y)=f(x)\vee f(y)$,

\FARLtag{15}~~$f(x\wedge y)=f(x)\wedge f(y)$ and
$g(x\vee y)=g(x)\vee g(y)$,

\FARLtag{16}~~$g(x\odot f(y))=g(x)\odot f(y)$,

\FARLtag{17}~~$f(f(x)\to y)=f(x)\to f(y)$,

\FARLtag{18}~~$f(x\to f(y))=g(x)\to f(y)$,

\FARLtag{19}~~$f(x\to g(y))=g(x)\to g(y)$,

\FARLtag{20}~~$f(x\to y)\leq f(x)\to f(y)$,

\FARLtag{21}~~$f(f(x)\to f(y))=f(x)\to f(y)$,

\FARLtag{22}~~$f(\neg f(x))=\neg f(x)$,

\FARLtag{23}~~$g(\neg g(x))=\neg g(x)$,

\FARLtag{24}~~$f(\neg x)=\neg g(x)$,

\FARLtag{25}~~$g(g(x)\odot g(y))=g(x)\odot g(y)$,

\FARLtag{26}~~$g(f(x)\odot f(y))=f(x)\odot f(y)$,

\FARLtag{27}~~$f(f(x)\odot f(y))=f(x)\odot f(y)$,

\FARLtag{28}~~$f(x)\leq y\Longleftrightarrow f(x)\leq f(y)$,

\FARLtag{29}~~$f(x\odot y)\geq f(x)\odot f(y)$.\\
If $L$ is involutive, then

\FARLtag{30}~~$\neg f(x)=g(\neg x)$.
\end{proposition}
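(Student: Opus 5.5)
The plan is to first establish a few structural facts about the pair $g\dashv f$ and then derive almost all the listed identities from one organizing observation: $f$ and $g$ have the same set of fixed points
\[
M=\{x\in L\mid f(x)=x\}=\{x\in L\mid g(x)=x\}.
\]
On this set $f$ acts as an interior operator and $g$ as a closure operator.

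\textbf{Adjunction facts.} From \eqref{FARL2} I will read off the following.
\begin{itemize}
\item The unit and counit give \ref{FARL6}.
\item Monotonicity \ref{FARL11} is standard for adjoints.
\item Left and right adjoints preserve joins and meets respectively, which gives \ref{FARL15} and $g(0)=0$.
\item $g(1)\le 1$ yields $f(1)=1$, and \eqref{FARL1} gives $f(0)=0$, i.e.\ \ref{FARL7}.
\item For \ref{FARL8}, combine \ref{FARL6} with \eqref{FARL1}: $x\le f(g(x))\le g(x)$. Then $g(1)=1$, completing \ref{FARL9}.
\end{itemize}

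\textbf{The fixed-point set $M$.} Putting $y=0$ in \eqref{FARL4} and using \ref{FARL7} gives \ref{FARL10}, $f(g(x))=g(x)$.

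For \ref{FARL12}, the inequality $f(x)\le g(f(x))$ is \ref{FARL8}. Conversely, $g(f(x))=f(g(f(x)))\le f(x)$, using \ref{FARL10}, the counit $g(f(x))\le x$, and monotonicity.

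Idempotence \ref{FARL13} follows by composing \ref{FARL10} and \ref{FARL12}. Hence $f$ and $g$ both have image $M$ and fix it pointwise, so for $m\in M$:
\[
m\le x \iff m\le f(x), \qquad x\le m \iff g(x)\le m.
\]
The first equivalence is exactly \ref{FARL28}. Since every $f(x)$ has the form $g(f(x))$, the laws \eqref{FARL3} and \eqref{FARL4} immediately give \ref{FARL17} and \ref{FARL14}.

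Restated, \eqref{FARL3} says that $m\to f(y)\in M$ whenever $m\in M$. Consequently:
\begin{itemize}
\item \ref{FARL21} follows directly.
\item Since $0=f(0)\in M$, also \ref{FARL22} and \ref{FARL23} follow.
\end{itemize}

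\textbf{The residuation identities.} The main obstacle is \ref{FARL16} and \ref{FARL18}, which need \eqref{FARL3} genuinely together with residuation. For \ref{FARL16} I will chain equivalences for arbitrary $z$:
\[
g(x\odot f(y))\le z \iff x\le f(y)\to f(z)=f(f(y)\to z) \iff g(x)\le f(y)\to z \iff g(x)\odot f(y)\le z,
\]
where the middle equality uses \ref{FARL17}.

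For \ref{FARL18}, the element $g(x)\to f(y)$ lies in $M$ and is $\le x\to f(y)$ by \ref{FARL8}, hence $\le f(x\to f(y))$. Conversely, put $w=f(x\to f(y))\in M$. Then $w\to f(y)\in M$ and $x\le w\to f(y)$, so $g(x)\le w\to f(y)$, i.e.\ $w\le g(x)\to f(y)$. The same argument with $g(y)=f(g(y))$ gives \ref{FARL19}, and \ref{FARL24} is \ref{FARL18} with $y=0$.

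\textbf{Products and the remaining items.} By \ref{FARL16}, $g(f(x)\odot f(y))=g(f(x))\odot f(y)=f(x)\odot f(y)$, so $M$ is closed under $\odot$. This yields \ref{FARL25}--\ref{FARL27}, since $g(x),f(x)\in M$.

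For \ref{FARL29}, the element $f(x)\odot f(y)$ lies in $M$ and is below $x\odot y$, so it is below $f(x\odot y)$.

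For \ref{FARL20}, $f(x\to y)\odot f(x)\in M$ and is $\le(x\to y)\odot x\le y$, hence $\le f(y)$; residuating gives the claim.

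Finally, for \ref{FARL30} under (INV), apply \ref{FARL24} to $\neg x$ to get $f(x)=f(\neg\neg x)=\neg g(\neg x)$. Negating both sides and using $\neg\neg g(\neg x)=g(\neg x)$ gives $\neg f(x)=g(\neg x)$.
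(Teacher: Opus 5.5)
Your proposal is correct and complete. The paper gives no argument for this proposition (it is explicitly ``left as an exercise for the reader''), so there is no proof by the authors to compare against; your proposal supplies the missing derivation.

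I checked the dependency order, and it is non-circular:
\begin{itemize}
\item (FARL6), (FARL7), (FARL8), (FARL9), (FARL11) and (FARL15) use only the adjunction and (FARL1).
\item (FARL10) comes from (FARL4) at $y=0$.
\item (FARL12), (FARL13) and the description of the common image $M$ of $f$ and $g$ follow from these.
\item Two closure facts then do the remaining work: $m\to f(y)\in M$ for $m\in M$, from (FARL3) and idempotence; and $M\odot M\subseteq M$, from (FARL16).
\item With these, (FARL20)--(FARL27) and (FARL29) reduce to the transfer rules $m\le x\iff m\le f(x)$ and $x\le m\iff g(x)\le m$.
\end{itemize}
The chain of equivalences giving (FARL16) through (FARL17) is valid. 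So is the two-sided argument for (FARL18) with $w=f(x\to f(y))$. The derivation of (FARL30) from (FARL24) under (INV) is also fine.

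Two small remarks. First, your derivation never invokes (FARL5). Hence every item (FARL6)--(FARL30) already holds whenever $f,g$ satisfy (FARL1)--(FARL4) on a residuated lattice. It is worth saying this explicitly, since it shows that (FARL5) plays no role in these basic properties.

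Second, the phrase ``on this set $f$ acts as an interior operator and $g$ as a closure operator'' is slightly off, because on $M$ both maps are the identity. What you mean, and what you actually use, is that $f$ is an interior operator and $g$ a closure operator on $L$, both with image $M$.
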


The order-theoretic formulation of \eqref{FARL2} can be replaced by finitely many equations. This is
important both for algebraizability and for the finite model problem.

\begin{lemma}\label{lem:FARL-equational-adjunction} The class $\FARL$ is a finitely based variety. More precisely,
let $L$ be a bounded lattice and let $f,g:L\to L$.  Then the adjunction law
\[
 g(x)\le y\Longleftrightarrow x\le f(y)
\]
is equivalent to the following six conditions:

(A1)~ $x\le f(g(x))$,

 (A2)~$g(f(x))\le x$, 
 
 (A3)~$f(x\wedge y)=f(x)\wedge f(y)$, 
 
 (A4)~$f(1)=1$, 
 
 (A5)~$g(x\vee y)=g(x)\vee g(y)$, 
 
 (A6)~$g(0)=0$. 
\end{lemma}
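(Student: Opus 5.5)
We prove the two directions of the equivalence separately and then read off the variety claim.

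\emph{Adjunction implies (A1)--(A6).} Assume $g(x)\le y\Leftrightarrow x\le f(y)$ for all $x,y$.
Taking $y=g(x)$ in the right-to-left reading gives (A1). Taking $x=f(y)$ in the left-to-right reading gives (A2).
Monotonicity of $f$ follows: if $x\le y$, then $g(f(x))\le x\le y$, so $f(x)\le f(y)$. Dually, $g$ is monotone.
For (A3), monotonicity gives $f(x\wedge y)\le f(x)\wedge f(y)$. Conversely, put $z=f(x)\wedge f(y)$. Then $g(z)\le g(f(x))\le x$ by (A2), and likewise $g(z)\le y$. So $g(z)\le x\wedge y$, and adjointness gives $z\le f(x\wedge y)$.
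For (A4), $g(1)\le 1$ gives $1\le f(1)$.
(A5) and (A6) are the order duals: $g(x)\vee g(y)\le g(x\vee y)$ by monotonicity. For the reverse, $x\le f(g(x))\le f(g(x)\vee g(y))$ and similarly for $y$, so $x\vee y\le f(g(x)\vee g(y))$, whence $g(x\vee y)\le g(x)\vee g(y)$. Finally $0\le f(0)$ gives $g(0)\le 0$.

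\emph{(A1)--(A6) imply the adjunction.} By (A3), $f$ preserves binary meets. Hence $f$ is monotone: if $x\le y$, then $f(x)=f(x\wedge y)=f(x)\wedge f(y)\le f(y)$. Likewise (A5) makes $g$ monotone.
If $g(x)\le y$, then $x\le f(g(x))\le f(y)$ by (A1) and monotonicity of $f$.
If $x\le f(y)$, then $g(x)\le g(f(y))\le y$ by monotonicity of $g$ and (A2).
(A4) and (A6) are not needed here; they are consequences recorded for the equational presentation.

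\emph{Finite basis.} (A1) and (A2) are inequalities, hence equations of the form $x\wedge f(g(x))=x$ and $g(f(x))\vee x=x$. (A3)--(A6) are equations outright.
In Definition~\ref{def:FARL}, the quasi-equation \eqref{FARL2} can therefore be replaced by (A1)--(A6). The remaining conditions are of two kinds. \eqref{FARL1} is an inequality, equivalent to $f(x)\wedge x=f(x)$. \eqref{FARL3}--\eqref{FARL5} are equations. The residuated-lattice axioms are finitely many equations, because $\mathbb{RL}$ is a finitely based variety.
So $\FARL$ is axiomatized by finitely many equations, and by Birkhoff's theorem it is a variety.

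The only step requiring care is deriving monotonicity from the equational conditions alone, using (A3) and (A5). Everything else is routine.
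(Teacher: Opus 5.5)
Your proof is correct and follows the paper's argument: the converse direction (monotonicity from (A3) and (A5), then unit and counit) and the finite-basis step match the paper, while in the forward direction you derive (A3)--(A6) by hand where the paper simply cites that right adjoints preserve finite meets and left adjoints preserve finite joins. One cosmetic slip: (A1) comes from the left-to-right reading of the adjunction with $y=g(x)$, and (A2) from the right-to-left reading with $x=f(y)$, so your direction labels are swapped, though the substitutions themselves are right.
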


\begin{proof}
If $g\dashv f$, then (A1) and (A2) are the unit and counit, while (A3)--(A6)
follow from the fact that right adjoints preserve finite meets and left adjoints
preserve finite joins.

Conversely, assume (A1)--(A6).  Conditions (A3) and (A5) imply that $f$ and
$g$ are monotone.  If $g(x)\le y$, then monotonicity of $f$ and (A1) give
$x\le f(g(x))\le f(y)$.
On the other hand, if $x\le f(y)$, then monotonicity of $g$ and (A2) give
 $g(x)\le g(f(y))\le y$. Thus $g(x)\le y$ if and only if $x\le f(y)$.

(A1), (A2) and \eqref{FARL1} are equations in the lattice
language, for example
$x\wedge f(g(x))=x$ and $g(f(x))\wedge x=g(f(x))$.  Hence
\eqref{FARL1}, \eqref{FARL3}--\eqref{FARL5}, together with the equational
forms of (A1)--(A6), constitute a finite equational basis for
$\FARL$, which is a finitely based variety.
\end{proof}

The definition of Frobenius-adjoint residuated lattices differs from that of monadic residuated lattices, which provide an algebraic semantics of monadic substructural predicate logic $\mathbf{mFL_{ew}\forall}$.

\begin{definition}\label{def:MRL}
Let
$\mathbf{L}$ be a residuated lattice and
$\forall,\exists:L\to L$ be two unary operators.  The algebra
\[(\mathbf{L},\forall,\exists)=(L,\wedge,\vee,\odot,\to,\forall,\exists,0,1)
\]
is called a \emph{monadic residuated lattice} if, for all
$x,y\in L$, it satisfies
\begin{align}
 \forall(x)\rightarrow x&\le1,
   \tag{Mon1}\label{Mon1}\\
 \forall(x\to \forall y)&=\exists x\to \forall y,
   \tag{Mon2}\label{Mon2}\\
 \forall(\forall x\to y)&=\forall x\to \forall y,
   \tag{Mon3}\label{Mon3}\\
 \forall(x\vee \exists y)&=\forall x\vee \exists y,
   \tag{Mon4}\label{Mon4}\\
\exists(x\odot x)&=\exists x\odot \exists x.
   \tag{Mon5}\label{Mon5}
\end{align}
The class of all monadic residuated lattices is denoted by
$\MRL$.
\end{definition}

\begin{example}
Let
\[
L=\{0,a,b,z,m,1\}
\]
be the lattice whose order is determined by
\[
0<a<z<m<1,
\qquad
0<b<z<m<1,
\]
with \(a\) and \(b\) incomparable, where
\[
a\wedge b=0,
\qquad
a\vee b=z.
\]
Define the monoidal product by
\[
x\odot y=
\begin{cases}
y, & x=1,\\
x, & y=1,\\
0, & x<1\ \text{and}\ y<1,
\end{cases}
\]
and define the residual operation by
\[
x\to y=
\begin{cases}
y, & x=1,\\
1, & x<1\ \text{and}\ x\leq y,\\
m, & x<1\ \text{and}\ x\nleq y.
\end{cases}
\]
A direct verification shows that
$\mathbf{L}=(L,\odot,\vee,\wedge,\to,0,1)$ is a residuated lattice. 

Define the unary operations \(f,g\colon L\to L\) by
\[
\begin{array}{c|cccccc}
x      &0&a&b&z&m&1\\
\midrule
\forall x   &0&a&0&a&m&1\\
\exists x   &0&a&m&m&m&1
\end{array}
\]
Then $(\mathbf{L},\forall,\exists) \in \MRL$, but $(\mathbf{L},\forall,\exists)\notin\FARL$, since
\eqref{FARL5} fails. In particular,

\[
\exists\bigl(b\wedge \exists a\bigr)
=
0
\neq
a
=
\exists b\wedge \exists a,
\]
which implies
\[
\MRL\nsubseteq \FARL.
\]
\end{example}

\begin{example}\label{exm:FARL2}
Consider the three-element \L{}ukasiewicz chain
\[
\L_{3}
=
\left\{0,\frac12,1\right\},
\]
with operations
\[
x\odot y=\max\{0,x+y-1\},
\qquad
x\to y=\min\{1,1-x+y\}.
\]
Define \(f,g\colon L_{3}\to L_{3}\) by
\[
f(x)=
\begin{cases}
1, & x=1,\\
0, & x<1,
\end{cases}
\qquad
g(x)=
\begin{cases}
0, & x=0,\\
1, & x>0.
\end{cases}
\]
Then $(\mathbf{L_{3}},f,g)\in\FARL$, but $(\mathbf{L_{3}},f,g)\notin\MRL$ since \eqref{Mon5} fails. In particular,
\[
g\left(\frac12\odot\frac12\right)
=
0\neq 1=g\left(\frac12\right)\odot g\left(\frac12\right),
\]
which implies
\[
\FARL\nsubseteq \MRL.
\]
\end{example}

\subsection{The finite model property of $\mathbf{FL}^{\mathrm{FGC}}_{ew}$}\label{subsection:FMP-FGC}

In this subsection, we study the finite model property of $\mathbf{FL}^{\mathrm{FGC}}_{ew}$, and characterize this property through finite $\FARL$-algebras and residual finiteness, establishing the strong finite model property for the FGC-free fragment.

\begin{definition}\label{def:FMP-FGC}
The logic $\mathbf{FL}^{\mathrm{FGC}}_{ew}$ has the \emph{finite model
property} (FMP) if, for every formula $\varphi$,
\[
 \nvdash_{\mathbf{FL}^{\mathrm{FGC}}_{ew}}\varphi
\]
implies that there are a finite algebra $\mathscr{L}\in\FARL$ and a
valuation $v$ such that $v(\varphi)\ne1$.

It has the \emph{strong finite model property} (SFMP) if, whenever $\Gamma$ is
finite and
\[
 \Gamma\nvdash_{\mathbf{FL}^{\mathrm{FGC}}_{ew}}\varphi,
\]
there are a finite $\mathscr{L}\in\FARL$ and a valuation $v$ such that
\[
 v(\gamma)=1\quad(\gamma\in\Gamma),
 \qquad
 v(\varphi)\ne1.
\]

We denote the class of finite members of $\FARL$ by
$\FARL_{\mathrm{fin}}$.
\end{definition}

Here we present an example w.r.t. the finite model property of the logic $\mathbf{FL}^{\mathrm{FGC}}_{ew}$.
\begin{example}\label{example:nontrivial-finite-countermodel}
Let
\[
\L_3=\left\{0,\frac{1}{2},1\right\}
\]
be the three-element \L ukasiewicz chain, and consider the
direct product
\[
L=\L_3\times\L_3,
\]
with all operations defined coordinatewise. Define
\(f,g:L\rightarrow L\) by
\[
f(x_1,x_2)
=
(x_1\wedge x_2,x_1\wedge x_2),~~~
g(x_1,x_2)
=
(x_1\vee x_2,x_1\vee x_2).
\]
Then
\[
\mathscr{L}=(\mathbf{L},f,g)\in\FARL.
\]

Define a valuation $v:\mathrm{Fm}_{\mathcal L_{\mathrm{FGC}}}\to\mathscr{L}$ by
\[
v(p)=(1,0),
\qquad
v(q)=(0,1).
\]
Then
\[
v(\blacktriangle p)
=
v(\blacktriangle q)
=
(1,1),
\]
whereas
\[
v\bigl(\blacktriangle(p\sqcap q)\bigr)
=
g(0,0)
=
(0,0).
\]
Therefore,
\[
v\left(
(\blacktriangle p\sqcap\blacktriangle q)
\Rightarrow
\blacktriangle(p\sqcap q)
\right)
=
(0,0)\neq(1,1).
\]
Thus $\mathscr{L}$ is a nontrivial finite countermodel to 
\[
(\blacktriangle p\sqcap\blacktriangle q)
\Rightarrow
\blacktriangle(p\sqcap q).
\]

Moreover, it is easy to verify that if
\[
\Gamma=\{\blacktriangle p,\blacktriangle q\}
\quad\text{and}\quad
\varphi=\blacktriangle(p\sqcap q),
\]
then
\[
v(\gamma)=1
\quad\text{for every }\gamma\in\Gamma,
\qquad
v(\varphi)\neq1.
\]
Hence this algebra also provides a countermodel
in the sense of the strong finite model property.
\end{example}

For a class $\mathcal K$ of similar algebras, $\mathbb V(\mathcal K)$ denotes
the variety generated by $\mathcal K$.  An algebra is \emph{residually finite}
if two distinct elements can be separated by a homomorphism into a finite
algebra.

\begin{theorem}\label{thm:FMP-characterization}
The following three conditions are equivalent:

  (1)~$\mathbf{FL}^{\mathrm{FGC}}_{ew}$ has the finite model property,
  
  (2)~$\FARL=\mathbb V(\FARL_{\mathrm{fin}})$,
  
  (3)~every finitely generated free $\FARL$-algebra is residually
 finite.

\end{theorem}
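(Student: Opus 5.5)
We prove the cycle (1)$\Rightarrow$(2)$\Rightarrow$(3)$\Rightarrow$(1). The tools are Theorem~\ref{thm:FARL-strong-completeness}, which makes $\FARL$ the equivalent algebraic semantics via $\tau(\varphi)=\{\varphi\approx 1\}$, and Lemma~\ref{lem:FARL-equational-adjunction}, which makes $\FARL$ a variety. Every equation $s\approx t$ in the language of $\FARL$ is equivalent over $\FARL$ to $(s\Leftrightarrow t)\approx 1$ (see \eqref{eq:equation-recovery}), and $\FARL$-terms are identified with $\mathcal L_{\mathrm{FGC}}$-formulas.

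For (1)$\Rightarrow$(2), the inclusion $\mathbb V(\FARL_{\mathrm{fin}})\subseteq\FARL$ holds because $\FARL$ is a variety. For the converse it suffices to show that every equation valid in $\FARL_{\mathrm{fin}}$ is valid in $\FARL$. So suppose $s\approx t$ fails in some member of $\FARL$. Then $(s\Leftrightarrow t)\approx 1$ fails there as well, so $\nvdash_{\mathbf{FL}^{\mathrm{FGC}}_{ew}} s\Leftrightarrow t$ by soundness. The FMP then gives a finite $\mathscr L\in\FARL$ and a valuation $v$ with $v(s\Leftrightarrow t)\neq 1$, that is, $s\approx t$ fails in a finite member.

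For (2)$\Rightarrow$(3), let $\mathbf F_n$ be the free $\FARL$-algebra on $n$ generators and let $[s]\neq[t]$ in $\mathbf F_n$. Then $s\approx t$ fails in $\FARL=\mathbb V(\FARL_{\mathrm{fin}})$, hence fails in some finite $\mathscr L\in\FARL_{\mathrm{fin}}$ under some assignment of the $n$ variables. By freeness, this assignment extends to a homomorphism $h:\mathbf F_n\to\mathscr L$ with $h[s]\neq h[t]$. Since $\mathscr L$ is finite, this is the required separation.

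For (3)$\Rightarrow$(1), suppose $\nvdash\varphi$, where $\varphi$ has variables among $p_1,\dots,p_n$. By Theorem~\ref{thm:FARL-strong-completeness}, $\varphi\approx 1$ fails in $\FARL$. Hence in $\mathbf F_n$ we have $[\varphi]\neq[1]$, because an equation among $n$-variable terms holds in $\mathbf F_n$ iff it holds throughout the variety. Residual finiteness yields a homomorphism $h:\mathbf F_n\to\mathscr L$ into a finite algebra with $h[\varphi]\neq h[1]=1$. The valuation $v(p_i)=h[p_i]$ then satisfies $v(\varphi)\neq 1$.

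The one delicate point is ensuring that the finite target $\mathscr L$ lies in $\FARL$. The definition of residual finiteness quoted in the paper only asks for ``a finite algebra''. We read it as a finite algebra of the same type that is a homomorphic image, which may be taken to be the image $h(\mathbf F_n)$, a subalgebra of the target. As a homomorphic image of $\mathbf F_n$ it lies in the variety $\FARL$, again by Lemma~\ref{lem:FARL-equational-adjunction}. Without the equational basis this closure step would fail, since \eqref{FARL2} is a priori only quasi-equational. Once this is settled, the rest is routine universal algebra.
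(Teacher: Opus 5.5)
Your proof is correct and follows essentially the paper's route. The paper proves (1)$\Leftrightarrow$(2) and (2)$\Leftrightarrow$(3) rather than a cycle, so your direct (3)$\Rightarrow$(1) merges its (3)$\Rightarrow$(2) and (2)$\Rightarrow$(1) without a second appeal to Birkhoff's theorem. Your step of replacing the separating finite algebra by the image $h(\mathbf F_n)$, which lies in $\FARL$ because $\FARL$ is a variety, fills a point the paper skips: it simply assumes the target is in $\FARL$.
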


\begin{proof}
Assume (1), and let $s\approx t$ be an identity that fails in
$\FARL$.  Then there are $\mathscr{L}\in\FARL$ and a valuation
$v$ such that $v(s)\ne v(t)$.  In an integral residuated lattice,
\[
 v(s)=v(t)
 \Longleftrightarrow
 v(s\Leftrightarrow t)=1.
\]
Therefore $s\Leftrightarrow t$ is not valid in $\FARL$ and, by
Theorem~\ref{thm:FARL-strong-completeness}, is not a theorem of
$\mathbf{FL}^{\mathrm{FGC}}_{ew}$.  The FMP yields a finite
$\mathscr{B}\in\FARL$ and a valuation $w$ such that
$w(s\Leftrightarrow t)\ne1$, so $w(s)\ne w(t)$.  Thus every identity failing
in $\FARL$ already fails in a finite member.  By Birkhoff's theorem,
$\FARL=\mathbb V(\FARL_{\mathrm{fin}})$, proving (2).

Assume (2) and suppose
$\nvdash_{\mathbf{FL}^{\mathrm{FGC}}_{ew}}\varphi$.  By
Theorem~\ref{thm:FARL-strong-completeness}, the equation
$\varphi\approx1$ fails in $\FARL$.  Since $\FARL$ is
generated by its finite members, the same equation fails in some finite
$\mathscr{L}\in\FARL$.  Hence there is a valuation $v$ with
$v(\varphi)\ne1$, proving (1).

Assume (2), and let $\mathscr{F}_n$ be the free $\FARL$-algebra on
$n<\omega$ generators.  If $a,b\in F_n$ and $a\ne b$, choose terms $s,t$ in
$n$ variables representing $a,b$.  The identity $s\approx t$ fails in
$\mathscr{F}_n$, hence in $\FARL$.  By (2), it fails in a finite
$\mathscr{B}\in\FARL$ under some assignment of the variables.  By
freeness, that assignment extends to a homomorphism
$h:\mathscr{F}_n\to\mathscr{B}$ with $h(a)\ne h(b)$.  Thus $\mathscr{F}_n$ is
residually finite, proving (3).

Finally, assume (3), and let $s\approx t$ be an identity failing in
$\FARL$.  If $n$ variables occur in $s,t$, then the corresponding
elements $[s]$ and $[t]$ of $\mathscr{F}_n$ are distinct.  Residual finiteness
gives a homomorphism from $\mathscr{F}_n$ into a finite algebra
$\mathscr{B}\in\FARL$ that separates them.  Hence $s\approx t$ fails in
$\mathscr{B}$.  Thus $\FARL$ and
$\FARL_{\mathrm{fin}}$ have the same equational theory, and Birkhoff's
theorem yields (2).
\end{proof}

\begin{definition}\label{def:FEP-FARL}
A class $\mathcal K$ of algebras has the \emph{finite embeddability property}
(FEP) if every finite partial subalgebra of a member of $\mathcal K$ embeds, as
a partial algebra, into a finite member of $\mathcal K$.
\end{definition}

\begin{proposition}
\label{prop:FEP-implies-SFMP}
If $\FARL$ has the finite embeddability property, then
$\mathbf{FL}^{\mathrm{FGC}}_{ew}$ has the strong finite model property.
\end{proposition}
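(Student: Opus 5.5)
The plan is the standard argument that FEP for a variety of residuated-lattice expansions gives the strong finite model property. Let $\Gamma=\{\gamma_1,\dots,\gamma_k\}$ be finite and suppose $\Gamma\nvdash_{\mathbf{FL}^{\mathrm{FGC}}_{ew}}\varphi$.

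\textbf{Step 1: an arbitrary countermodel.} By Theorem~\ref{thm:FARL-strong-completeness} there are $\mathscr{L}\in\FARL$ and a valuation $v$ with $v(\gamma_i)=1$ for all $i$ and $v(\varphi)\ne 1$.

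\textbf{Step 2: the finite partial subalgebra.} Let $S$ be the finite set of all subformulas of $\varphi,\gamma_1,\dots,\gamma_k$. Put
\[
B=\{v(\psi):\psi\in S\}\cup\{0,1\}\subseteq L .
\]
This is a finite subset of $L$. Regard $B$ as a partial subalgebra of $\mathscr{L}$. For each operation symbol, $\wedge,\vee,\odot,\to,f,g$, the operation is defined on arguments from $B$ exactly when its value in $\mathscr{L}$ again lies in $B$, and it then takes that value. The constants $0,1$ are included so that the nullary operations, which interpret $\bot$ and $\top$, are defined.

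\textbf{Step 3: transfer to a finite algebra.} By the FEP assumption (Definition~\ref{def:FEP-FARL}) there are a finite $\mathscr{B}\in\FARL$ and an injective map $h:B\to\mathscr{B}$. This map preserves every operation wherever it is defined in $B$, and it preserves the constants $0,1$.

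Define the valuation $w=h\circ v$ on the variables occurring in $S$, and choose $w$ arbitrarily on all other variables. Then prove, by induction on the complexity of $\psi\in S$, that
\[
w(\psi)=h(v(\psi)).
\]
In the inductive step, say $\psi=\blacktriangle\chi$, the value $v(\chi)$ lies in $B$ and so does $g(v(\chi))=v(\psi)$. Hence $g$ is defined on $v(\chi)$ in the partial algebra, and the partial-homomorphism property gives
\[
h(g(v(\chi)))=g^{\mathscr{B}}(h(v(\chi)))=g^{\mathscr{B}}(w(\chi))=w(\psi).
\]
The binary connectives and the nullary constants are handled in the same way.

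\textbf{Step 4: conclusion.} For each $i$ we get $w(\gamma_i)=h(1)=1^{\mathscr{B}}$. Since $v(\varphi)\ne 1$ and $h$ is injective, $w(\varphi)=h(v(\varphi))\ne h(1)=1^{\mathscr{B}}$. So the finite algebra $\mathscr{B}$ with valuation $w$ is the countermodel required by Definition~\ref{def:FMP-FGC}.

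The only delicate point is making sure $h$ preserves the constants, so that $h(1)=1^{\mathscr{B}}$. This is why $0$ and $1$ are put into $B$ explicitly and why the embedding is read as one of partial algebras in the full signature, nullary operations included. With that convention the induction in Step 3 is routine.
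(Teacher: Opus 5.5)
Your proposal is correct and follows the paper's proof essentially step for step. It uses the same countermodel from Theorem~\ref{thm:FARL-strong-completeness}, the same finite partial subalgebra $\{v(\psi):\psi\in S\}\cup\{0,1\}$, the same FEP embedding, and the same structural induction that concludes by injectivity and preservation of $1$.
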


\begin{proof}
Let $\Gamma$ be finite and suppose that
$\Gamma\nvdash_{\mathbf{FL}^{\mathrm{FGC}}_{ew}}\varphi$.  By
Theorem~\ref{thm:FARL-strong-completeness}, there are
$\mathscr{L}\in\FARL$ and a valuation $v$ such that
\[
 v(\gamma)=1\quad(\gamma\in\Gamma),
 \qquad
 v(\varphi)\ne1.
\]
Let $\Sigma$ be the finite set of all subformulas of the formulas in
$\Gamma\cup\{\varphi\}$ and put
\[
 P=\{v(\psi):\psi\in\Sigma\}\cup\{0,1\}.
\]
Turn $P$ into a finite partial algebra of type
$\mathcal L_{\mathrm{FGC}}$ by defining a basic operation on a tuple from $P$
exactly when its value in $\mathscr{L}$ also belongs to $P$.  In particular, if
$\star(\psi_1,\ldots,\psi_k)\in\Sigma$, then the corresponding partial
operation is defined at
$(v(\psi_1),\ldots,v(\psi_k))$ and has value
$v(\star(\psi_1,\ldots,\psi_k))$.

By the FEP, there are a finite $\mathscr{B}\in\FARL$ and an injective
partial-algebra homomorphism $e:P\hookrightarrow B$.  Define a valuation $w$
in $\mathscr{B}$ by $w(p)=e(v(p))$ for every variable occurring in
$\Gamma\cup\{\varphi\}$, and arbitrarily on all remaining variables.  A
structural induction on $\psi\in\Sigma$ gives
\[
 w(\psi)=e(v(\psi)).
\]
Since $e$ preserves constants and is injective,
\[
 w(\gamma)=e(1)=1\quad(\gamma\in\Gamma),
 \qquad
 w(\varphi)=e(v(\varphi))\ne e(1)=1.
\]
Thus $\mathscr{B}$ is the required finite countermodel.
\end{proof}

The variety of residuated lattices has the finite embeddability property \cite{Galatos}, which gives a complete finite-model result for the
FGC-free fragment.

\begin{theorem}
\label{thm:FGC-free-SFMP}
Let $\Gamma$ be a finite set of FGC-free formulas and let $\varphi$ be
FGC-free.  If
\[
 \Gamma\nvdash_{\mathbf{FL}^{\mathrm{FGC}}_{ew}}\varphi,
\]
then there are a finite $\mathscr{L}\in\FARL$ and a valuation $v$ such
that
\[
 v(\gamma)=1\quad(\gamma\in\Gamma),
 \qquad
 v(\varphi)\ne1.
\]
Thus the FGC-free fragment has the strong finite model property.
\end{theorem}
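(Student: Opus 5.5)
The plan is to reduce the statement to the strong finite model property of $\mathbf{FL}_{ew}$ itself and then expand the finite countermodel trivially, exactly as in the proof of Theorem~\ref{thm:FGC-conservativity}.

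First, suppose $\Gamma\nvdash_{\mathbf{FL}^{\mathrm{FGC}}_{ew}}\varphi$ with $\Gamma\cup\{\varphi\}$ finite and FGC-free. By Theorem~\ref{thm:FGC-conservativity}, $\Gamma\nvdash_{\mathbf{FL}_{ew}}\varphi$. By strong completeness of $\mathbf{FL}_{ew}$ with respect to $\mathbb{RL}$, there are a residuated lattice $\mathbf{A}$ and a valuation $u$ with $u(\gamma)=1$ for all $\gamma\in\Gamma$ and $u(\varphi)\neq 1$.

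Second, I run the argument of Proposition~\ref{prop:FEP-implies-SFMP}, but for the variety $\mathbb{RL}$ and the $\mathbf{FL}_{ew}$-language only. Let $\Sigma$ be the finite set of subformulas of $\Gamma\cup\{\varphi\}$, and let $P=\{u(\psi):\psi\in\Sigma\}\cup\{0,1\}$, viewed as a finite partial subalgebra of $\mathbf{A}$. By the finite embeddability property of $\mathbb{RL}$ \cite{Galatos}, $P$ embeds, as a partial algebra, into a finite residuated lattice $\mathbf{B}$ via an injective map $e$. Setting $w(p)=e(u(p))$ for the variables in question, structural induction over $\Sigma$ gives $w(\psi)=e(u(\psi))$. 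Hence $w(\gamma)=1$ for all $\gamma\in\Gamma$, and $w(\varphi)\neq1$ by injectivity and since $e(1)=1$.

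Third, I expand $\mathbf{B}$ with $f=g=\operatorname{id}_B$. As already observed in the proof of Theorem~\ref{thm:FGC-conservativity}, \eqref{FARL1}--\eqref{FARL5} hold trivially, so $(\mathbf{B},\operatorname{id},\operatorname{id})$ is a finite member of $\FARL$. Since the formulas are FGC-free, $w$ (extended arbitrarily to other variables) assigns them the same values in the expansion, which yields the required finite countermodel.

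The only delicate point is the FEP step: the FEP literature is usually stated for the full residuated-lattice signature, and we need that the embedding preserves the constants $0,1$ and the operations actually occurring in $\Sigma$. This is handled by including $0,1$ in $P$, exactly as in Proposition~\ref{prop:FEP-implies-SFMP}. Everything else is routine.
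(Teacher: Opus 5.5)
Your proposal is correct and follows the paper's proof. You use conservativity (Theorem~\ref{thm:FGC-conservativity}) to obtain $\Gamma\nvdash_{\mathbf{FL}_{ew}}\varphi$, take a finite residuated-lattice countermodel, and expand it by $f=g=\operatorname{id}$ to a finite member of $\FARL$. The only difference is that you derive the strong finite model property of $\mathbf{FL}_{ew}$ explicitly from the finite embeddability property of $\mathbb{RL}$ via the partial-subalgebra argument, whereas the paper cites that property directly, having mentioned the FEP in the sentence before the theorem.
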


\begin{proof}
Theorem~\ref{thm:FGC-conservativity} shows that
$\Gamma\nvdash_{\mathbf{FL}_{ew}}\varphi$.  The strong finite model property of
$\mathbf{FL}_{ew}$ yields a finite residuated lattice $L$ and a
valuation $v$ such that every member of $\Gamma$ has value $1$, whereas
$v(\varphi)\ne1$.  Expanding $L$ by
$f=g=\operatorname{id}_L$ produces a finite member of $\FARL$ and does
not change the values of FGC-free formulas.
\end{proof}

\section{A categorical equivalence of the categories $\FARDL$ and $\CFAD'$}\label{section4.}

In this section, we will extend the categorical equivalence $\mathbf{C} \dashv \mathbf{K}$. More precisely, we lift this equivalence $\mathbf{K}$ to the category $\mathbb{FARDL}$, whose objects are Frobenius-adjoint residuated distributive lattices, and the category $\mathbb{FAD\text{iff}RDL'}$, whose objects are pairs formed by an object of $\mathbb{D\text{iff}RDL'}$ and a center Frobenius-right adjoint operator.

\subsection{Involutive extension of Frobenius-adjoint residuated lattices}\label{subsection:INV-FARL}

In this subsection, we introduce the notion of Frobenius-adjoint involutive residuated lattices, and prove that it is a generalization of Frobenius-adjoint residuated lattices, which provides the algebraic foundation for establishing the equivalence between the categories $\FARDL$ and $\CFAD'$.

\begin{definition}\label{def:FAIRL}
Let
$\mathbf{L}$ be an involutive residuated lattice and
$f:L\to L$ a unary operator.  The algebra\\
\[
\mathscr{L}=(\mathbf{L},f)
  =(L,\wedge,\vee,\odot,\to,0,1,f)
\]
is called a \emph{Frobenius-adjoint involutive residuated lattice} if, for all
$x,y\in L$, it satisfies, 
\begin{align}
 f(x)&\leq x,
  \tag{FARL1}\label{FARL1}\\
 f(f(x)\vee y)&=f(x)\vee f(y),
   \tag{FARL14}\label{FARL14}\\
 f(f(x)\to y)&=f(x)\to f(y),
   \tag{FARL17}\label{FARL17}
\end{align}
The class of Frobenius-adjoint involutive residuated lattices is denoted by
$\FAIRL$, forms a variety.
\end{definition}

\begin{remark}\label{rem:FAIRL-basic-properties}
(1)~~Notice that Frobenius-adjoint involutive residuated lattice can be also denoted by $(\mathbf{L},f,g)$ with $g:=\neg f\neg$, we will henceforth simply write it as $\mathscr{L}$ for brevity.

(2)~~It is verified that \ref{FARL7}-\ref{FARL13} and \ref{FARL22}-\ref{FARL23} of Proposition \ref{prop:FARL-basic-properties} hold in any Frobenius-adjoint involutive residuated lattices, which are used to prove Theorem \ref{thm:FAIRL}. Conversely, the result of Theorem \ref{thm:FAIRL} shows that the conclusion of Proposition \ref{prop:FARL-basic-properties} also holds for Frobenius-adjoint involutive residuated lattices.
\end{remark}

\begin{theorem} \label{thm:FAIRL}
The involutive subvariety of the variety $\FARL$ determined by the
\begin{center} $\neg\neg x=x$
\end{center}
is term-equivalent to the variety $\FAIRL$.  
\end{theorem}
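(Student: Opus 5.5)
The plan is to exhibit the two term translations explicitly and check that they are mutually inverse. Let $\FARL^{\neg\neg}$ denote the involutive subvariety of $\FARL$. In one direction, send $(\mathbf{L},f,g)\in\FARL^{\neg\neg}$ to $(\mathbf{L},f)$, i.e.\ forget $g$. In the other direction, send $(\mathbf{L},f)\in\FAIRL$ to $(\mathbf{L},f,g)$ with the term-defined $g(x):=\neg f(\neg x)$, as suggested in Remark~\ref{rem:FAIRL-basic-properties}(1).

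First direction. The axioms \eqref{FARL1}, \eqref{FARL14} and \eqref{FARL17} all hold in every member of $\FARL$ by Proposition~\ref{prop:FARL-basic-properties}, so $(\mathbf{L},f)\in\FAIRL$. Moreover \ref{FARL30} gives $g(\neg x)=\neg f(x)$. Substituting $\neg x$ for $x$ and using $\neg\neg x=x$ yields $g(x)=\neg f(\neg x)$. Hence $g$ is recovered by the intended term, and the round trip $\FARL^{\neg\neg}\to\FAIRL\to\FARL^{\neg\neg}$ is the identity. The round trip $\FAIRL\to\FARL^{\neg\neg}\to\FAIRL$ is trivially the identity, since $f$ is never changed.

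Second direction. This is the real work: verifying \eqref{FARL2}--\eqref{FARL5} for $g=\neg f\neg$ in a member of $\FAIRL$. \eqref{FARL1} is given. I will freely use the facts of Remark~\ref{rem:FAIRL-basic-properties}(2), namely monotonicity and idempotence of $f$ and $f(\neg f x)=\neg f x$ (\ref{FARL22}). I also use the involutive identities $\neg\neg x=x$, $\neg(a\vee b)=\neg a\wedge\neg b$, and $\neg g(y)=f(\neg y)$.
\begin{itemize}
\item[\eqref{FARL2}.] Suppose $x\le f(y)$. Then $\neg f(y)\le \neg x$, so $\neg f(y)=f(\neg f(y))\le f(\neg x)$. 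Negating gives $g(x)\le f(y)\le y$. Conversely, suppose $g(x)\le y$. Then $g(x)=\neg f(\neg x)=f(\neg f(\neg x))$, so monotonicity gives $g(x)\le f(y)$. Finally $x\le \neg f(\neg x)=g(x)$ because $f(\neg x)\le\neg x$.
\item[\eqref{FARL3}.] Put $w=\neg f(\neg x)$, so that $g(x)=w=f(w)$ by \ref{FARL22}. Then \eqref{FARL17} gives $f(g(x)\to y)=f(f(w)\to y)=f(w)\to f(y)=g(x)\to f(y)$.
\item[\eqref{FARL4}.] The same rewriting $g(x)=f(w)$ together with \eqref{FARL14} gives $f(g(x)\vee y)=g(x)\vee f(y)$.
\item[\eqref{FARL5}.] Compute $g(x\wedge g(y))=\neg f(\neg x\vee f(\neg y))$. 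By \eqref{FARL14} (with the join commuted) this equals $\neg\big(f(\neg x)\vee f(\neg y)\big)=g(x)\wedge g(y)$.
\end{itemize}

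The main obstacle is to make sure that the auxiliary properties borrowed from Proposition~\ref{prop:FARL-basic-properties} are genuinely derivable from \eqref{FARL1}, \eqref{FARL14}, \eqref{FARL17} and involution alone, without using $g$; Remark~\ref{rem:FAIRL-basic-properties}(2) asserts this. If I had to supply it, I would proceed as follows.
\begin{itemize}
\item Monotonicity of $f$ comes from \eqref{FARL14}: if $x\le y$, then $f(y)=f(x\vee y)$, and I then argue $f(x)\le f(y)$ via $f(1)=1$.
\item $f(1)=1$ follows from \eqref{FARL17} with $y=f(x)$, which gives $f(1)=f(f(x)\to f(x))=f(x)\to f(x)=1$, after first checking $f(f(x))=f(x)$.
\item \ref{FARL22} is the instance $y=0$ of \eqref{FARL17}, once $f(0)=0$ is known. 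For $f(0)=0$, \eqref{FARL1} already gives $f(0)\le 0$.
\end{itemize}
The delicate step is idempotence of $f$, which I would extract from \eqref{FARL14} with $y=0$: this yields $f(f(x))=f(x)\vee f(0)=f(x)$.
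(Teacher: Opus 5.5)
Your proposal is correct and follows the paper's proof. In one direction you forget $g$, using (FARL1), (FARL14), (FARL17) from Proposition~\ref{prop:FARL-basic-properties}; in the other you set $g:=\neg f\neg$ and check (FARL2)--(FARL5), where your (FARL22)-based fixed-point computations are the unpacked form of the paper's appeals to (FARL8) and (FARL10)--(FARL12). You also use (FARL30) to verify that the round trip recovers $g$, a step the paper leaves implicit. One small repair: your monotonicity sketch should read, for $x\le y$, that $f(x)\le x\le y$ gives $f(y)=f(f(x)\vee y)=f(x)\vee f(y)$ by (FARL14).
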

\begin{proof}
Let  $(\mathbf{L},f,g)\in \FARL$ such that $\mathbf{L}$ is an involutive residuated lattice. Then it follows from \eqref{FARL1} of Definition \ref{def:FARL}, \eqref{FARL14} and \eqref{FARL17} of Proposition \ref{prop:FARL-basic-properties} that  $(\mathbf{L},f)\in\FAIRL$.


Conversely, let $(\mathbf{L},f)\in\FAIRL$. Then we will show that $(\mathbf{L},f,g)\in \FARL$ with $g:=\neg f\neg$. 

(FARL1):\ It is directly follows from the condition \eqref{FARL1} of Definition \ref{def:FAIRL}.

(FARL2):\ If $g(x)\le y$, then by \ref{FARL8}, \ref{FARL10} and \ref{FARL11}, we have
\[
 x\le g(x)=f(g(x))\le f(y).
\]
Conversely, if $x\le f(y)$, then by \eqref{FARL1}, \ref{FARL11} and \ref{FARL12}, we have
\[
 g(x)\le g(f(y))= f(y)\leq y.
\]

(FARL3):\ For any $x,y\in L$, by \eqref{FARL3} and \eqref{FARL17}, we have
\[f(g(x)\rightarrow y)=f(f(g(x))\rightarrow y)= f(g(x))\rightarrow f(y)=g(x)\rightarrow f(y).    
\] 

(FARL4):\ For any $x,y\in L$, by \ref{FARL10} and \eqref{FARL14}, we have
\[f(g(x)\vee y)=f(f(g(x))\vee y)=f(g(x))\vee f(y)=g(x)\vee f(y).
\]

(FARL5):\ For any $x,y\in L$, by \eqref{FARL14}, we have
\[
 g(x\wedge g(y))=\neg f(\neg x\vee\neg g(y))=\neg f(\neg x\vee f(\neg y))=\neg\bigl(f(\neg x)\vee f(\neg y)\bigr)=g(x)\wedge g(y).
\]
\end{proof}

\begin{example}
\label{ex:block-nonMV-FAIRL}
Let
\[
N_4=\{0,a,b,1\},
\qquad
0<a<b<1,
\]
where \(a=\frac13\) and \(b=\frac23\). Equip \(N_4\) with the nilpotent
minimum operations
\[
x\odot y=
\begin{cases}
\min\{x,y\}, & x+y>1,\\
0,           & x+y\leq 1,
\end{cases}
\]
and
\[
x\to y=
\begin{cases}
1,                   & x\leq y,\\
\max\{1-x,y\},        & x>y.
\end{cases}
\]
The lattice operations are
\[
x\wedge y=\min\{x,y\},
\qquad
x\vee y=\max\{x,y\}.
\]
Then
\[
\mathbf N_4
=
\bigl(N_4,\wedge,\vee,\odot,\to,0,1\bigr)
\]
is a finite involutive residuated lattice.

Consider the direct power
\[
L=N_4^3,
\]
with all operations defined coordinatewise. Put
\[
\mathbf 0=(0,0,0),
\qquad
\mathbf 1=(1,1,1),
\]
and define \(f:L\to L\) by
\[
f(x_1,x_2,x_3)
=
(x_1\wedge x_2,x_1\wedge x_2,x_3).
\]
It is verified that
\[(\mathbf{L},f)
\]
is a Frobenius-adjoint involutive residuated lattice.
\end{example}

\begin{example} Let
\[
\mathbf{L}_{3}=\left\{0,\frac12,1\right\}
\]
be the three-element \L ukasiewicz chain, where
\begin{center}
$x\odot y=\max\{0,x+y-1\},~~
x\to y=\min\{1,1-x+y\}~\text{and}~
x\oplus y=\min\{1,x+y\}$.
\end{center}

Consider the direct product
\[
\mathbf{L}=\mathbf{L}_{3}\times \mathbf{L}_{3},
\]
with all operations defined coordinatewise, and
$\mathbf{0}=(0,0)~
\text{and}
~\mathbf{1}=(1,1)$.

Define the unary operations $f\colon L\to L$ by
\begin{center}
$f(x,y)=\bigl(\min\{x,y\},\min\{x,y\}\bigr).$
\end{center}
It is verified that  $(\mathbf{L},f)$ is a Frobenius-adjoint involutive residuated lattice.
\end{example} 

\subsection{Kalman structures derived from Frobenius-adjoint residuated distributive lattices}\label{subsection:KA-FARDL}

In this subsection, we introduce the notion of Frobenius-adjoint residuated lattices and prove that the classes of Frobenius-adjoint c-differential residuated distributive lattices satisfying $\mathbf{CK}$ and Frobenius-adjoint residuated distributive lattices are in one to one correspondence, which will play an important role in proving the main results of this section.

Let $(\mathbf{L},f,g)$ be a Frobenius-adjoint residuated lattice. Then we define a unary operator

\begin{center} $f_K:\mathbf{K}(L)\rightarrow \mathbf{K}(L)$
\end{center}
such that
\begin{center} $f_K(x,y)=(f(x), g(y))$,
\end{center}
for all $x,y\in \mathbf{K}(L)$.

\begin{theorem}\label{theorem4.1}
Let $(\mathbf{L},f,g)$ be a Frobenius-adjoint residuated distributive
lattice. Then, for any $(x,y),(x_1,y_1),(x_2,y_2)\in\mathbf{K}(L)$, the following properties hold:

\fKtag{1}\ $f_K$ is well defined, i.e.,
$f_K(x,y)\in\mathbf{K}(L)$,

\fKtag{2}\ $f_K(0,1)=(0,1)$,

\fKtag{3}\ $f_K(c)=c$,

\fKtag{4}\ $f_K(1,0)=(1,0)$,

\fKtag{5}\ $f_K(x,y)\leadsto(x,y)=(1,0)$,

\fKtag{6}\ 
$f_K\bigl((x_1,y_1)\leadsto(x_2,y_2)\bigr)
\leq
f_K(x_1,y_1)\leadsto f_K(x_2,y_2)$,

\fKtag{7}\ 
$f_K\bigl((x_1,y_1)\cap(x_2,y_2)\bigr)
=
f_K(x_1,y_1)\cap f_K(x_2,y_2)$,

\fKtag{8}\ 
$f_K\bigl(f_K(x_1,y_1)\leadsto(x_2,y_2)\bigr)
=
f_K(x_1,y_1)\leadsto f_K(x_2,y_2)$,

\fKtag{9}\ 
$f_K(x_1,y_1)\otimes f_K(x_2,y_2)
\leq
f_K\bigl((x_1,y_1)\otimes(x_2,y_2)\bigr)$,

\fKtag{10}\ 
$(x_1,y_1)\leq(x_2,y_2)$ implies
$f_K(x_1,y_1)\leq f_K(x_2,y_2)$,

\fKtag{11}\ 
$f_K\bigl(f_K(x_1,y_1)\leadsto f_K(x_2,y_2)\bigr)
=
f_K(x_1,y_1)\leadsto f_K(x_2,y_2)$,

\fKtag{12}\ 
$f_K\bigl(f_K(x_1,y_1)\cup(x_2,y_2)\bigr)
=
f_K(x_1,y_1)\cup f_K(x_2,y_2)$,

\fKtag{13}\ 
$f_K\bigl({\sim}f_K({\sim}(x,y))\bigr)
=
{\sim}f_K({\sim}(x,y))$,

\fKtag{14}\ 
$f_K(f_K(x,y))=f_K(x,y)$,

\fKtag{15}\ 
$f_K\bigl((x,y)\cap c\bigr)=f_K(x,y)\cap c$,

\fKtag{16}\ 
$f_K\bigl((x,y)\cup c\bigr)=f_K(x,y)\cup c$.
\end{theorem}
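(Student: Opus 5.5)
The plan is to verify each item coordinatewise. Throughout I use three facts: the description of the operations of $\mathbf{K}(L)$ recalled in Section~\ref{section:preliminaries-RL}, the order on $\mathbf{K}(L)$ (increasing in the first coordinate, decreasing in the second), and the list of properties in Proposition~\ref{prop:FARL-basic-properties}. Whenever I need inequalities in the second coordinate, I will remember that the order there is reversed. Recall also that ${\sim}(x,y)=(y,x)$ and $c=(0,0)$.

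\textbf{Well-definedness and the constants.} For \ref{fK1}, take $(x,y)\in\mathbf{K}(L)$, so that $x\odot y=0$.
\begin{itemize}
\item By \ref{FARL16}, $f(x)\odot g(y)=g\bigl(y\odot f(x)\bigr)$.
\item Since $f(x)\le x$ by \eqref{FARL1}, we get $y\odot f(x)\le y\odot x=0$.
\item Hence $f(x)\odot g(y)=g(0)=0$ by \ref{FARL9}.
\end{itemize}
Items \ref{fK2}--\ref{fK4} follow directly from $f(0)=g(0)=0$ and $f(1)=g(1)=1$ (\ref{FARL7}, \ref{FARL9}). For \ref{fK5}, the implication formula gives
\[
f_K(x,y)\leadsto(x,y)=\bigl((f(x)\to x)\wedge(y\to g(y)),\,y\odot f(x)\bigr),
\]
and this equals $(1,0)$ by \eqref{FARL1}, \ref{FARL8} and the computation $y\odot f(x)\le y\odot x=0$ just made.

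\textbf{Items reducing directly to Proposition~\ref{prop:FARL-basic-properties}.}
\begin{itemize}
\item \ref{fK7}: the meet is $(x_1\wedge x_2,\,y_1\vee y_2)$, so this is \ref{FARL15}.
\item \ref{fK10}: this is \ref{FARL11}, applied in both coordinates.
\item \ref{fK12}: the first coordinate is \ref{FARL14}; the second is $g(g(y_1)\wedge y_2)=g(y_1)\wedge g(y_2)$, which is \eqref{FARL5} after commuting the meet.
\item \ref{fK13}: ${\sim}f_K({\sim}(x,y))=(g(x),f(y))$, and applying $f_K$ returns $(fg(x),gf(y))=(g(x),f(y))$ by \ref{FARL10} and \ref{FARL12}.
\item \ref{fK14}: this is \ref{FARL13}.
\item \ref{fK15} and \ref{fK16}: we have $(x,y)\cap c=(0,y)$ and $(x,y)\cup c=(x,0)$, so both sides are computed immediately using $f(0)=g(0)=0$.
\end{itemize}

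\textbf{Item \ref{fK8}.} Here
\[
f_K(x_1,y_1)\leadsto(x_2,y_2)=\bigl((f(x_1)\to x_2)\wedge(y_2\to g(y_1)),\,y_2\odot f(x_1)\bigr).
\]
Apply $f_K$. In the first coordinate, \ref{FARL15} splits $f$ over the meet, and then \ref{FARL17} and \ref{FARL19} give $(f(x_1)\to f(x_2))\wedge(g(y_2)\to g(y_1))$. In the second coordinate, \ref{FARL16} gives $g(y_2\odot f(x_1))=g(y_2)\odot f(x_1)$. The result is exactly $f_K(x_1,y_1)\leadsto f_K(x_2,y_2)$. Item \ref{fK11} then follows from \ref{fK8} applied to $(x_2,y_2):=f_K(x_2,y_2)$ together with \ref{fK14}.

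\textbf{The inequalities \ref{fK6} and \ref{fK9}.} I expect these to be the main obstacle, because the order is reversed in the second coordinate.

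For \ref{fK6}, consider the first coordinate.
\begin{itemize}
\item Use \ref{FARL15} to split $f$ over the meet.
\item Bound $f(x_1\to x_2)\le f(x_1)\to f(x_2)$ by \ref{FARL20}.
\item Bound $f(y_2\to y_1)\le f(y_2\to g(y_1))=g(y_2)\to g(y_1)$ by \ref{FARL8}, \ref{FARL11} and \ref{FARL19}.
\end{itemize}
For the second coordinate I need $g(y_2)\odot f(x_1)\le g(y_2\odot x_1)$. This holds because $g(y_2)\odot f(x_1)=g(y_2\odot f(x_1))$ by \ref{FARL16}, and then monotonicity of $g$ together with $f(x_1)\le x_1$ finishes it.

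For \ref{fK9}, recall $(x_1,y_1)\otimes(x_2,y_2)=\bigl(x_1\odot x_2,\,(x_1\to y_2)\wedge(x_2\to y_1)\bigr)$.
\begin{itemize}
\item The first coordinate is \ref{FARL29}.
\item For the second coordinate I must show $g\bigl((x_1\to y_2)\wedge(x_2\to y_1)\bigr)\le(f(x_1)\to g(y_2))\wedge(f(x_2)\to g(y_1))$.
\item By monotonicity of $g$ it suffices to prove $g(x_1\to y_2)\le f(x_1)\to g(y_2)$ and the symmetric inequality.
\item By residuation this amounts to $f(x_1)\odot g(x_1\to y_2)\le g(y_2)$.
\item This follows from \ref{FARL16}: $f(x_1)\odot g(x_1\to y_2)=g\bigl(f(x_1)\odot(x_1\to y_2)\bigr)\le g\bigl(x_1\odot(x_1\to y_2)\bigr)\le g(y_2)$.
\end{itemize}
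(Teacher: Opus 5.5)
Your proof is correct and follows the paper's overall plan: you check every item coordinatewise in $\mathbf{K}(L)$ against Proposition~\ref{prop:FARL-basic-properties}, keeping the order reversed in the second coordinate. The differences are local.

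For $(f_K1)$ you use (FARL16) and $g(0)=0$. The paper instead writes $x\le\neg y$ and applies monotonicity together with (FARL24), $f(\neg y)=\neg g(y)$.

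For $(f_K9)$ the paper does not compute coordinates at all. It gets $f_K(a\leadsto b)\otimes f_K(a)\le f_K(b)$ from $(f_K6)$ by residuation and substitutes $b:=b'\otimes a$, tacitly using monotonicity. So $(f_K9)$ appears there as a formal consequence of $(f_K6)$, valid for any monotone operator with that property. Your direct check via (FARL29) and (FARL16) does not depend on $(f_K6)$ or $(f_K10)$.

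The paper derives $(f_K10)$ from $(f_K4)$ and $(f_K6)$ and recomputes $(f_K11)$ by hand. You read $(f_K10)$ off (FARL11) and obtain $(f_K11)$ from $(f_K8)$ and $(f_K14)$, which is shorter.

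Your computations in $(f_K6)$ and $(f_K13)$ are more careful than the ones the paper displays:
\begin{itemize}
\item In $(f_K6)$ you use the correct second coordinate $y_2\odot x_1$ of $(x_1,y_1)\leadsto(x_2,y_2)$. The paper writes $y_2\odot f(x_1)$ and so gets an equality there. You then justify the resulting genuine inequality by (FARL16) and monotonicity of $g$.
\item Also in $(f_K6)$, you make explicit the bound $f(y_2\to y_1)\le g(y_2)\to g(y_1)$, which the paper leaves implicit.
\item In $(f_K13)$ you obtain the correct value ${\sim}f_K({\sim}(x,y))=(g(x),f(y))$, where the paper displays $(g(x),g(x))$.
\end{itemize}
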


\begin{proof} $(f_K1)$:\ If $(x,y)\in \mathbf{K}(L)$, then $x\leq {\neg}y$. By (FRAL10) and (FRAL24) of Proposition \ref{prop:FARL-basic-properties}, 
\begin{center} $f(x)\leq f({\neg}y)={\neg}g (y)$,
\end{center} and hence $f(x)\odot g(y)=0$, that is $(f(x),g(y))\in \mathbf{K}(L)$, which implies that $f_K$ is well defined.

$(f_K2)$:\ From \ref{FARL7} and \ref{FARL9} of Proposition \ref{prop:FARL-basic-properties}, we have

\begin{center} $f_K(0,1)=(f(0),g(1))=(0,1)$.
\end{center}

$(f_K3)$:\ From \ref{FARL7} and \ref{FARL9} of Proposition \ref{prop:FARL-basic-properties}, we have

\begin{center} $f_K(c)=f_K(0,0)=(f(0),g(0))=(0,0)=c$.
\end{center}

$(f_K4)$:\ From \ref{FARL7} and \ref{FARL9} of Proposition \ref{prop:FARL-basic-properties}, we have 
\begin{center} $f_K(1,0)=(f(1),g(0))=(1,0)$.
\end{center}

$(f_K5)$:\ If $((x),y)\in \mathbf{K}(L)$, then by \ref{FARL8} and \ref{FARL11} of Proposition \ref{prop:FARL-basic-properties}, we have 
\begin{center}
$f (x)\odot y\leq f(x)\odot g(y)=0$,
\end{center}
and hence
\begin{center} $f(x)\odot y=0$, 
\end{center}
which implies
\begin{eqnarray*}
               f_K(x,y)\leadsto (x,y)&=& (f(x), g(y))\leadsto(x,y)\\&=& ((f(x)\rightarrow x)\wedge(y\rightarrow g(y)),f(x)\odot y)\\&=&(1,0).
                \end{eqnarray*}

$(f_K6)$:\ Let $(x_1,y_1),(x_2,y_2)\in \mathbf{K}(L)$. Then by \ref{FARL15}, \ref{FARL19} and \ref{FARL20} of Proposition \ref{prop:FARL-basic-properties}, we have

\begin{center} $f((x_1\rightarrow x_2)\wedge(y_2\rightarrow y_1))=f(x_1\rightarrow x_2)\wedge f (y_2\rightarrow y_1)\leq (f(x_1)\rightarrow f(x_2))\wedge (g(y_2)\rightarrow g(y_1)).$
\end{center}
Moreover, by \ref{FARL15}, \ref{FARL16} and \ref{FARL18} of Proposition \ref{prop:FARL-basic-properties}, we have
\begin{eqnarray*}
               f_K((x_1,y_1)\leadsto(x_2,y_2)) &=& f_K((x_1\rightarrow x_2)\wedge(y_2\rightarrow y_1), y_2\odot f(x_1))\\ &=& (f((x_1\rightarrow x_2)\wedge(y_2\rightarrow y_1)),g (y_2\odot f(x_1)))\\ &\leq & ((f(x_1)\rightarrow f(x_2))\wedge (g(y_2)\rightarrow g(y_1)), g(y_2)\odot f(x_1)).
               \end{eqnarray*}
On the other hand, we have
\begin{eqnarray*}
               f_K(x_1,y_1)\leadsto f_K(x_2,y_2)&=& (f(x_1),g(y_1))\leadsto (f(x_2),g(y_2))\\ &=& ((f(x_1)\rightarrow f (x_2))\wedge (g(y_2)\rightarrow g(y_1)), g (y_2)\odot f(x_1)).
               \end{eqnarray*}
                              
$(f_K7)$:\ Let $(x_1,y_1),(x_2,y_2)\in \mathbf{K}(L)$. Then by \ref{FARL18} of Definition \ref{def:FARL} and \eqref{FARL14} of Proposition \ref{prop:FARL-basic-properties}, we have
\begin{eqnarray*}
               f_K((x_1,y_1)\sqcap(x_2,y_2))&=& f_K(x_1\wedge x_2,y_1\vee y_2)\\ &=& (f(x_1\wedge x_2),g(y_1\vee y_2))\\ &=& (f (x_1)\wedge f(x_2),g(y_1)\vee g(y_2))\\ &=& (f(x_1),g(y_1))\sqcap (f(x_2), g(y_2))\\ &=& f_K(x_1,y_1)\sqcap f_K(x_2,y_2).
               \end{eqnarray*}

$(f_K8)$:\ Let $(x_1,y_1),(x_2,y_2)\in \mathbf{K}(L)$. Then by \ref{FARL15}--\eqref{FARL17} and \ref{FARL19} of Proposition \ref{prop:FARL-basic-properties}, we have
\begin{eqnarray*}
               f_K(f_K(x_1,y_1)\rightsquigarrow(x_2,y_2))&=& f_K((f(x_1),g(y_1))\rightsquigarrow(x_2,y_2))\\ &=& f_K((f (x_1)\rightarrow x_2)\wedge(y_2\rightarrow g(y_1)),f(x_1\odot y_2)\\ &=& (f((f (x_1)\rightarrow x_2)\wedge(y_2\rightarrow g(y_1))),g(f(x_1)\odot y_2))\\ &=& ((f(x_1)\rightarrow f(x_2))\wedge (g(y_2)\rightarrow g(y_1)),f(x_1)\odot g(y_2)).
               \end{eqnarray*}
On the other hand, we have
\begin{eqnarray*}
               f_K(x_1,y_1)\rightsquigarrow f_K(x_2,y_2)&=& (f(x_1),g(y_1))\rightsquigarrow (f(x_2),g(y_2))\\ &=& ((f (x_1)\rightarrow f(x_2))\wedge (g(y_2)\rightarrow g(y_1)),f(x_1)\odot g(y_2)).
               \end{eqnarray*}
               
$(f_K9)$:\  By \ref{fK6} and residuation, we have
\begin{center} $f_K((x_1,y_1)\leadsto (x_2,y_2))\otimes f_K(x_1,y_1)\leq f_K(x_2,y_2)$.
\end{center}
Taking $(x_2,y_2)=(x_2,y_2)\otimes (x_1,y_1)$ in the above equation, we have
\begin{eqnarray*} f_K(x_1,y_1)\otimes f_K(x_2,y_2)&=&f_K(x_2,y_2)\otimes f_K(x_1,y_1)\\&\leq&  f_K((x_1,y_1)\leadsto ((x_2,y_2)\otimes (x_1,y_1)))\otimes f_K(x_1,y_1)\\&\leq& f_K((x_1,y_1)\otimes (x_2,y_2)),
\end{eqnarray*} which implies
\begin{center} $f_K(x_1,y_1)\otimes f_K(x_2,y_2)\leq f_K((x_1,y_1)\otimes (x_2,y_2))$.
\end{center}

$(f_K10)$:\ If $(x_1,y_1)\leq (x_2,y_2)$, then by \ref{fK4} and \ref{fK6}, we have
\begin{center}$f_K((x_1,y_1)\leadsto (x_2,y_2))=f_K(1,0)=(1,0)\leq f_K(x_1,y_1)\leadsto f_K(x_2,y_2)$,
\end{center} which implies $f_K(x_1,y_1)\leq f_K(x_2,y_2)$.

$(f_K11)$:\ Let $(x_1,y_1),(x_2,y_2)\in \mathbf{K}(L)$. Then by \eqref{FARL16} and \eqref{FARL17} of Proposition \ref{prop:FARL-basic-properties}, we have
\begin{eqnarray*}
                f_K( f_K(x_1,y_1)\leadsto f_K(x_2,y_2))&=&  f_K((f(x_1),g(y_1))\leadsto (f(x_2),g (y_2))\\ &=& ((f(x_1)\rightarrow f(x_2))\wedge(g(y_2)\rightarrow g(y_1)),f(x_1)\odot g(y_2))\\ &=& (f(f(x_1)\rightarrow f (x_2))\wedge(g(y_2)\rightarrow g(y_1))),g (f(x_1)\odot g(y_2))\\ &=& ((f(x_1) \rightarrow f(x_2))\wedge (g (y_2)\rightarrow g(y_1)),f(x_1)\odot g(y_2)).
               \end{eqnarray*}
On the other hand, we have
\begin{eqnarray*}
                f_K(x_1,y_1)\leadsto  f_K(x_2,y_2)&=& (f(x_1),g(y_1))\leadsto (f(x_2),g(y_2))\\ &=& ((f(x_1) \rightarrow f (x_2))\wedge (g(y_2)\rightarrow g(y_1)),f(x_1)\odot g(y_2)),
               \end{eqnarray*} which implies 
\begin{center} $f_K( f_K(x_1,y_1)\leadsto f_K(x_2,y_2))=f_K(x_1,y_1)\leadsto  f_K(x_2,y_2)$.
\end{center}

$(f_K12)$:\ Let $(x_1,y_1),(x_2,y_2)\in \mathbf{K}(L)$. Then by \eqref{FARL5} of Definition \ref{def:FARL} and \eqref{FARL14} of Proposition \ref{prop:FARL-basic-properties}, we have
\begin{eqnarray*}
f_K(f_K(x_1,y_1)\cup (x_2,y_2))&=&f_K((f(x_1),g(y_1))\cup (x_2,y_2))\\&=&f_K(f(x_1)\vee x_2,g(y_1)\wedge y_2)\\&=&(f(f(x_1)\vee x_2),g(g(y_1)\wedge y_2))\\&=&(f(x_1)\vee f(x_2),g(y_1)\wedge g(y_2))\\&=&(f(x_1),g(y_1))\cup (f(x_2),g(y_2))\\&=&f_K(x_1,y_1)\cup f_K(x_2,y_2).
\end{eqnarray*}

$(f_K13)$:\ Let $(x,y)\in \mathbf{K}(L)$. Then by \ref{FARL10} and \ref{FARL13} of Proposition \ref{prop:FARL-basic-properties}, we have
\begin{eqnarray*}
               f_K({\sim}f_K({\sim}(x,y)))&=& f_K({\sim}f_K(y,x))\\&=& f_K(f_K(y,x)\leadsto(1,1))\\&=&f_K((f(y),g(x))\leadsto (1,1))\\&=&f_K(((f(y)\rightarrow 1)\wedge (1\rightarrow g(x)),g(x)))\\&=&f_K((g(x),g(x)))\\&=&(fg(x), g(x))\\&=&(g (x), g(x)).
               \end{eqnarray*}
On the other hand, we have
\begin{eqnarray*}
               {\sim}f_K({\sim}(x,y))&=& {\sim}f_K(y,x)\\&=& f_K(y,x)\leadsto(1,1)\\&=&(f(y),g(x))\leadsto (1,1)\\&=&((f (y)\rightarrow 1)\wedge (1\rightarrow g (x)),g (x))\\&=&(g (x),g (x)).
               \end{eqnarray*}
               
$(f_K14)$:\ Let $(x,y)\in \mathbf{K}(L)$. Then by \ref{FARL13} of Proposition \ref{prop:FARL-basic-properties}, we have
\begin{eqnarray*}
f_K(f_K(x,y))&=&f_K(f (x), g y)\\&=& (ff (x),g g y)\\&=&(f (x),g(y))\\&=& f_K(x,y).
\end{eqnarray*}

$(f_K15)$:\ Let $(x,y)\in \mathbf{K}(L)$. Then by \ref{FARL7} of Proposition \ref{prop:FARL-basic-properties}, we have
\begin{eqnarray*}
f_K((x,y)\cap c)&=&f_K((x,y)\cap(0,0))\\&=& f_K(0,y)\\&=&(f(0), g(y))\\&=& (0,g(y))\\&=&f_K(x,y)\cap c.
\end{eqnarray*}

$(f_K16)$:\ Let $(x,y)\in \mathbf{K}(L)$. Then by by \ref{FARL9} of Proposition \ref{prop:FARL-basic-properties}, we have
\begin{eqnarray*}
f_K((x,y)\cup c)&=&f_K((x,y)\cup(0,0))\\&=&f_K(x,0)\\&=&(f (x), g(0))\\&=&(f(x),0)\\&=&f_K(x,y)\cup c.
\end{eqnarray*}
\end{proof}

\begin{remark}\label{remark4.2} It is worth emphasizing that if $\mathbf{L}$ is a residuated distributive lattice, then $\mathbf{K}(L)$ is a special class of an involutive residuated lattice, with \ref{fK5},\ref{fK8} and \ref{fK12} of Theorem \ref{theorem4.1}, which forms a Frobenius-adjoint involutive residuated lattice in the sense of Definition \ref{def:FAIRL}.
\end{remark}

Motivated by Definition \ref{def:FAIRL},\ref{fK5},\ref{fK8}, \ref{fK12} of Theorem \ref{theorem4.1}, and the compatibility between Frobenius right adjoint operator and $c$ established in \ref{fK15} and \ref{fK16} of Theorem \ref{theorem4.1}, we introduce the notion of a center Frobenius right adjoint operator on c-differential residuated lattices, the resulting class of algebras will be called Frobenius adjoint c-differential residuated lattices.

\begin{definition}\label{def:centered-FA}
A \emph{Frobenius-adjoint c-differential residuated lattice} is an algebra
\[
 \mathcal{\mathscr{L}}_c=(\boldsymbol{\mathfrak{L}}, f_c)
 =(\mathcal{L},\cap,\cup,\otimes,\leadsto,\sim,f_c,0,c,1),
\]
where $\boldsymbol{\mathfrak{L}}=(\mathcal{L},\cap,\cup,\otimes,\leadsto,\sim,0,c,1)$ is a c-differential
residuated lattice and $f_c:\mathcal{L}\to \mathcal{L}$ satisfies
\begin{align*}
 f_c(c)&=c,\tag{$f_c1$}\label{fc1}\\
 f_c(x)\leadsto x&=1,\tag{$f_c2$}\label{fc2}\\
 f_c\bigl(f_c(x)\leadsto y\bigr)
   &=f_c(x)\leadsto f_c(y),\tag{$f_c3$}\label{fc3}\\
 f_c\bigl(f_c(x)\cup y)\bigr)
   &=f_c(x)\cup f_c(y),\tag{$f_c4$}\label{fc4}\\
 f_c(x\cap c)&=f_c(x)\cap c,\tag{$f_c5$}\label{fc5}\\
 f_c(x\cup c)&=f_c(x)\cup c,\tag{$f_c6$}\label{fc6}
\end{align*}
\end{definition}

The operator $f_c$ is called a \emph{center Frobenius right adjoint operator}.
We denote by $\CFAD$ the category whose objects are the distributive algebras
of this kind and whose morphisms preserve the full displayed signature.  The
full subcategory of $\CFAD$ consisting of the algebras satisfying $\mathbf{CK}$
is denoted by $\CFADp$.

\begin{remark}\label{rem:definition-explan.} (1) Taking $\boldsymbol{\mathfrak{L}}=\mathbf{K(L)}$ when $\mathbf{L}$ is a residuated distributive lattice, then by \ref{fK5},\ref{fK8},\\\ref{fK12},\ref{fK15} and \ref{fK16} of Theorem \ref{theorem4.1}, we have that $f_K$ is a center Frobenius right adjoint operator on the c-differential residuated lattice $\mathbf{K(L)}$, and it can be seen as the image of the Frobenius right and left adjoint operators $f$ and $g$, under the equivalence Kalman functor $\mathbf{K}$. 

(2) Note that c-differential residuated lattices are the special kinds of involutive residuated lattice, and hence the notion of Frobenius right adjoint operator is defined in Definition \ref{def:FAIRL}  can be also directly applied from involutive residuated lattices to c-differential residuated lattices. It is remarked here that Frobenius right adjoint operators and center Frobenius right adjoint operators on c-differential residuated lattices are not the same. For example, let $\mathbf{L}$ be a residuated distributive lattice and $\mathbf{K(L)}$ be the corresponding Kalman structure, 
\begin{center}$f_{\neg}:\mathbf{K}(L)\rightarrow \mathbf{K}(L)$ 
\end{center} be a unary operator defined by 
\begin{center} $f_{\neg}(x,y)=(x,\neg x)$,
\end{center} for any $(x,y)\in \mathbf{K}(L)$. Then $f_{\neg}$ is a Frobenius right adjoint operator of Definition \ref{def:FAIRL}, but it is not a center Frobenius right adjoint operator of Definition \ref{def:centered-FA} on the Kalman structure $\mathbf{K(L)}$.

 More precisely, if $(x,y)\in \mathbf{K}(L)$, then $x\odot \neg x=0$, which means that $f_{\neg}$ is well defined.

(\ref{FARL1}):\ If $(x,y)\in \mathbf{K}(L)$, then $ x\odot y=0$, and we have
\begin{eqnarray*}
               f_{\neg}(x,y)\leadsto (x,y)&=& (x,{\neg}x)\leadsto (x,y)\\&=&((x\rightarrow x)\wedge(y\rightarrow \neg x),x\odot y)\\&=&(1,0).
               \end{eqnarray*}

(\ref{FARL16}):\ Let $(x_1,y_1),(x_2,y_2)\in \mathbf{K}(L)$. Then
\[
\begin{aligned}
f_{\neg}\bigl(
    f_{\neg}(x_1,y_1)\cup(x_2,y_2)
\bigr)
&=
f_{\neg}\bigl(
    (x_1,\neg x_1)\cup(x_2,y_2)
\bigr)\\
&=
f_{\neg}\bigl(
    x_1\vee x_2,\,
    \neg x_1\wedge y_2
\bigr)\\
&=
\bigl(
    x_1\vee x_2,\,
    \neg(x_1\vee x_2)
\bigr)\\
&=
\bigl(
    x_1\vee x_2,\,
    \neg x_1\wedge\neg x_2
\bigr)\\
&=
(x_1,\neg x_1)\cup(x_2,\neg x_2)\\
&=
f_{\neg}(x_1,y_1)\cup f_{\neg}(x_2,y_2).
\end{aligned}
\]  
             
(\ref{FARL21}):\ Let $(x_1,y_1),(x_2,y_2)\in \mathbf{K}(L)$. First noticing that
 \begin{center} $x_1\odot (x_1\rightarrow x_2)\odot y_2\leq x_2\odot y_2=0$,
  \end{center}
 which directly implies $x_1\rightarrow x_2\leq y_2\rightarrow {-}x_1$ by (3) of Definition \ref{definietion2.1}. Then we have
\begin{eqnarray*}
               f_{\neg}(f_{\neg}(x_1,y_1)\leadsto(x_2,y_2))&=& f_{\neg}((x_1,\neg x_1)\leadsto (x_2,y_2))\\ &=& f_{\neg}((x_1\rightarrow x_2)\wedge(y_2\rightarrow \neg x_1),x_1\odot y_2)\\ &=& f_{\neg}(x_1\rightarrow x_2),x_1\odot y_2)\\ &=& (x_1\rightarrow x_2,\neg (x_1\rightarrow x_2))\\ &=& (x_1\rightarrow x_2,x_1\odot \neg x_2).
               \end{eqnarray*}
On the other hand, we have
\begin{eqnarray*}
               f_{\neg}(x_1,y_1)\leadsto f_{\neg}(x_2,y_2)&=& (x_1,\neg x_1)\leadsto (x_2,\neg x_2)\\ &=& ((x_1\rightarrow x_2)\wedge (\neg x_2\rightarrow \neg x_1), x_1\odot \neg x_2)\\ &=& (x_1\rightarrow x_2,x_1\odot \neg x_2).
               \end{eqnarray*}         
Then by Definition \ref{def:FAIRL} we have that $f_{\neg}$ is a  Frobenius right adjoint  on a c-differential residuated lattice $\mathbf{K(L)}$.
However, $f_{\neg}$ is not a center Frobenius right adjoint operator of Definition \ref{def:centered-FA} since Definition \ref{def:centered-FA} ($f_c1$) does not hold generally,
\begin{center} $f_{\neg}(c)=(0,0)=f_{\neg}(0,\neg 0)=(0,1)\neq (0,0)=c$.
\end{center}
\end{remark}

\begin{proposition}\label{proposition4.5}
In any Frobenius-adjoint $c$-differential residuated lattice
$\mathcal{\mathscr{L}}_c$, the following hold:

\fctag{7}\ $f_c(1)=1$,

\fctag{8}\ $f_c(0)=0$,

\fctag{9}\ $f_c(f_c(x))=f_c(x)$,

\fctag{10}\ $x\leq y$ implies $f_c(x)\leq f_c(y)$,

\fctag{11}\ $f_c(x)\otimes f_c(y)\leq f_c(x\otimes y)$,

\fctag{12}\ $f_c({\sim}f_c(x))={\sim}f_c(x)$,

\fctag{13}\ $f_c(x\cap y)=f_c(x)\cap f_c(y)$,

\fctag{14}\ $f_c(x\cup f_c(y))=f_c(x)\cup f_c(y)$,

\fctag{15}\ 
$f_c\bigl(f_c(x)\leadsto f_c(y)\bigr)
 =f_c(x)\leadsto f_c(y)$.
\end{proposition}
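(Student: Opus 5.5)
The plan is to derive all nine items directly from \eqref{fc1}--\eqref{fc6}, in a deliberate order, using only two general facts about the underlying involutive residuated lattice. The first fact is integrality: $a\leadsto b=1$ iff $a\le b$. The second is ${\sim}a=a\leadsto 0$, which I will justify from Definition~\ref{definition2.3.}: since $x\leadsto z={\sim}(x\otimes{\sim}z)$ and ${\sim}1=0$ (the involution is a dual automorphism of the bounded lattice), we get ${\sim}a={\sim}(a\otimes{\sim}0)=a\leadsto 0$. Throughout, \eqref{fc2} will be read as $f_c(x)\le x$.

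I will prove the items in the following order.

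\textbf{\ref{fc7}.} Put $y=x$ in \eqref{fc3}. Using \eqref{fc2}, this gives $f_c(1)=f_c(f_c(x)\leadsto x)=f_c(x)\leadsto f_c(x)=1$.

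\textbf{\ref{fc10}.} Suppose $x\le y$. Then $f_c(x)\le x\le y$, so $f_c(x)\leadsto y=1$. Applying \eqref{fc3} together with \ref{fc7} yields $1=f_c(1)=f_c(x)\leadsto f_c(y)$, that is, $f_c(x)\le f_c(y)$.

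\textbf{\ref{fc8}.} This is immediate from \eqref{fc2}, since $f_c(0)\le 0$.

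\textbf{\ref{fc9}.} Put $y=0$ in \eqref{fc4}. With \ref{fc8} this gives $f_c(f_c(x))=f_c(f_c(x)\cup 0)=f_c(x)\cup f_c(0)=f_c(x)$.

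\textbf{\ref{fc12}.} Put $y=0$ in \eqref{fc3} and use ${\sim}a=a\leadsto 0$ together with \ref{fc8}.

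\textbf{\ref{fc15}.} Put $y=f_c(y)$ in \eqref{fc3} and then apply \ref{fc9}.

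\textbf{\ref{fc14}.} This is \eqref{fc4} with the roles of the two arguments exchanged, using commutativity of $\cup$.

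\textbf{\ref{fc11}.} I will imitate the argument for \ref{fK9}. First I show $f_c(x\leadsto z)\le f_c(x)\leadsto f_c(z)$. Since $f_c(x)\le x$ and $\leadsto$ is antitone in its first argument, we have $x\leadsto z\le f_c(x)\leadsto z$. Monotonicity \ref{fc10} and \eqref{fc3} then give
\[
f_c(x\leadsto z)\le f_c\bigl(f_c(x)\leadsto z\bigr)=f_c(x)\leadsto f_c(z).
\]
Next take $z=x\otimes y$. Since $y\le x\leadsto(x\otimes y)$, monotonicity yields $f_c(y)\le f_c(x)\leadsto f_c(x\otimes y)$. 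Residuating this inequality gives \ref{fc11}.

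\textbf{\ref{fc13}.} This is the item I expect to be the main obstacle, because \eqref{fc3}--\eqref{fc4} only control $f_c$ on terms of the form $f_c(x)\star y$. Monotonicity gives $f_c(x\cap y)\le f_c(x)\cap f_c(y)$ at once. For the reverse inequality I will show that the set $F=\{a: f_c(a)=a\}$ is closed under $\cap$. By \ref{fc9}, $F$ is exactly the image of $f_c$.
\begin{itemize}
\item $F$ is closed under ${\sim}$, by \ref{fc12}.
\item $F$ is closed under $\cup$: for $a,b\in F$, \eqref{fc4} gives $f_c(a\cup b)=f_c(f_c(a)\cup b)=a\cup f_c(b)=a\cup b$.
\end{itemize}
Hence, by De Morgan for the involution, $f_c(x)\cap f_c(y)={\sim}\bigl({\sim}f_c(x)\cup{\sim}f_c(y)\bigr)\in F$. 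Since also $f_c(x)\cap f_c(y)\le x\cap y$, monotonicity gives
\[
f_c(x)\cap f_c(y)=f_c\bigl(f_c(x)\cap f_c(y)\bigr)\le f_c(x\cap y).
\]

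If the identity ${\sim}a=a\leadsto 0$ turns out to need adjusting to the paper's conventions for $\leadsto$, only \ref{fc12}, and through it \ref{fc13}, are affected; in that case I would instead derive closure of $F$ under ${\sim}$ from \eqref{fc3} with $y=0$, rewritten through the residuation law of Definition~\ref{definition2.3.}.
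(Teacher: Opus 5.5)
Your proof is correct, and it takes a genuinely different route from the paper.

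\textbf{The paper's route.} The paper's proof is a single sentence saying that $(f_c7)$--$(f_c15)$ ``can be deduced from'' the definition of $\FARL$ and Theorem~\ref{theorem4.1}. In effect it reads the properties off the Kalman model, where $f_K(x,y)=(f(x),g(y))$ and each item is inherited coordinatewise from $f$ and $g$. For instance, meet preservation $(f_K7)$ comes from $f$ preserving meets and $g$ preserving joins. Taken literally, this only covers algebras of the form $(\mathbf{K}(L),f_K)$. To transfer the properties to an arbitrary member of the class, one would need the representation of Theorem~\ref{theorem4.6} and Theorem~\ref{theorem5.15}. That representation is only available under distributivity and $\mathbf{CK}$. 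It also already invokes $(f_c10)$, $(f_c14)$ and $(f_c15)$, in Theorem~\ref{theorem4.6}(2) and Proposition~\ref{proposition5.7.}, so the argument would be circular.

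\textbf{Your route.} You work directly from $(f_c2)$--$(f_c4)$, using integrality, ${\sim}a=a\leadsto 0$ and De Morgan for ${\sim}$. This establishes the items in every Frobenius-adjoint $c$-differential residuated lattice, exactly as stated. It also shows that $(f_c1)$, $(f_c5)$, $(f_c6)$, distributivity and the Leibniz law are not needed for these items. Your dependency order is sound:
\begin{itemize}
\item $(f_c7)$ comes first.
\item Monotonicity then follows from $(f_c3)$.
\item $(f_c8)$ and $(f_c9)$ come next.
\item $(f_c11)$ is obtained by the residuation trick applied to $f_c(x\leadsto z)\le f_c(x)\leadsto f_c(z)$.
\end{itemize}

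The one step with real content is $(f_c13)$. There you use an idea that has no counterpart in the paper. The fixpoint set of $f_c$ is closed under $\cup$ by $(f_c4)$ and under ${\sim}$ by $(f_c12)$, hence under $\cap$. This is exactly where the involution takes over the role that the separate left adjoint $g$ plays in the Kalman picture.

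\textbf{Your fallback.} The fallback in your last paragraph is unnecessary. Under Definition~\ref{definition2.3.}, $0$ is the bottom, ${\sim}1=0$, and $x\leadsto z={\sim}(x\otimes{\sim}z)$, so ${\sim}a=a\leadsto 0$ holds as you derived it. The formula ${\sim}(x,y)=(x,y)\leadsto(1,1)$ in Section~2 is a typo for $(x,y)\leadsto(0,1)$, and $(0,1)$ is the bottom of $\mathbf{K}(L)$.
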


\begin{proof} The proofs of \ref{fc7}--\ref{fc15} can be deduced from Definition \ref{def:FARL} and Theorem \ref{theorem4.1}. 
\end{proof}

\begin{theorem}\label{theorem4.6} Let $\mathbf{L}$ be a residuated distributive lattice. Then
\[
\upsilon : (\mathbf L,f,g) \; \longmapsto \; (\mathbf{K(L)},f_K)
\]
defines a mapping from the class of  Frobenius adjoint expansions of $\mathbf L$ to the class of Frobenius adjoint expansions of $\mathbf{K(L)}$, which satisfies the following properties:

   (1)~~If $(\mathbf{L}, f_1, g_1)$ and $(\mathbf{L}, f_2, g_2)$ are two Frobenius adjoint expansions of $\mathbf{L}$, then
    \[
    \mathbf{K}(\mathbf{L}, f_1, g_1) \cong \mathbf{K}(\mathbf{L}, f_2, g_2)
    \]
    as Frobenius adjoint expansions of $\mathbf{K}(\mathbf{L})$, which shows that the image under $\mathbf{K}$ of any Frobenius adjoint expansions of $\mathbf{L}$ is essentially unique.

   (2)~~If $(\mathbf{K(L)}, f_K)$ is a Frobenius adjoint c-differential residuated distributive lattice, then there exist unary operators $f,g$ such that $(\mathbf{L}, f, g)$ is a Frobenius adjoint residuated distributive lattice and
    \[
   \upsilon(\mathbf{L}, f, g) \cong (\mathbf{K(L)}, f_K).
    \]

Consequently, $\upsilon$ establishes a one-to-one correspondence (up to isomorphism) between the class of Frobenius adjoint expansions of $\mathbf{L}$ to the class of Frobenius adjoint expansions of $\mathbf{K(L)}$.
\end{theorem}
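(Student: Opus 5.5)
The plan is to split the statement into a well-definedness part, a functoriality part for (1), and a reconstruction part for (2). Well-definedness of $\upsilon$ is already contained in Theorem~\ref{theorem4.1} together with Remark~\ref{rem:definition-explan.}(1). By $(f_K1)$, $f_K$ maps $\mathbf{K}(L)$ into itself. By \ref{fK3}, \ref{fK5}, \ref{fK8}, \ref{fK12}, \ref{fK15} and \ref{fK16}, it satisfies $(f_c1)$--$(f_c6)$. Hence $(\mathbf{K(L)},f_K)$ is a Frobenius-adjoint $c$-differential residuated distributive lattice.

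For (1), I read "essentially unique" as invariance under isomorphism. Let $h:(\mathbf L,f_1,g_1)\to(\mathbf L,f_2,g_2)$ be an isomorphism of $\FARDL$-algebras, and set $\mathbf K(h)(x,y)=(h(x),h(y))$. This is the Kalman functor on morphisms, so it is already an isomorphism of c-differential residuated lattices. It only remains to check commutation with the operators:
\[
\mathbf K(h)\bigl((f_1)_K(x,y)\bigr)=(hf_1(x),hg_1(y))=(f_2h(x),g_2h(y))=(f_2)_K\bigl(\mathbf K(h)(x,y)\bigr).
\]

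For (2), start from $(\mathbf{K(L)},f_K)\in\CFAD$, where $f_K$ is now an arbitrary center Frobenius right adjoint operator. Define
\[
f(x):=\pi_1 f_K(x,0),\qquad g(y):=\pi_2 f_K(0,y),
\]
and note that $(x,0),(0,y)\in\mathbf K(L)$. The first key step is that $f_K$ splits coordinatewise. We have $(x,y)\cup c=(x,0)$ and $(x,y)\cap c=(0,y)$. So $(f_c6)$ gives $f_K(x,0)=f_K(x,y)\cup c$, which has the same first coordinate as $f_K(x,y)$. Likewise $(f_c5)$ shows that $f_K(0,y)$ has the same second coordinate as $f_K(x,y)$. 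Thus $f_K(x,y)=(f(x),g(y))$, and once $(\mathbf L,f,g)$ is shown to be in $\FARDL$ we get $\upsilon(\mathbf L,f,g)=(\mathbf{K(L)},f_K)$ on the nose.

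The main obstacle is proving that $(\mathbf L,f,g)$ satisfies \eqref{FARL1}--\eqref{FARL5}, and in particular the adjunction \eqref{FARL2}. The plan is to avoid a direct attack and instead transport structure through the positive cone. By $(f_c2)$--$(f_c4)$, $(\mathbf{K(L)},f_K)\in\FAIRL$. By Theorem~\ref{thm:FAIRL}, $(\mathbf{K(L)},f_K,g_K)\in\FARL$ with $g_K:={\sim}f_K{\sim}$, so every identity of Definition~\ref{def:FARL} and Proposition~\ref{prop:FARL-basic-properties} holds there. Now compute on the cone $\{(x,0)\}$:
\[
f_K(x,0)=(f(x),0),\qquad g_K(x,0)={\sim}f_K(0,x)={\sim}(0,g(x))=(g(x),0),
\]
\[
(x,0)\leadsto(z,0)=(x\to z,0),\qquad (x,0)\cup(z,0)=(x\vee z,0),\qquad (x,0)\cap(z,0)=(x\wedge z,0).
\]
So the cone is closed under $f_K$, $g_K$, $\leadsto$, $\cup$ and $\cap$, and the map $x\mapsto(x,0)$ is an order embedding. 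Each of \eqref{FARL1}--\eqref{FARL5} involves only $\wedge,\vee,\to,f,g$ and the order. Therefore each one, instantiated in $\mathbf{K(L)}$ at cone elements, reads off coordinatewise as the corresponding law for $(\mathbf L,f,g)$, including the adjunction $g(x)\le y\iff x\le f(y)$.

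Distributivity is inherited from $\mathbf L$, so $(\mathbf L,f,g)\in\FARDL$. Combining the coordinatewise splitting with this verification yields (2), and together with (1) this gives the claimed one-to-one correspondence up to isomorphism.
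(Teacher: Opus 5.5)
\paragraph{Part (2).} Your argument for (2) is correct and takes a genuinely different route from the paper's.

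The paper's route is as follows:
\begin{itemize}
\item It sets $f(x)=\pi_1 f_K(x,0)$ but defines $g:=\neg f\neg$ inside $\mathbf L$.
\item It transports only $f(x)\le x$, $f(f(x)\vee y)=f(x)\vee f(y)$ and $f(f(x)\to y)=f(x)\to f(y)$ down to $\mathbf L$.
\item It applies Theorem~\ref{thm:FAIRL} to $(\mathbf L,f)$.
\item It recovers $f_K$ from $f_K(x,y)\cup c$ and $f_K(x,y)\cap c$ by a distributivity computation.
\end{itemize}
The third step presupposes that $\mathbf L$ is involutive, and an arbitrary residuated distributive lattice is not. Take the three-element G\"odel chain with $f=g=\mathrm{id}$. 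Then $f_K=\mathrm{id}$, yet $\neg f\neg(\tfrac12)=\neg\neg\tfrac12=1\neq\tfrac12$, so $\neg f\neg$ is not the left adjoint of $f$.

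Your route avoids this:
\begin{itemize}
\item You read $g$ off the second coordinate, $g(y)=\pi_2 f_K(0,y)$.
\item You split $f_K$ coordinatewise directly from $(f_c5)$ and $(f_c6)$.
\item You invoke Theorem~\ref{thm:FAIRL} in $\mathbf K(L)$, which is always involutive with $\neg={\sim}$.
\item You pull (FARL1)--(FARL5) back along the order embedding $x\mapsto(x,0)$, using $f_K(x,0)=(f(x),0)$ and $g_K(x,0)=(g(x),0)$.
\end{itemize}
This costs nothing extra. It makes (2) valid for every $\mathbf L\in\mathbb{RDL}$ and produces the correct $g$, whereas the paper's argument only covers involutive $\mathbf L$.

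\paragraph{The one-to-one claim.} This is where your proposal has a gap.

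Read literally, (1) cannot be intended. Take $\mathbf L$ to be the chain of Example~\ref{exm:FARL2} and compare the expansion there with $(\mathbf L,\mathrm{id},\mathrm{id})$. Literal (1) would make $(\mathbf K(L),\mathrm{id})$ isomorphic to $(\mathbf K(L),f_K)$ with $f_K\neq\mathrm{id}$. That is impossible, since any isomorphism $\Phi$ would satisfy $\Phi=f_K\circ\Phi$ and hence force $f_K=\mathrm{id}$.

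The paper's proof treats (1) as injectivity of $\upsilon$: if $(f_1)_K=(f_2)_K$ then $f_1=f_2$, and hence $g_1=g_2$ by uniqueness of left adjoints. Your reading, that $\mathbf K$ sends isomorphic expansions to isomorphic ones, is true. However, it only shows that $\upsilon$ is well defined on isomorphism classes. Combined with your (2) it gives surjectivity, not injectivity, so ``together with (1) this gives the one-to-one correspondence'' does not follow as written.

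The missing step is immediate from your own recovery formulas. Evaluating $(f_1)_K=(f_2)_K$ at $(x,0)$ and $(0,y)$ gives $f_1=f_2$ and $g_1=g_2$, so $\upsilon$ is a bijection on the nose.

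If you want the up-to-isomorphism version, argue as follows:
\begin{itemize}
\item An isomorphism $\Phi$ of Kalman expansions fixes $c$ and commutes with ${\sim}$.
\item Hence $\Phi(x,0)=(h(x),0)$ and $\Phi(0,y)=(0,h(y))$ for an automorphism $h$ of $\mathbf L$.
\item Comparing $\Phi\circ(f_1)_K$ with $(f_2)_K\circ\Phi$ on these elements gives $hf_1=f_2h$ and $hg_1=g_2h$.
\end{itemize}
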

\begin{proof}
(1)~~If $(\mathbf{L},f,g)$ is a Frobenius adjoint residuated distributive lattice, then by Theorem \ref{theorem4.1}\ref{fK5},\\\ref{fK8},\ref{fK12},\ref{fK15} and \ref{fK16} we have that  $(\mathbf{K(L)},f_K)$ satisfies the conditions \eqref{fc1}--\eqref{fc6}, which shows that $(\mathbf{K(L)},f_K)$ is a Frobenius adjoint c-differential residuated distributive lattice. 

Let $(\mathbf{L},f_1,f_1)$ and $(\mathbf{L},f_2,f_2)$ be two Frobenius adjoint residuated distributive lattices such that
\begin{center}$f_{K_1}(x,y)=(f_1(x),g_1(y))=(f_2(x),g_2(y))=f_{K_2}(x,y)$
\end{center}
for any $(x,y)\in \mathbf{K}(L)$. In particular, as $(x,{\neg}x)\in \mathbf{K}(L)$, we have
\begin{center}
$(f_1(x),g_1(\neg x))=(f_2(x),g_2(\neg x))$,
\end{center}
By \ref{FARL24} of Proposition \ref{prop:FARL-basic-properties},
Frobenius left adjoints are uniquely determined by Frobenius right adjoints. Hence, since 
$f_1=f_2$, we conclude that $g_1=g_2$.


  (2)~~If $(\mathbf{K(L)},f_K)$ is a Frobenius adjoint c-difference residuated distributive lattice (by Theorem \ref{theorem4.1} this is in fact the case), then we define the function $f:L\rightarrow L$ by
\begin{center} $f(x)=\pi_1f_K(x,0)$,
\end{center}
proving that $(\mathbf{L},f,g)$ is a Frobenius adjoint residuated distributive lattice, in which $g(x):=\neg f(\neg x)$. Here we first prove that $(\mathbf{L},f)$ is a Frobenius adjoint involutive residuated distributive lattice. By Proposition \ref{proposition4.5}, we have
\begin{center}~~~$(\star)$~~~ $f_K(x,0)=f_K((x,0)\cup c)=(f(x),0)$.
\end{center}

(FARL1):\ By \eqref{fc2} of Definition \ref{def:centered-FA} and $(\star)$, we have
\begin{center} $(1,0)=f_K(x,0)\leadsto(x,0)=(f(x),0)\leadsto (x,0)=(f(x)\rightarrow x,0)$,
\end{center}
which implies
\begin{center} $f(x)\rightarrow x=1$.
\end{center}

(FARL16):\ By $(\star)$ and \eqref{fc14} of Proposition \ref{proposition4.5}, we have
\[
\begin{aligned}
\bigl(f(f(x)\vee y),0\bigr)
&=
f_K\bigl(f(x)\vee y,0\bigr)\\
&=
f_K\bigl((f(x),0)\cup(y,0)\bigr)\\
&=
f_K\bigl(f_K(x,0)\cup(y,0)\bigr)\\
&=
f_K(x,0)\cup f_K(y,0)\\
&=
(f(x),0)\cup(f(y),0)\\
&=
\bigl(f(x)\vee f(y),0\bigr),
\end{aligned}
\]
which gives
\[
f\bigl(f(x)\vee y\bigr)=f(x)\vee f(y).
\]

(FARL21):\ By $(\star)$, we have
\begin{center} $f(x\leadsto y,0)=(f(x),0)\leadsto (y,0)=f_K(x,0)\leadsto (y,0)$,
\end{center}
and hence
\begin{eqnarray*}
               (f(f(x)\rightarrow y),0)&=&f_K(f_K(x,0)\leadsto (y,0))\\&=&f_K(x,0)\leadsto f_K(y,0)\\&=&(f(x),0)\rightarrow (f(y),0)\\&=&(f(x)\rightarrow f(y),0),
               \end{eqnarray*}
which proves
\begin{center} $f(f(x)\rightarrow y)=f(x)\rightarrow f(y)$.
 \end{center}
Hence by Theorem \ref{thm:FAIRL} that $(\mathbf{L},f,g)$ is a Frobenius adjoint residuated distributive lattice.

Moreover, we prove that $f_K=f_c$. By Proposition \ref{proposition4.5}, we have
\begin{center} $f_K(x,0)=(f(x),0)=f_c(x,0)$,
\end{center}
and
\begin{eqnarray*}
              f_c(0,y)&=&f_c(({\neg}y,0)\otimes c)\\&=& f_c({\neg}y,0)\otimes c\\&=&(f {\neg}y,0)\otimes c\\&=& (0,f(y))\\&=& f_K(0,y).
               \end{eqnarray*}
Here we also show that
\begin{center} $f_c(x,y)\cup c=f_c(x,0)=(f(x),0)=f_K(x,y)\cup c$,

$f_c(x,y)\cap c=f_c(x,0)=(0,f(y))=f_K(x,y)\cap c$.
\end{center}
Therefore the result follows by distributivity. More precisely, we have 
\begin{eqnarray*}
              f_c(x,y)&=&f_c(x,y)\cap(f_c(x,y)\cup c)\\&=&f_c(x,y)\cap(f_K(x,y)\cup c)\\&=&(f_c(x,y)\cap f_K(x,y))\cup (f_c(x,y)\cap c)\\&=&(f_K(x,y)\cap f_c(x,y))\cup (f_K(x,y)\cap c)\\&=&f_K(x,y)\cap(f_c(x,y)\cup c)\\&=&f_K(x,y)\cap(f_K(x,y)\cup c)\\&=&f_K(x,y),
               \end{eqnarray*} which implies $f_K=f_c$.
\end{proof}

 The following remark highlights the relevance of Theorem \ref{theorem4.6} through an application involving Example \ref{exm:FARL2}.

\begin{remark}\label{example5.9.} Let $(\mathbf{L},f,g)$ be the Frobenius adjoint residuated distributive lattice  in Example \ref{exm:FARL2}. 
Here it is checked that $(\mathbf{K}(L),\cap,\cup,\otimes,\leadsto,(1,0),(0,0),(0,1))$ is a c-differential residuated distributive lattice with $c=(0,0)$, where 
\begin{center} $\mathbf{K}(L)=\{(0,1),(0,\frac{1}{2}),(0,0),(\frac{1}{2},\frac{1}{2}),(\frac{1}{2},0),(1,0)\}$,
\end{center}
 and the Hasse diagram as follows:
\begin{center}
  \begin{tikzpicture}
  \node[circle, fill, inner sep=2pt] (a) at (1,1) {};
  \node[circle, fill, inner sep=2pt] (b) at (-1,1) {};
  \node[circle, fill, inner sep=2pt] (c) at (0,2) {};
   \node[circle, fill, inner sep=2pt] (d) at (0,3) {};
  \node[circle, fill, inner sep=2pt] (zero) at (0,0) {};
   \node[circle, fill, inner sep=2pt] (e) at (0,-1) {};
  \node at (a) [right=3pt] {$(0,0)$};
  \node at (b) [left=3pt] {$(\frac{1}{2},\frac{1}{2})$};
  \node at (c) [left=3pt] {$(\frac{1}{2},0)$};
  \node at (d) [left=3pt] {$(1,0)$};
   \node at (e) [left=3pt] {$(0,1)$};
  \node at (zero) [right=3pt] {$(0,\frac{1}{2})$};
  \draw (e)--(zero) -- (a) -- (c) --(d)--(c)--(b) -- (zero)--(e);
  \draw (c) -- (a);
\end{tikzpicture}
\end{center}
\begin{center} {\bf{Figure 8.}} Hasse diagram of Remark \ref{example5.9.}
\end{center}
Then it follows from Theorem \ref{theorem4.6} that the algebraic structure $$(\mathbf{K}(L),\cap,\cup,\otimes,\leadsto,f_K,(1,0),(0,0),(0,1))$$ is a Frobenius adjoint c-differential residuated distributive lattice, where
\begin{center}
$f_K(x,y)=
\begin{cases}
(1,0), & (x,y)= (1,0),\\
(0,0), & (x,y)= (0,0),(\frac{1}{2},0),\\
(0,1), & (x,y)= (\frac{1}{2},\frac{1}{2}),(0,\frac{1}{2}),(0,1),
\end{cases}$
\end{center}
and its corresponding dual operator $g_K$ as follows:
\begin{center}
$g_K(x,y)=
\begin{cases}
(1,0), & (x,y)= (1,0),(\frac{1}{2},0),(\frac{1}{2},\frac{1}{2}),\\
(0,0), & (x,y)= (0,0),(0,\frac{1}{2}),\\
(0,1), & (x,y)= (0,1).
\end{cases}$
\end{center}

Conversely, if the Frobenius adjoint $c$-differential residuated distributive lattice $$(\mathbf{K}(L),\cap,\cup,\otimes,\leadsto,f_K,g_K,(1,0),(0,0),(0,1))$$ is given in advance as above, then by Theorem \ref{theorem4.6}, one can directly recover the Frobenius adjoint residuated distributive lattice $(\mathbf{L},f,g)$ constructed by Example \ref{exm:FARL2}.
\end{remark}

\subsection{Extending the categorical equivalence between the categories $\mathbb{RDL}$ and $\mathbb{D\textmd{iff}RDL'}$ }\label{subsection:CAE-FARL}

In this subsection, we will extend the categorical equivalence $\mathbf{C} \dashv \mathbf{K}$. Specifically, here we define two extended functors, which we also call $\mathbf{C}$ and $\mathbf{K}$, between the category $\mathbb{FARDL}$, whose objects are Frobenius adjoint residuated distributive lattices, and the category $\mathbb{FAD\text{iff}RDL}$, whose objects are Frobenius adjoint c-differential residuated distributive lattices. In both categories, the morphisms are the corresponding algebra homomorphisms.

It is easy to check that the map
\begin{center}
$\mathbf{K}_{\phi}:(\mathbf{K(L)}_1,f_{K_1},g_{K_1})\rightarrow (\mathbf{K(L)}_2,f_{K_2},g_{K_2})$
 \end{center} given by
 \begin{center} $\mathbf{K}_{\phi}(x,y)=(\phi(x),\phi(y))$
 \end{center} is indeed a morphism in $\mathbb{FAD\text{iff}RDL}$ from $(\mathbf{K(L)}_1,f_{K_1},g_{K_1})$ to $(\mathbf{K(L)}_2,f_{K_2},g_{K_2})$. These assignments establish a functor $\mathbf{K}$ from $\mathbb{FARDL}$ to $\mathbb{FAD\text{iff}RDL}$.

Moreover, if $\epsilon:\boldsymbol{\mathfrak{L}}_1\rightarrow \boldsymbol{\mathfrak{L}}_2$ is a homomorphism of c-differential residuated distributive lattices, then it is not hard to see that
\begin{center}$\mathbf{C}_{\epsilon}:\boldsymbol{\mathfrak{L}}_1\rightarrow \boldsymbol{\mathfrak{L}}_2$
\end{center} defined by
\begin{center} $\mathbf{C}_\epsilon(x)=\epsilon(x)$
\end{center} is a homomorphism of residuated distributive lattices.

The above construction can be lifted to the corresponding Frobenius adjoint algebras.

\begin{proposition}\label{proposition5.7.} Let $\boldsymbol{(\mathfrak{L}},f_c)\in \mathbb{FAD\text{iff}RDL}$. Then $(\mathcal{C\boldsymbol{(\mathfrak{L}})},f_c,g_c)\in\mathbb{FARDL}$, where
\begin{center} $g_c(x):=\sim f_c(\sim x)$ for any $x\in \mathcal{L}$.
\end{center}
If
$\epsilon:\boldsymbol{\mathfrak{L}}_1\rightarrow \boldsymbol{\mathfrak{L}}_2$  is a morphism in $\mathbb{FADRDL}$, then
$\mathbf{C}_\epsilon:\mathcal{C\boldsymbol{(\mathfrak{L_\text{1}}})}\rightarrow \mathcal{C\boldsymbol{(\mathfrak{L_\text{2}}})}$ is a morphism in $\mathbb{FARDL}$.
\end{proposition}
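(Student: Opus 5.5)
The plan is to get the whole Frobenius-adjoint structure on $\boldsymbol{\mathfrak{L}}$ first, and only then restrict it to the positive cone $\mathcal{C}(\mathcal{L})=\{x\in\mathcal{L}\mid x\geq c\}$.

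\emph{Step 1: $(\boldsymbol{\mathfrak{L}},f_c)\in\FAIRL$.} The underlying algebra $\boldsymbol{\mathfrak{L}}$ is an involutive residuated lattice.
\begin{itemize}
\item \eqref{fc2} gives $f_c(x)\leq x$, which is \eqref{FARL1}.
\item \eqref{fc4} is \eqref{FARL14}.
\item \eqref{fc3} is \eqref{FARL17}.
\end{itemize}
Theorem~\ref{thm:FAIRL} then shows that $(\boldsymbol{\mathfrak{L}},f_c,g_c)$ with $g_c={\sim}f_c{\sim}$ belongs to $\FARL$. In particular, on the whole of $\mathcal{L}$ we get the adjunction $g_c\dashv f_c$, the laws \eqref{FARL3}--\eqref{FARL5}, and all the properties of Proposition~\ref{prop:FARL-basic-properties}.

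\emph{Step 2: both operators preserve the cone.} This is the step I expect to be the main point, because $g_c$ is defined through ${\sim}$, which does not preserve $\mathcal{C}(\mathcal{L})$.
\begin{itemize}
\item For $f_c$: if $x\geq c$, then by \eqref{fc6}
\[
f_c(x)=f_c(x\cup c)=f_c(x)\cup c\geq c .
\]
\item For $g_c$: if $x\geq c$, then ${\sim}x\leq{\sim}c=c$. Hence, by \eqref{fc5},
\[
f_c({\sim}x)=f_c({\sim}x\cap c)=f_c({\sim}x)\cap c\leq c ,
\]
and therefore $g_c(x)={\sim}f_c({\sim}x)\geq{\sim}c=c$.
\end{itemize}
So both $f_c$ and $g_c$ restrict to maps $\mathcal{C}(\mathcal{L})\to\mathcal{C}(\mathcal{L})$.

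\emph{Step 3: the axioms transfer to the cone.} In $\mathcal{C}(\boldsymbol{\mathfrak{L}})$, the operations $\cap$, $\cup$ and $\leadsto$ are those induced from $\boldsymbol{\mathfrak{L}}$; closure under $\leadsto$ is part of the fact, recalled in Section~\ref{section:preliminaries-RL}, that $\mathcal{C}(\boldsymbol{\mathfrak{L}})\in\mathbb{RDL}$. The modified product $\otimes_c$ and the new bottom $c$ never occur in \eqref{FARL1}--\eqref{FARL5}. The axioms therefore transfer as follows.
\begin{itemize}
\item \eqref{FARL1}, \eqref{FARL3}, \eqref{FARL4} and \eqref{FARL5} are equations in $\cap,\cup,\leadsto,f_c,g_c$ only. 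They hold in $\mathcal{L}$, so they hold for arguments in $\mathcal{C}(\mathcal{L})$, with both sides in the cone by Step 2.
\item \eqref{FARL2} is an order statement between elements of the cone, and the order is inherited, so it restricts as well.
\end{itemize}
Distributivity is inherited from $\boldsymbol{\mathfrak{L}}$. Together these show $(\mathcal{C}(\boldsymbol{\mathfrak{L}}),f_c,g_c)\in\FARDL$.

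\emph{Step 4: morphisms.} Let $\epsilon$ be a morphism in $\mathbb{FAD\text{iff}RDL}$. It preserves $\cap,\cup,\otimes,\leadsto,{\sim},c$ and $f_c$.
\begin{itemize}
\item $\epsilon$ maps $\mathcal{C}(\mathcal{L}_1)$ into $\mathcal{C}(\mathcal{L}_2)$, since it preserves $c$ and the order.
\item It preserves $\otimes_c$, because $\epsilon((x\otimes y)\cup c)=(\epsilon x\otimes\epsilon y)\cup c$.
\item It preserves the constants $c$ and $1$.
\item It preserves $g_c$, because $\epsilon({\sim}f_c({\sim}x))={\sim}f_c({\sim}\epsilon x)$.
\end{itemize}
Combined with the already noted fact that $\mathbf{C}_\epsilon$ is a homomorphism of residuated distributive lattices, this makes $\mathbf{C}_\epsilon$ a morphism in $\FARDL$.
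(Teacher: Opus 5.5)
Your proposal is correct and follows essentially the same route as the paper. Both arguments show that the cone is closed under $f_c$ and $g_c$, reuse the argument of Theorem~\ref{thm:FAIRL} for \eqref{FARL1}--\eqref{FARL5}, and rely on $\mathbf{C}_\epsilon$ being a homomorphism of residuated distributive lattices. Your write-up is more careful than the paper's sketch in three places:
\begin{itemize}
\item You verify $(\boldsymbol{\mathfrak{L}},f_c)\in\FAIRL$ on the whole algebra and then restrict. This tacitly uses ${\sim}x=x\leadsto 0$, which is what makes $\neg f_c\neg=g_c$.
\item You get cone-closure directly from \eqref{fc6} and \eqref{fc5}, rather than via $f_c(c)=c$, \ref{fc14} and monotonicity.
\item You check that $\mathbf{C}_\epsilon$ preserves $f_c$ and $g_c$, which the paper leaves implicit.
\end{itemize}
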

\begin{proof} We show that $f_c$ and $g_c$ are well defined in $\boldsymbol{\mathfrak{L}}$, and then prove that 
$(\boldsymbol{\mathfrak{L}},f_c,g_c)\in\mathbb{FARDL}$.  

In particular, if $x\in \mathcal{C(L)}$, then $x=x\cup c$. By \eqref{fc1} and \ref{fc15}, we have
\begin{center} $f_c(x\cup c)=f_c(x\cup f_c c)=f_c (x)\cup f_c(c)$, 
\end{center} which implies
\begin{center}$f_c(x)=f(x)\cup c$,
\end{center} and hence $f_c(x)\in\mathcal{C(L)}$. And if $x\in \mathcal{C(L)}$, i.e., $x\geq c$, then ${\sim} x\leq {\sim} c$. By  \eqref{fc2} and \ref{fc10}, we have
\begin{center} $f_c(\sim x)\leq f_c(\sim c)=\sim c$,
\end{center} which implies
\begin{center} $g_c(x)=\sim f_c(\sim x)\geq {\sim}({\sim} c)=c$.
\end{center} Hence $g_c(x)\geq c$, which shows $g_c(x)\in \mathcal{C(L)}$.

Moreover, \eqref{FARL1} can be obtained from \eqref{fc1}.
The proofs of \eqref{FARL2}--\eqref{FARL5}  are similar to those of \eqref{FARL2}--\eqref{FARL5} in the derivation from Theorem \ref{thm:FAIRL}, respectively. The other part can be obtained from the fact that $\mathbf{C}_\epsilon$ is a homomorphism of residuated distributive lattices.
\end{proof}

\begin{proposition}\label{proposition5.12.} Let $(\mathbf{L},f,g)$ be a Frobenius adjoint residuated distributive lattice. Then the map
\begin{center} $\rho:(\mathbf{L},f,g)\rightarrow (\mathcal{C(\mathbf{K(L)}},f_K,g_K)$
\end{center} given by
\begin{center} $\rho(x)=(x,0)$
\end{center} is an isomorphism in $\mathbb{FARDL}$.
\end{proposition}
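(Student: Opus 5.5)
The plan is to use the known non-expanded result from \cite{Castiglionib}: the unit map $\rho:\mathbf{L}\to\mathcal{C}(\mathbf{K}(\mathbf{L}))$, $x\mapsto(x,0)$, is already an isomorphism of residuated distributive lattices. Given that, it suffices to show that $\rho$ commutes with the two unary operators, i.e.
\[
\rho(f(x))=f_K(\rho(x)),\qquad \rho(g(x))=g_K(\rho(x)),
\]
where on $\mathcal{C}(\mathbf{K}(L))$ the Frobenius operators are the restrictions of $f_K$ and of $g_K:={\sim}f_K{\sim}$, as in Proposition~\ref{proposition5.7.}. A bijective homomorphism in the full signature is then an isomorphism in $\FARDL$, since the inverse of a bijective homomorphism is again one.

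First I will recall why $\rho$ is a bijective residuated-lattice morphism. An element $(x,y)\in\mathbf{K}(L)$ satisfies $(x,y)\geq c=(0,0)$ iff $y\leq 0$, i.e.\ $y=0$, so $\mathcal{C}(\mathbf{K}(L))=\{(x,0):x\in L\}$ and $\rho$ is a bijection. Lattice operations are preserved coordinatewise. The product $\otimes_c$ is preserved because
\[
(x,0)\otimes(z,0)=(x\odot z,(x\to 0)\wedge(z\to 0)),
\]
and joining with $c$ kills the second coordinate. The residual $\leadsto$ is preserved because $(x,0)\leadsto(z,0)=((x\to z)\wedge(0\to 0),0)=(x\to z,0)$. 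Finally, the bottom $c=(0,0)=\rho(0)$ and the top $(1,0)=\rho(1)$.

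For $f$, we have $f_K(x,0)=(f(x),g(0))=(f(x),0)=\rho(f(x))$ by \ref{FARL9}. For $g$, I compute $g_K(x,0)={\sim}f_K({\sim}(x,0))$. The one delicate point is which negation ${\sim}$ is meant on $\mathcal{C}(\mathbf{K}(L))$. If it is the involution of $\mathbf{K}(L)$, then ${\sim}(x,0)=(0,x)$ and $f_K(0,x)=(f(0),g(x))=(0,g(x))$, so $g_K(x,0)=(g(x),0)=\rho(g(x))$ by \ref{FARL7}. If instead the internal negation ${\sim}_c z=z\leadsto c$ is intended, the computation is $(x,0)\leadsto(0,0)=(\neg x,0)$. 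It then gives $(\neg f(\neg x),0)$, which agrees with $g(x)$ only when $\mathbf{L}$ is involutive.

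I therefore expect this step to be the main obstacle: fixing the reading of $g_c$ consistently with Proposition~\ref{proposition5.7.}. I will adopt the ambient involution of $\mathbf{K}(L)$, for which the check above goes through for all $\mathbf{L}\in\FARDL$. Under that reading, $\rho$ preserves the full signature and is bijective, hence it is an isomorphism in $\FARDL$.
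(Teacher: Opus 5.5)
Your proposal is correct and follows the same argument as the paper. Both take the residuated-lattice isomorphism $\rho$ from \cite{Castiglionib} as given (you additionally re-verify it, noting $\mathcal{C}(\mathbf{K}(L))=\{(x,0):x\in L\}$), then check $\rho\circ f=f_K\circ\rho$ and $\rho\circ g=g_K\circ\rho$ using only $f(0)=0$ and $g(0)=0$. Reading ${\sim}$ in $g_K={\sim}f_K{\sim}$ as the involution of the whole algebra $\mathbf{K}(L)$ is the paper's convention (cf.\ Remark~\ref{rem:ambient-involution}), and your explicit computation $g_K(x,0)={\sim}f_K(0,x)={\sim}(0,g(x))=(g(x),0)$ is clearer than the paper's shorthand $(g(x),g(0))=g_K(x,0)$, though both give the same value.
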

\begin{proof} 
It is well known (see \cite{Castiglionib}) that $\rho$ is a bijective homomorphism of distributive residuated lattices. To prove our claim, it remains to show that it preserves both $f$ and $g$. Indeed, let $x \in L$. It follows from \ref{FARL7} and \ref{FARL9} of Proposition \ref{prop:FARL-basic-properties} that
\begin{center} $\rho(f(x))=(f(x),0)=(f(x),g(0))=f_K((x,0))=f_K(\rho(x))$,

$\rho(g(x))=(g(x),0)=(g(x), g(0))=g_K((x,0))=g_K(\rho(x))$,

\end{center} which proves that $\rho$ preserves the Frobenius right and left adjoint operators.
\end{proof}

If $\boldsymbol{\mathfrak{L}}$ is a c-differential residuated distributive lattice, then
\begin{center} $\sigma:\boldsymbol{\mathfrak{L}}\rightarrow  \mathbf{K}(\mathcal{C(\boldsymbol{\mathfrak{L}})})$
\end{center} given by
\begin{center} $\sigma(x)=(x\cup c,\sim x\cup c)$
\end{center} is an injective homomorphism of c-differential residuated distributive lattices (see \cite{Castiglionib}).

To obtain the main result of this subsection, we need the following important results.

\begin{proposition}\label{proposition5.13} Let $(\boldsymbol{\mathfrak{L}},f_c)$ be a Frobenius adjoint c-differential residuated distributive lattice. Then the injective map $\sigma$ preserves $f_c$.
\end{proposition}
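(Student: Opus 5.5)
The plan is to verify directly that $\sigma\circ f_c=f_{K}\circ\sigma$, where $f_K$ denotes the operator on $\mathbf{K}(\mathcal{C}(\boldsymbol{\mathfrak{L}}))$ induced, as in Theorem~\ref{theorem4.1}, by the Frobenius-adjoint structure $(\mathcal{C}(\boldsymbol{\mathfrak{L}}),f_c,g_c)$ of Proposition~\ref{proposition5.7.}. Concretely, for $(a,b)\in\mathbf{K}(\mathcal{C}(\boldsymbol{\mathfrak{L}}))$ we have $f_K(a,b)=(f_c(a),g_c(b))$ with $g_c(b)={\sim}f_c({\sim}b)$. Injectivity of $\sigma$ is already known, so only preservation of the operator needs checking. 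Fix $x\in\mathcal{L}$. We must compare
\[
\sigma(f_c(x))=\bigl(f_c(x)\cup c,\ {\sim}f_c(x)\cup c\bigr)
\quad\text{with}\quad
f_K(\sigma(x))=\bigl(f_c(x\cup c),\ g_c({\sim}x\cup c)\bigr),
\]
and we treat the two coordinates separately.

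For the first coordinate we apply \eqref{fc6} directly: $f_c(x\cup c)=f_c(x)\cup c$.

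For the second coordinate we first unfold $g_c$ and use that ${\sim}$ is an involutive dual lattice automorphism with ${\sim}c=c$ (since $c$ is a center). This gives
\[
g_c({\sim}x\cup c)={\sim}f_c\bigl({\sim}({\sim}x\cup c)\bigr)={\sim}f_c(x\cap c).
\]
Next, \eqref{fc5} gives $f_c(x\cap c)=f_c(x)\cap c$. Applying De Morgan for ${\sim}$ once more yields
\[
{\sim}(f_c(x)\cap c)={\sim}f_c(x)\cup c,
\]
which is exactly the second coordinate of $\sigma(f_c(x))$.

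The main point needing care is the bookkeeping of which negation is used in the second coordinate. Here $g_c$ is defined via the involution ${\sim}$ of $\boldsymbol{\mathfrak{L}}$, as in Proposition~\ref{proposition5.7.}, and the computation has to be carried out inside $\boldsymbol{\mathfrak{L}}$ with that convention. If one instead read $g_c$ through the relative negation ${\sim}_c x=x\leadsto c$ of $\mathcal{C}(\boldsymbol{\mathfrak{L}})$, one would first have to check that the two descriptions agree on the relevant elements. I would therefore keep to the definition of $g_c$ in Proposition~\ref{proposition5.7.}, under which the argument reduces to the two axioms \eqref{fc5} and \eqref{fc6} together with ${\sim}c=c$.

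Since the Frobenius left adjoint on $\mathbf{K}(\mathcal{C}(\boldsymbol{\mathfrak{L}}))$ is definable as $g_K={\sim}f_K{\sim}$, and $\sigma$ preserves ${\sim}$, preservation of $g_K$ follows immediately once $f_c$ is shown to be preserved.
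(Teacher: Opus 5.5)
Your proof is correct and follows the paper's coordinatewise scheme; in both arguments the first coordinate is settled by \eqref{fc6}. The routes differ only in the second coordinate.

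The paper rewrites ${\sim}f_c(x)$ as $g_c({\sim}x)$ and observes $g_c(c)=c$. It then invokes join-preservation of $g_c$ (property \ref{FARL15}) to get $g_c({\sim}x)\cup c=g_c({\sim}x\cup c)$. You instead push ${\sim}$ through by De Morgan and apply the axiom \eqref{fc5} to $f_c(x\cap c)$.

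The instance of join-preservation the paper needs, $g_c(a\cup c)=g_c(a)\cup c$ with $g_c={\sim}f_c{\sim}$, is exactly the ${\sim}$-dual of \eqref{fc5}. So the two computations are mirror images, but yours is more self-contained. Property \ref{FARL15} is stated for members of $\FARL$. Here it is applied to $g_c$ at the argument ${\sim}x$, which generally lies outside the cone $\mathcal{C}(\boldsymbol{\mathfrak{L}})$, the only place where Proposition~\ref{proposition5.7.} establishes the Frobenius-adjoint structure. Justifying it on all of $\boldsymbol{\mathfrak{L}}$ therefore needs an extra, unstated step: $(\boldsymbol{\mathfrak{L}},f_c)\in\FAIRL$, since \eqref{fc2}--\eqref{fc4} are the defining conditions of $\FAIRL$ with $\neg={\sim}$. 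Only then does Theorem~\ref{thm:FAIRL} make $g_c$ a left adjoint of $f_c$ there.

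Your version uses only \eqref{fc5}, \eqref{fc6} and the fact that ${\sim}$ is an involutive dual lattice automorphism fixing $c$, so it avoids this detour.
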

\begin{proof}
First, note that $g_c(c) = c$. Let $x \in \mathcal{L}$. Then from Definition \ref{def:centered-FA}, the definition of $g_c$ and \ref{FARL15} of Proposition \ref{prop:FARL-basic-properties}, we have:
\begin{eqnarray*}
\sigma(f_c(x)) &=& (f_c(x) \cup c, \sim f_c (x) \cup c) \\
&=& (f_c (x \cup c), g_c(\sim x) \cup c) \\
&=& (f_c(x \cup c), g_c(\sim x \cup c)) \\
&=& f_K(x \cup c, \sim x \cup c) \\
&=& f_K(\sigma(x)).
\end{eqnarray*}
Thus,
\begin{center}
$\sigma(f_c(x)) = f_K(\sigma(x))$.
\end{center}
This shows that the map $\sigma$ preserves the center Frobenius right adjoint operator.
\end{proof}

\begin{proposition}\label{proposition5.14}\cite{Castiglionib}  A c-differential residuated distributive lattice $\boldsymbol{\mathfrak{L}}$ satisfies $\mathbf{CK}$ if and only if $\sigma$ is surjective.
\end{proposition}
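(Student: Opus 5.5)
The statement to be proved is Proposition~\ref{proposition5.14}: a c-differential residuated distributive lattice $\boldsymbol{\mathfrak{L}}$ satisfies $\mathbf{CK}$ iff $\sigma:\boldsymbol{\mathfrak{L}}\to\mathbf{K}(\mathcal{C}(\boldsymbol{\mathfrak{L}}))$, $\sigma(x)=(x\cup c,{\sim}x\cup c)$, is surjective. The plan is to identify the carrier of $\mathbf{K}(\mathcal{C}(\boldsymbol{\mathfrak{L}}))$ exactly and observe that surjectivity of $\sigma$ restates $\mathbf{CK}$ verbatim. Injectivity and the homomorphism property are already available from \cite{Castiglionib}, so only the set-theoretic image matters.

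\emph{Step 1 (carrier of $\mathbf{K}(\mathcal{C}(\boldsymbol{\mathfrak{L}}))$).} In $\mathcal{C}(\boldsymbol{\mathfrak{L}})$ the bottom element is $c$ and the product is $x\otimes_c y=(x\otimes y)\cup c$. Hence, for $x,y\geq c$,
\[
x\otimes_c y=c \iff x\otimes y\leq c .
\]
So the carrier of $\mathbf{K}(\mathcal{C}(\boldsymbol{\mathfrak{L}}))$ is precisely the set of pairs $(x,y)$ with $x,y\geq c$ and $x\otimes y\leq c$. These are exactly the hypotheses of $\mathbf{CK}$.

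\emph{Step 2 ($\sigma$ lands in the carrier).} Both components of $\sigma(z)$ lie above $c$. Checking that $(z\cup c)\otimes({\sim}z\cup c)\leq c$ is the one nontrivial point, and it is already implicit in $\sigma$ being well defined \cite{Castiglionib}. To verify it directly, expand by distributivity of $\otimes$ over $\cup$ into four terms:
\begin{itemize}
\item $z\otimes{\sim}z\leq c$, via residuation, since $z\otimes{\sim}z\leq 0$.
\item $c\otimes c\leq c$, since ${\sim}c=c$ gives $c\otimes c\leq {\sim}c$.
\item $z\otimes c\leq c$ and ${\sim}z\otimes c\leq c$, each by the Leibniz identity.
\end{itemize}
Either way, this step is part of the known result and need not be redone.

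\emph{Step 3 (both directions).} A pair $(x,y)$ in the carrier lies in the image of $\sigma$ iff there is $z$ with $z\cup c=x$ and ${\sim}z\cup c=y$.
\begin{itemize}
\item If $\sigma$ is surjective, then every $x,y\geq c$ with $x\otimes y\leq c$ gives a carrier element by Step 1, and any preimage $z$ witnesses $\mathbf{CK}$.
\item Conversely, $\mathbf{CK}$ applied to an arbitrary carrier element produces a preimage, so $\sigma$ is surjective.
\end{itemize}

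The only step requiring care is Step 1, namely the translation between $x\otimes_c y=c$ and $x\otimes y\leq c$. It uses only that $c$ is the bottom of $\mathcal{C}(\boldsymbol{\mathfrak{L}})$ and that $a\cup c=c$ iff $a\leq c$. Everything else is unpacking definitions.
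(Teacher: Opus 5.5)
Your proposal is correct: since the bottom of $\mathcal{C}(\boldsymbol{\mathfrak{L}})$ is $c$ and $x\otimes_c y=(x\otimes y)\cup c$, the carrier of $\mathbf{K}(\mathcal{C}(\boldsymbol{\mathfrak{L}}))$ is exactly the set of pairs with $x,y\geq c$ and $x\otimes y\leq c$. Surjectivity of $\sigma$ is therefore $\mathbf{CK}$ verbatim.

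The paper gives no argument of its own beyond citing Castiglioni, Menni and Sagastume, and your definitional unpacking is the natural proof. One small simplification: in Step 2 the bounds $c\otimes c\leq c$, $z\otimes c\leq c$ and ${\sim}z\otimes c\leq c$ follow at once from integrality ($1$ is the top, so $a\otimes b\leq a\cap b$). You do not need ${\sim}c=c$ or the Leibniz identity there.
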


Here we denote by $\mathbb{FAD\text{iff}RDL'}$ the full subcategory of $\mathbb{FAD\text{iff}RDL}$ whose objects satisfy $\mathbf{CK}$. As a consequence of the previous results, we are ready to present the main result of the paper.
\begin{theorem}\label{theorem5.15}
The functors
\[
\mathbf{K}:\mathbb{FARDL}\longrightarrow\mathbb{FAD\text{iff}RDL'}
\qquad\text{and}\qquad
\mathbf{C}:\mathbb{FAD\text{iff}RDL'}\longrightarrow\mathbb{FARDL}
\]
are quasi-inverse. More precisely, the families $\rho$ and $\sigma$ defined above form natural isomorphisms
\[
\rho:\operatorname{Id}_{\mathbb{FARDL}}
   \Longrightarrow \mathbf{C}\mathbf{K}
\qquad\text{and}\qquad
\sigma:\operatorname{Id}_{\mathbb{FAD\text{iff}RDL'}}
   \Longrightarrow \mathbf{K}\mathbf{C}.
\]
Consequently,
\[
\mathbb{FARDL}\simeq\mathbb{FAD\text{iff}RDL'}.
\]
\end{theorem}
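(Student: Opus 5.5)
We reduce everything to the classical equivalence $\mathbf{C}\dashv\mathbf{K}:\mathbb{RDL}\rightarrow\mathbb{D\text{iff}RDL'}$ of \cite{Castiglionib}, Corollary 7.8, and only check the extra Frobenius structure. The underlying functors, the components $\rho_{\mathbf L}(x)=(x,0)$ and $\sigma_{\boldsymbol{\mathfrak{L}}}(x)=(x\cup c,{\sim}x\cup c)$, and their naturality as maps of residuated (resp. c-differential) distributive lattices are already known. Hence it suffices to show four things. First, $\mathbf{K}$ and $\mathbf{C}$ are well defined on objects and morphisms of the Frobenius-adjoint categories. Second, each component $\rho_{\mathbf L}$ and $\sigma_{\boldsymbol{\mathfrak{L}}}$ is an isomorphism in the enriched category. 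Third, naturality still holds; this is automatic, since it is an equation between underlying maps and the forgetful functors are faithful. Finally, inverses of bijective homomorphisms preserving $f,g$ (resp. $f_c$) again preserve them.

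\emph{Well-definedness.} For an object $(\mathbf L,f,g)$ of $\mathbb{FARDL}$, Theorem~\ref{theorem4.1} (items \ref{fK5}, \ref{fK8}, \ref{fK12}, \ref{fK15}, \ref{fK16}, \ref{fK3}) shows that $(\mathbf{K}(\mathbf L),f_K)$ satisfies \eqref{fc1}--\eqref{fc6}. It also satisfies $\mathbf{CK}$, since $\mathbf{K}(\mathbf L)\in\mathbb{D\text{iff}RDL'}$ by the classical result. For a morphism $\phi$, we check $\mathbf{K}_\phi f_K=f'_K\mathbf{K}_\phi$ directly:
\[
\mathbf{K}_\phi(f(x),g(y))=(\phi f(x),\phi g(y))=(f'\phi(x),g'\phi(y)).
\]
Here one uses that $\phi$ preserves $f$ and $g$. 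In the other direction, Proposition~\ref{proposition5.7.} gives $\mathbf{C}(\boldsymbol{\mathfrak L},f_c)=(\mathcal C(\boldsymbol{\mathfrak L}),f_c,g_c)\in\mathbb{FARDL}$, and it also shows that $\mathbf{C}_\epsilon$ is a morphism of $\mathbb{FARDL}$. For the latter, we note that $\epsilon$ preserves ${\sim}$ and $f_c$, hence also $g_c={\sim}f_c{\sim}$.

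\emph{Components.} Proposition~\ref{proposition5.12.} states exactly that $\rho_{\mathbf L}$ is an isomorphism in $\mathbb{FARDL}$ onto $\mathbf{C}\mathbf{K}(\mathbf L)$. For $\sigma$, Proposition~\ref{proposition5.13} gives $\sigma f_c=f_K\sigma$, where $f_K$ is computed from $(\mathcal C(\boldsymbol{\mathfrak L}),f_c,g_c)$. Proposition~\ref{proposition5.14} together with $\mathbf{CK}$ gives bijectivity. An inverse of a bijective map commuting with the operations also commutes with them, so $\sigma_{\boldsymbol{\mathfrak L}}$ is an isomorphism in $\mathbb{FAD\text{iff}RDL'}$.

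\emph{Conclusion.} Naturality of $\rho$ and $\sigma$ is inherited from \cite{Castiglionib}. Therefore $\rho:\Id\Rightarrow\mathbf{C}\mathbf{K}$ and $\sigma:\Id\Rightarrow\mathbf{K}\mathbf{C}$ are natural isomorphisms, which yields the equivalence.

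\textbf{Main obstacle.} We expect the main obstacle to be the computation behind Proposition~\ref{proposition5.13}, namely the identity
\[
{\sim}f_c(x)\cup c=g_c({\sim}x\cup c)
\]
in $\mathcal C(\boldsymbol{\mathfrak L})$. This identity requires care, because $g_c$ on the cone is defined via the cone negation $x\leadsto c$ rather than ${\sim}$. We would verify it first, using \eqref{fc5}, \eqref{fc6}, distributivity and the Leibniz law, before assembling the rest.
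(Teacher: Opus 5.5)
Your proposal is correct and follows the paper's proof essentially step for step: functoriality comes from Theorem~\ref{theorem4.1} and Proposition~\ref{proposition5.7.}, and the components from Propositions~\ref{proposition5.12.}--\ref{proposition5.14}. The one difference is that you inherit naturality from the classical equivalence via faithfulness of the forgetful functors, whereas the paper recomputes the two naturality squares directly.

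The obstacle you anticipate is not real. The paper defines $g_c={\sim}f_c{\sim}$, not via $x\leadsto c$; the cone-negation version would give $\neg\neg g$ rather than $g$ in $\mathbf{K}(\mathbf{L})$. With the paper's definition, $g_c({\sim}x\cup c)={\sim}f_c(x\cap c)={\sim}(f_c(x)\cap c)={\sim}f_c(x)\cup c$ follows at once from $(f_c5)$ and ${\sim}c=c$, with no need for distributivity or the Leibniz law.
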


\begin{proof}
First, we verify that the two constructions considered above restrict to the stated categories. The coordinatewise assignment $\mathbf{K}$ defines a functor from $\mathbb{FARDL}$ to $\mathbb{FAD\text{iff}RDL}$. Since the underlying Kalman algebra $\mathbf{K}(\mathbf{L})$ belongs to $\mathbb{D\text{iff}RDL'}$ for every residuated distributive lattice $\mathbf{L}$, the expanded algebra $(\mathbf{K}(\mathbf{L}),f_K)$ satisfies $\mathbf{CK}$ as well. Hence $\mathbf{K}$ restricts to a functor
\[
\mathbf{K}:\mathbb{FARDL}\longrightarrow\mathbb{FAD\text{iff}RDL'}.
\]
Proposition~\ref{proposition5.7.} also shows that the positive-cone construction gives a functor
\[
\mathbf{C}:\mathbb{FAD\text{iff}RDL'}\longrightarrow\mathbb{FARDL}.
\]

Let $(\mathbf{L},f,g)\in\mathbb{FARDL}$. Then by Proposition~\ref{proposition5.12.}, the map
\[
\rho_{\mathbf{L}}:(\mathbf{L},f,g)
   \longrightarrow \mathbf{C}\mathbf{K}(\mathbf{L},f,g),
\qquad
\rho_{\mathbf{L}}(x)=(x,0),
\]
is an isomorphism in $\mathbb{FARDL}$. We claim that the family
$\rho=\{\rho_{\mathbf{L}}\}$ is natural. Let
\[
\phi:(\mathbf{L}_1,f_1,g_1)\longrightarrow(\mathbf{L}_2,f_2,g_2)
\]
be a morphism in $\mathbb{FARDL}$. Since $\mathbf{K}_{\phi}$ acts coordinatewise and $\phi(0)=0$, for every $x\in L_1$ we have
\[
\begin{aligned}
(\mathbf{C}\mathbf{K}\phi)\bigl(\rho_{\mathbf{L}_1}(x)\bigr)
  &=(\mathbf{C}\mathbf{K}\phi)(x,0)\\
  &=(\phi(x),\phi(0))\\
  &=(\phi(x),0)\\
  &=\rho_{\mathbf{L}_2}(\phi(x)).
\end{aligned}
\]
Therefore,
\[
\mathbf{C}\mathbf{K}\phi\circ\rho_{\mathbf{L}_1}
   =\rho_{\mathbf{L}_2}\circ\phi,
\]
so that
\[
\rho:\operatorname{Id}_{\mathbb{FARDL}}
   \Longrightarrow\mathbf{C}\mathbf{K}
\]
is a natural isomorphism.

Now let $(\boldsymbol{\mathfrak{L}},f_c)\in\mathbb{FAD\text{iff}RDL'}$. Recall that
\[
\sigma_{\boldsymbol{\mathfrak{L}}}:\boldsymbol{\mathfrak{L}}
   \longrightarrow\mathbf{K}\mathbf{C}(\boldsymbol{\mathfrak{L}}),
\qquad
\sigma_{\boldsymbol{\mathfrak{L}}}(x)
   =(x\cup c,\sim x\cup c),
\]
is an injective homomorphism of $c$-differential residuated distributive lattices. By Proposition~\ref{proposition5.13}, it also preserves the center Frobenius right adjoint operator $f_c$. Since $\boldsymbol{\mathfrak{L}}$ satisfies $\mathbf{CK}$, Proposition~\ref{proposition5.14} yields the surjectivity of $\sigma_{\boldsymbol{\mathfrak{L}}}$. Hence $\sigma_{\boldsymbol{\mathfrak{L}}}$ is an isomorphism in $\mathbb{FAD\text{iff}RDL'}$.

Here it remains to prove naturality. Let
\[
\epsilon:(\boldsymbol{\mathfrak{L}}_1,f_{c,1})
  \longrightarrow(\boldsymbol{\mathfrak{L}}_2,f_{c,2})
\]
be a morphism in $\mathbb{FAD\text{iff}RDL'}$. Since $\epsilon$ preserves $\cup$, $\sim$, and the distinguished center $c$, while $\mathbf{K}\mathbf{C}\epsilon$ acts coordinatewise, for every $x\in\mathcal{L}_1$ we obtain
\[
\begin{aligned}
(\mathbf{K}\mathbf{C}\epsilon)
   \bigl(\sigma_{\boldsymbol{\mathfrak{L}}_1}(x)\bigr)
  &=(\mathbf{K}\mathbf{C}\epsilon)(x\cup c,\sim x\cup c)\\
  &=\bigl(\epsilon(x\cup c),\epsilon(\sim x\cup c)\bigr)\\
  &=\bigl(\epsilon(x)\cup c,\sim\epsilon(x)\cup c\bigr)\\
  &=\sigma_{\boldsymbol{\mathfrak{L}}_2}(\epsilon(x)).
\end{aligned}
\]
Thus,
\[
\mathbf{K}\mathbf{C}\epsilon\circ\sigma_{\boldsymbol{\mathfrak{L}}_1}
   =\sigma_{\boldsymbol{\mathfrak{L}}_2}\circ\epsilon,
\]
which implies that
\[
\sigma:\operatorname{Id}_{\mathbb{FAD\text{iff}RDL'}}
   \Longrightarrow\mathbf{K}\mathbf{C}
\]
is a natural isomorphism.

We have therefore obtained natural isomorphisms
\[
\operatorname{Id}_{\mathbb{FARDL}}\cong\mathbf{C}\mathbf{K}
\qquad\text{and}\qquad
\operatorname{Id}_{\mathbb{FAD\text{iff}RDL'}}\cong\mathbf{K}\mathbf{C}.
\]
Therefore $\mathbf{K}$ and $\mathbf{C}$ are quasi-inverse functors and establish the categorical equivalence.
\end{proof}

\begin{corollary}\label{corollary5.16}
    The functor $\mathbb{FARDL}\vdash \mathbb{RDL}$ followed by $\mathbf{K}:\mathbb{RDL}\vdash \mathbb{D\text{iff}RDL'}$ factors through the forgetful $\mathbf{F}:\mathbb {FAD\text{iff}RDL'}\vdash  \mathbb{D\text{iff}RDL'}$.
\end{corollary}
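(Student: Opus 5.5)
The claim is that the square of functors commutes up to the forgetful functors:
\[
\mathbf{K}_{\mathbb{RDL}}\circ U=\mathbf{F}\circ\mathbf{K}_{\mathbb{FARDL}},
\]
where $U:\mathbb{FARDL}\to\mathbb{RDL}$ forgets $f,g$ and $\mathbf{F}:\mathbb{FAD\text{iff}RDL'}\to\mathbb{D\text{iff}RDL'}$ forgets $f_c$. I will check this separately on objects and on morphisms. Since all functors involved are concrete, the verification reduces to comparing underlying algebras and underlying maps.

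On objects, let $(\mathbf{L},f,g)\in\mathbb{FARDL}$. By Theorem~\ref{theorem4.6}(1) and Theorem~\ref{theorem5.15}, the extended functor sends it to $(\mathbf{K}(\mathbf{L}),f_K)$. Here $\mathbf{K}(\mathbf{L})$ is built from the residuated distributive lattice reduct of $\mathbf{L}$ alone: its universe and operations $\cap,\cup,\otimes,\leadsto,\sim$ and constants $(0,1),(0,0),(1,0)$ are defined in Section~\ref{section:preliminaries-RL} without reference to $f$ or $g$. Hence $\mathbf{F}(\mathbf{K}(\mathbf{L}),f_K)=\mathbf{K}(\mathbf{L})=\mathbf{K}(U(\mathbf{L},f,g))$. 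This object lies in $\mathbb{D\text{iff}RDL'}$, because the underlying Kalman algebra always satisfies $\mathbf{CK}$, as recalled in the proof of Theorem~\ref{theorem5.15}.

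On morphisms, a morphism $\phi$ in $\mathbb{FARDL}$ is in particular an $\mathbb{RDL}$-homomorphism $U\phi=\phi$. Both $\mathbf{K}\phi$ in the extended sense and $\mathbf{K}(U\phi)$ in the sense of \cite{Castiglionib} are the coordinatewise map $(x,y)\mapsto(\phi(x),\phi(y))$. Therefore $\mathbf{F}(\mathbf{K}\phi)=\mathbf{K}(U\phi)$ as maps of sets, and hence as morphisms. Functoriality of both composites is inherited, so the factorization holds on the nose, not merely up to natural isomorphism.

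The only nontrivial point is ensuring that the extended $\mathbf{K}\phi$ really is a morphism in $\mathbb{FAD\text{iff}RDL'}$, i.e.\ that it preserves $f_K$. This check is needed for the factorization to live in the right category. I will verify it directly:
\[
\mathbf{K}\phi(f_{K_1}(x,y))=(\phi f_1(x),\phi g_1(y))=(f_2\phi(x),g_2\phi(y))=f_{K_2}(\mathbf{K}\phi(x,y)),
\]
using that $\phi$ preserves both $f$ and $g$. Everything else is bookkeeping.
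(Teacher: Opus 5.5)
Your proposal is correct and follows the paper's approach: the paper only says the proof is straightforward, and you carry out that routine check. The extended $\mathbf K$ leaves the underlying Kalman algebra and the coordinatewise action on morphisms unchanged, so $\mathbf F\circ\mathbf K=\mathbf K\circ U$ holds on the nose; your explicit verification that $\mathbf K\phi$ preserves $f_K$, together with Theorem~\ref{theorem4.1} for $(f_c1)$--$(f_c6)$, supplies what the paper leaves as ``easy to check'' in Subsection~\ref{subsection:CAE-FARL}.
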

\begin{proof} The proof is straightforward.
\end{proof}

As a consequence of Theorem \ref{theorem5.15} and Corollary \ref{corollary5.16}, we obtain a commutative diagram:
\begin{center}
\begin{tikzpicture}
  \node[inner sep=2pt] (MV^bullet) at (0,0) {$\mathbb{RDL}$};
  \node[inner sep=2pt] (MV) at (0,3) {$\mathbb{FARDL}$ };
  \node[inner sep=2pt] (MDRL) at (3,0) {$\mathbb{D\text{iff}RDL'}$};
  \node[inner sep=2pt] (iIRL_0) at (3,3) {$\mathbb{FAD\text{iff}RDL'}$};  
\draw[->] ([yshift=3pt]MV.east) -- ([yshift=3pt]iIRL_0.west) node[midway, above=3pt] {$\mathbf{K}$};
\draw[<-] ([yshift=-3pt]MV.east) -- ([yshift=-3pt]iIRL_0.west) node[midway, below=3pt] {$\mathbf{C}$};
\draw[->] ([yshift=3pt]MDRL.west) -- ([yshift=3pt]MV^bullet.east) node[midway, above=3pt] {$\mathbf{K}$};
\draw[<-] ([yshift=-3pt]MDRL.west) -- ([yshift=-3pt]MV^bullet.east) node[midway, below=3pt] {$\mathbf{C}$};

\draw[->] (MV) -- (MV^bullet) node[midway, left=2pt] {$\mathbf{F}$};
  \draw[->] (iIRL_0) -- (MDRL) node[midway, right=2pt] {$\mathbf{F}$};
\end{tikzpicture} 
\begin{center}  {\bf{Figure 4.}} Diagram of the functors $\mathbf{K}$ and $\mathbf{C}$ relate the categories $\mathbb{FARDL}$ and $\mathbb{FAD\text{iff}RDL'}$ .
\end{center}
\end{center}

\begin{corollary}\label{cor:representation}
Every Frobenius-adjoint residuated distributive lattice is isomorphic
to the positive cone of a centered Frobenius-adjoint c-differential residuated
distributive lattice satisfying $\mathbf{CK}$.  Conversely, every object of
$\CFADp$ is isomorphic to the Kalman algebra of its positive cone.
\end{corollary}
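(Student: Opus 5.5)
The plan is to read both halves of Corollary~\ref{cor:representation} off the two natural isomorphisms of Theorem~\ref{theorem5.15}.

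For the first assertion, let $(\mathbf{L},f,g)\in\FARDL$. Theorem~\ref{theorem5.15} shows that $\mathbf{K}$ restricts to a functor into $\CFADp$. Hence $(\mathbf{K}(\mathbf{L}),f_K)$ is a Frobenius-adjoint $c$-differential residuated distributive lattice satisfying $\mathbf{CK}$. Its centre is $c=(0,0)$, and Theorem~\ref{theorem4.1}\ref{fK3} gives $f_K(c)=c$, so it is centered in the required sense. Proposition~\ref{proposition5.12.} then supplies the isomorphism $\rho(x)=(x,0)$ from $(\mathbf{L},f,g)$ onto $(\mathcal{C}(\mathbf{K}(\mathbf{L})),f_K,g_K)$. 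This target is precisely the positive cone $\mathbf{C}\mathbf{K}(\mathbf{L},f,g)$, equipped with the operators produced in Proposition~\ref{proposition5.7.}.

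Two routine checks remain here. First, the structure $\mathbf{C}$ places on the cone, namely $f_c$ restricted to $\mathcal{C}$ and $g_c=\sim f_c\sim$, must coincide with $(f_K,g_K)$ restricted to the cone. For $(x,0)$ we have
\[
\sim f_K(\sim(x,0))=\sim f_K(0,x)=\sim(0,g(x))=(g(x),0),
\]
using $f(0)=0$, so the two agree. Second, $\rho$ must respect the cone's bottom $c$ and product $\otimes_c$; this is part of the cited result of Castiglioni et al.

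For the converse, let $(\boldsymbol{\mathfrak{L}},f_c)\in\CFADp$. By Proposition~\ref{proposition5.7.}, its positive cone $\mathbf{C}(\boldsymbol{\mathfrak{L}},f_c)$ lies in $\FARDL$, so its Kalman algebra $\mathbf{K}\mathbf{C}(\boldsymbol{\mathfrak{L}},f_c)$ is defined. The map $\sigma(x)=(x\cup c,\sim x\cup c)$ is an injective homomorphism of $c$-differential residuated distributive lattices. It preserves the Frobenius operator by Proposition~\ref{proposition5.13}, and it is surjective by Proposition~\ref{proposition5.14}, because $\boldsymbol{\mathfrak{L}}$ satisfies $\mathbf{CK}$. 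A bijective homomorphism of algebras is an isomorphism, since its inverse is automatically a homomorphism. So $\sigma$ is the required isomorphism, which is just the component $\sigma_{\boldsymbol{\mathfrak{L}}}$ of the natural isomorphism in Theorem~\ref{theorem5.15}.

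The only step that is not pure bookkeeping is checking that $\sigma$ carries $f_c$ to the operator $f_K$ of $\mathbf{K}\mathbf{C}(\boldsymbol{\mathfrak{L}})$. That operator is built from $f_c$ and $g_c$ restricted to the cone. This requires the identity
\[
\sim f_c(x)\cup c \;=\; g_c(\sim x\cup c).
\]
I would derive it from the definition $g_c=\sim f_c\sim$ together with \eqref{fc5}, which gives
\[
g_c(\sim x\cup c)=\sim f_c(x\cap c)=\sim\bigl(f_c(x)\cap c\bigr)=\sim f_c(x)\cup c,
\]
since $\sim$ is a dual lattice automorphism fixing $c$. This is exactly the step used in Proposition~\ref{proposition5.13}, so the corollary follows once it is accepted.
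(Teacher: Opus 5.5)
Your proposal is correct and follows the paper's own proof. Both halves are read off from the components $\rho$ (Proposition~\ref{proposition5.12.}) and $\sigma$ (Propositions~\ref{proposition5.13} and~\ref{proposition5.14}) of the natural isomorphisms in Theorem~\ref{theorem5.15}. Your extra checks are sound. The only minor deviation is that you derive $\sim f_c(x)\cup c=g_c(\sim x\cup c)$ directly from \eqref{fc5}, whereas Proposition~\ref{proposition5.13} obtains it from join-preservation of $g_c$ together with $g_c(c)=c$.
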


\begin{proof}
Let $(\mathbf{L},f,g)\in\FARDL$.  By Theorem~\ref{theorem4.6},
$(\Kfun(\mathbf{L}),f_K)\in\CFADp$, and the component
\[
 \rho:(\mathbf{L},f,g)\longrightarrow\Cfun(\Kfun(\mathbf{L}),f_K)
\]
is an isomorphism.  This proves the first assertion.

Conversely, let $\boldsymbol{(\mathfrak{L}},f_c)\in\CFADp$.  The component
\[
 \sigma:
 \boldsymbol{(\mathfrak{L}},f_c)
 \longrightarrow
 \Kfun\Cfun\boldsymbol{(\mathfrak{L}},f_c)
\]
is an isomorphism by Theorem~\ref{theorem5.15}. 
\end{proof}

Because finite-model questions are important for the logic introduced earlier,
we also record how finiteness behaves under the equivalence. 
\begin{corollary}\label{cor:finite}
The equivalence of Theorem~\ref{theorem5.15} restricts to the
finite full subcategories.

 $(1)$~ $(\mathbf{L},f,g)$ is finite if and only if $(\Kfun(\mathbf{L}),f_K)$ is finite;
 
 $(2)$~an algebra $(\boldsymbol{\mathfrak{L}},f_c)\in\CFADp$ is finite if and only if
       its positive cone $\Cfun(\boldsymbol{\mathfrak{L}},f_c)$ is finite.
\end{corollary}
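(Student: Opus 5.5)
The claim reduces to two cardinality estimates, one for each functor, together with the natural isomorphisms $\rho$ and $\sigma$ of Theorem~\ref{theorem5.15}. Since isomorphisms preserve finiteness, and $\rho$, $\sigma$ are components of natural isomorphisms whose domains and codomains are the objects in question, the restriction of $\Kfun$ and $\Cfun$ to finite full subcategories follows once (1) and (2) are established. Because both subcategories are full, the quasi-inverse data restrict without change.

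For (1), let $(\mathbf{L},f,g)\in\FARDL$. If $L$ is finite, then $\Kfun(L)\subseteq L\times L^{o}$, so $|\Kfun(L)|\le |L|^2<\infty$. The operator $f_K$ plays no role in cardinality. Conversely, suppose $\Kfun(L)$ is finite. The map $x\mapsto (x,0)$ is injective from $L$ into $\Kfun(L)$, since $x\odot 0=0$ holds in every residuated lattice. This is exactly $\rho$ of Proposition~\ref{proposition5.12.}, viewed as a map into $\Kfun(L)$. Hence $|L|\le|\Kfun(L)|$, so $L$ is finite.

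For (2), let $(\boldsymbol{\mathfrak{L}},f_c)\in\CFADp$. If $\mathcal{L}$ is finite, then $\mathcal{C}(\mathcal{L})=\{x\in\mathcal{L}\mid x\ge c\}\subseteq\mathcal{L}$ is finite. Conversely, suppose $\mathcal{C}(\mathcal{L})$ is finite. The map $\sigma(x)=(x\cup c,{\sim}x\cup c)$ sends $\mathcal{L}$ into $\mathcal{C}(\mathcal{L})\times\mathcal{C}(\mathcal{L})$, and it is injective by the result of \cite{Castiglionib} recalled before Proposition~\ref{proposition5.13}. Hence $|\mathcal{L}|\le|\mathcal{C}(\mathcal{L})|^2$. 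Surjectivity of $\sigma$, given by $\mathbf{CK}$ through Proposition~\ref{proposition5.14}, is not needed here; injectivity alone suffices.

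The only point that needs care is the injectivity of $\sigma$. The source states it, but it can be checked directly if desired. Suppose $x\cup c=y\cup c$ and ${\sim}x\cup c={\sim}y\cup c$. Applying ${\sim}$ to the second equation gives $x\cap c=y\cap c$. Distributivity then yields $x=y$, by the same cancellation argument used at the end of the proof of Theorem~\ref{theorem4.6}. With this in place, the corollary follows immediately.
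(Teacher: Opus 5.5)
Your proof is correct and follows the paper's own argument. The forward directions are the subset bounds $\Kfun(L)\subseteq L\times L$ and $\mathcal{C}(\mathcal{L})\subseteq\mathcal{L}$, and the converses come from $\rho$ embedding $L$ into $\Kfun(L)$ and from $\sigma$ embedding $\mathcal{L}$ into $\mathcal{C}(\mathcal{L})^2$.

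The only difference is that you use just the injectivity of $\sigma$, which you correctly verify via ${\sim}c=c$ and distributive cancellation, so (2) holds even without $\mathbf{CK}$. The paper instead invokes the full isomorphism $\sigma$, citing Proposition~\ref{proposition5.13}, although that isomorphism actually comes from Proposition~\ref{proposition5.14} and Theorem~\ref{theorem5.15}.
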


\begin{proof}
(1)~If $\mathbf{L}$ is finite, then $\Kfun(\mathbf{L})\subseteq \mathbf{L}\times \mathbf{L}$ is finite. 

 Conversely,
if $\Kfun(\mathbf{L})$ is finite, then its positive cone is finite, and
\[
 \mathbf{L}\cong\Cfun\Kfun(\mathbf{L})
\]
by Proposition~\ref{proposition5.12.}, hence $\mathbf{L}$ is finite.

(2)~Let $(\boldsymbol{\mathfrak{L}},f_c)\in\CFADp$.  If $\boldsymbol{\mathfrak{L}}$ is
finite, then its subset $\Cfun(\boldsymbol{\mathfrak{L}},f_c)$ is finite.  

Conversely,
if the positive cone is finite, then
\[
 \Kfun\Cfun(\boldsymbol{\mathfrak{L}},f_c)
 \subseteq
 \Cfun(\boldsymbol{\mathfrak{L}},f_c)^2
\]
is finite.  Since $(\boldsymbol{\mathfrak{L}},f_c)$ is isomorphic to the Kalman algebra
by Proposition~\ref{proposition5.13}, it is also finite.
\end{proof}

\section{Logical consequence through positive cones}\label{sec:positive-cones}

\renewcommand{\CFAD}{\mathbb{FA}\text{-}c\mathbb{DRDL}}
\renewcommand{\CFADp}{\mathbb{FA}\text{-}c\mathbb{DRDL}^{\prime}}

The categorical equivalence established in Section~\ref{section4.} has a
precise logical counterpart.  Its source category is \(\FARDL\), whose
objects are Frobenius-adjoint residuated distributive lattices
\((\mathbf L,f,g)\), and its target category is \(\CFADp\), whose objects are
Frobenius-adjoint \(c\)-differential residuated distributive lattices
\((\boldsymbol{\mathfrak L},f_c)\) satisfying \(\mathbf{CK}\).  The
residuated lattice is assumed to be distributive, and the logic
corresponding to the equivalence is obtained from \(\FLFGC\) by adding
lattice distributivity.  

\subsection{The distributive extension of the logic \(\mathbf{FL}^{\mathrm{FGC}}_{ew}\)}
\label{subsec:distributive-calculus}

In this subsection, we introduce the logic \(\mathbf{FL}^{\mathrm{FGC},d}_{ew}\), which is the distributive extension of \(\mathbf{FL}^{\mathrm{FGC}}_{ew}\). Also, we show that \(\mathbf{FL}^{\mathrm{FGC},d}_{ew}\) is an algebraizable logic in the sense of Blok and Pigozzi \cite{BlokPigozzi}.
\medskip

Let \(\mathbf{FL}^{\mathrm{FGC},d}_{ew}\) be the extension of \(\FLFGC\)
obtained by adding the two distributivity schemes
\begin{align}
 \varphi\sqcap(\psi\sqcup\chi)
 &\Leftrightarrow
 (\varphi\sqcap\psi)\sqcup(\varphi\sqcap\chi),
 \tag{D1}\label{eq:dist-one}\\
 \varphi\sqcup(\psi\sqcap\chi)
 &\Leftrightarrow
 (\varphi\sqcup\psi)\sqcap(\varphi\sqcup\chi).
 \tag{D2}\label{eq:dist-two}
\end{align}
The axioms for \(\triangledown\) and \(\blacktriangle\) are exactly
those of Definition~\ref{def:logic-FGC}.  Accordingly,
\(\triangledown\) and \(\blacktriangle\) remain primitive connectives,
interpreted by the right adjoint operator \(f\) and the left adjoint operator \(g\),
respectively.

\begin{proposition}
\label{prop:alg-distributive}
The logic \(\mathbf{FL}^{\mathrm{FGC},d}_{ew}\) is algebraizable with defining
equation
\[
 p\approx1,
\]
and equivalence formula
\[
 p\Leftrightarrow q.
\]
Its equivalent algebraic semantics is \(\FARDL\).  Hence, for every
\(\Gamma\cup\{\varphi\}\subseteq
\mathrm{Fm}_{\mathcal L_{\mathrm{FGC}}}\),
\[
 \Gamma\vdash_{\mathbf{FL}^{\mathrm{FGC},d}_{ew}}\varphi
 \Longleftrightarrow
 \Gamma\models_{\FARDL}\varphi.
\]
\end{proposition}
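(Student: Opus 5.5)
The plan is to repeat the argument of Section~\ref{subsection:algebraizability-FGC} relative to the axiomatic extension, with \eqref{eq:dist-one} and \eqref{eq:dist-two} as the only new ingredients. Blok--Pigozzi algebraizability is preserved under axiomatic extensions: if a logic is algebraizable with translations $\tau,\rho$ and equivalent algebraic semantics $\mathbb K$, then adding axiom schemes $\Phi$ yields an algebraizable logic with the same $\tau,\rho$, whose equivalent algebraic semantics is the subclass of $\mathbb K$ axiomatized by $\{\tau(\varphi):\varphi\in\Phi\}$. By Theorem~\ref{thm:FGC-algebraizability} and Proposition~\ref{prop:identify-KFGC-FARL}, $\FLFGC$ is algebraizable via $\tau(\varphi)=\{\varphi\approx1\}$ and $\rho(s\approx t)=\{s\Leftrightarrow t\}$, with $\FARL$ as its equivalent algebraic semantics. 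The translated distributivity schemes are $(x\wedge(y\vee z))\Leftrightarrow((x\wedge y)\vee(x\wedge z))\approx1$ and its dual. By \eqref{eq:equation-recovery} these are equivalent over $\FARL$ to the two distributive lattice identities, so the resulting subvariety is exactly $\FARDL$.

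To keep the argument self-contained, I would also verify soundness and completeness directly, following Theorems~\ref{thm:FGC-soundness} and \ref{thm:FGC-completeness}. For soundness it suffices to note that in any $\mathscr{L}\in\FARDL$ both sides of (D1) and (D2) receive equal values, so the biconditionals evaluate to $1$. The remaining axioms and rules are handled as in Theorem~\ref{thm:FGC-soundness}.

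For completeness, take $T=\mathrm{Cn}_{\mathbf{FL}^{\mathrm{FGC},d}_{ew}}(\Gamma)$. Lemma~\ref{lem:FGC-Lindenbaum-congruence} still applies, because its proof uses only the $\mathbf{FL}_{ew}$-part and Proposition~\ref{prop:FGC-monotonicity}, and both remain available in any axiomatic extension whose theories are closed under the same rules. Lemma~\ref{lem:FGC-canonical-algebra} then gives $\mathscr{A}_T\in\FARL$. Since every instance of (D1) and (D2) lies in $T$, we get $[\alpha\sqcap(\beta\sqcup\gamma)]_T=[(\alpha\sqcap\beta)\sqcup(\alpha\sqcap\gamma)]_T$, so the lattice reduct of $\mathscr{A}_T$ is distributive and $\mathscr{A}_T\in\FARDL$. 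The canonical valuation $v_T$ then separates $\varphi$ from $\Gamma$ exactly as in Theorem~\ref{thm:FGC-completeness}.

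The recovery conditions \eqref{eq:formula-recovery} and \eqref{eq:equation-recovery} carry over unchanged, since they use only $\mathbf{FL}_{ew}$-theorems and the residuated-lattice reduct. Together these establish algebraizability with $\FARDL$ as the equivalent algebraic semantics, and the displayed equivalence $\Gamma\vdash\varphi\iff\Gamma\models_{\FARDL}\varphi$ follows.

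The step most likely to need care is confirming that the rules $(\mathrm{GC}_1)$, $(\mathrm{GC}_2)$ and $(\mathrm N_\triangledown)$ behave relative to theories as in Lemma~\ref{lem:FGC-canonical-algebra}, that is, as global rules applied inside derivations from $T$. This is inherited verbatim from the paper's treatment, so no new difficulty arises. The only genuinely new check is that distributivity of $\mathscr{A}_T$ follows from the schemes, and that check is immediate.
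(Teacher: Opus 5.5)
Your proposal is correct and follows essentially the same route as the paper. You transfer the algebraizing pair $\tau,\rho$ from $\FLFGC$ to the axiomatic extension, and you note via \eqref{eq:equation-recovery} that the translations of \eqref{eq:dist-one} and \eqref{eq:dist-two} cut out exactly the distributive subvariety $\FARDL$ of $\FARL$; you then confirm completeness through the Lindenbaum--Tarski quotient, whose congruence property rests on Proposition~\ref{prop:FGC-monotonicity}. Your explicit soundness check for the new schemes, and your observation that Lemmas~\ref{lem:FGC-Lindenbaum-congruence} and~\ref{lem:FGC-canonical-algebra} carry over verbatim, only spell out what the paper's proof leaves implicit.
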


\begin{proof}
Section~\ref{section3} proves that \(\FLFGC\) is algebraizable with defining
equation \(p\approx1\), equivalence formula \(p\Leftrightarrow q\), and
equivalent algebraic semantics \(\FARL\).  Under the same algebraizing
translations, \eqref{eq:dist-one} and \eqref{eq:dist-two} translate exactly
into the two distributivity identities of the lattice reduct.  Therefore the
equivalent algebraic semantics of the extension is the distributive
subvariety \(\FARDL\).  Since the added schemes are equational, the original
algebraizing pair is unchanged.

For completeness, let \(T\) be a theory of
\(\mathbf{FL}^{\mathrm{FGC},d}_{ew}\) and define
\[
 \varphi\equiv_T\psi
 \Longleftrightarrow
 T\vdash_{\mathbf{FL}^{\mathrm{FGC},d}_{ew}}
   \varphi\Leftrightarrow\psi.
\]
By Proposition~\ref{prop:FGC-monotonicity}, \(\equiv_T\) is a congruence of
the full formula algebra.  The quotient
\(\mathrm{Fm}_{\mathcal L_{\mathrm{FGC}}}/\!\equiv_T\) satisfies the
Frobenius-adjoint equations and the two distributivity identities; hence it
belongs to \(\FARDL\).  The canonical valuation into this quotient gives the
converse semantic implication.
\end{proof}

\begin{remark}
\label{rem:ambient-involution}
Every \((\boldsymbol{\mathfrak L},f_c)\in\CFADp\) has an involution
\({\sim}\), which does not make
\(\mathbf{FL}^{\mathrm{FGC},d}_{ew}\) involutive.  On the positive cone,
logical negation is the residual negation relative to the center,
\[
 \neg_c x:=x\leadsto c,
\]
whereas \({\sim}x\) is computed in the whole centered algebra and may lie
below \(c\).  In general,
\[
 \neg_c x\neq{\sim}x
 \qquad\text{and}\qquad
 \neg_c\neg_c x=x
\]
need not hold.  The involution \({\sim}\) is used only in the term definition
\(g_c(x)={\sim}f_c({\sim}x)\).
\end{remark}

Here we present a positive-cone semantics of the logic \(\mathbf{FL}^{\mathrm{FGC},d}_{ew}\).

\begin{definition}
\label{def:positive-cone-valuation}
Let
\[
 \mathcal{\mathscr L}_c=(\boldsymbol{\mathfrak L},f_c)
 = (\mathcal{L},\cap,\cup,\otimes,\leadsto,{\sim},
    f_c,0,c,1)
 \in\CFADp
\]
and put
\[
 C=\mathcal C(\boldsymbol{\mathfrak L})
   =\{x\in \mathcal{L}\mid x\geq c\}.
\]
The positive-cone algebra associated with \(\mathcal{\mathscr L}_c\) is
\[
 \Cfun(\mathcal{\mathscr L}_c)
 =\bigl(C,\cap,\cup,\otimes_c,\leadsto,c,1,
        f_c|_C,g_c|_C\bigr),
\]
where
\[
 x\otimes_c y=(x\otimes y)\cup c,
 \qquad
 g_c(x)={\sim}f_c({\sim}x).
\]
The logical symbols
\[
 \bot,\ \top,\ \sqcap,\ \sqcup,\ \mathbin{\&},\ \Rightarrow,
 \ \triangledown,\ \blacktriangle
\]
are interpreted, respectively, by
\[
 c,\ 1,\ \cap,\ \cup,\ \otimes_c,\ \leadsto,
 \ f_c|_C,\ g_c|_C.
\]
A positive-cone valuation is a map
\[
 v:\mathsf{Var}\longrightarrow C
\]
extended recursively to all formulas.  We write
\[
 (\mathcal{\mathscr L}_c,v)\models_C\varphi
 \Longleftrightarrow
 v(\varphi)=1.
\]
For \(\Gamma\cup\{\varphi\}\subseteq
\mathrm{Fm}_{\mathcal L_{\mathrm{FGC}}}\), we write
\[
 \Gamma\models_{\Cfun(\CFADp)}\varphi
\]
when every positive-cone valuation in every object of \(\CFADp\) that assigns
value \(1\) to all formulas in \(\Gamma\) also assigns value \(1\) to
\(\varphi\).
\end{definition}

\begin{proposition}
\label{prop:positive-cone-well-defined}
Let \(\mathcal{\mathscr L}_c=(\boldsymbol{\mathfrak L},f_c)\in\CFADp\) and
\(C=\mathcal C(\boldsymbol{\mathfrak L})\).  Then:

(1)\ \(c,1\in C\),

(2)\ \(C\) is closed under \(\cap\), \(\cup\), \(\otimes_c\), and
      \(\leadsto\),
      
(3)\ \(f_c(C)\subseteq C\) and \(g_c(C)\subseteq C\),

(4)\ \(g_c\dashv f_c\) on \(C\),

(5)\ \(\Cfun(\mathcal{\mathscr L}_c)\in\FARDL\),

(6)\ every formula evaluated under a positive-cone valuation has its value
      in \(C\).
\end{proposition}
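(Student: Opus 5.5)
The plan is to reduce almost everything to results already proved in Section~\ref{section4.}, verifying each item in order. Items (1) and (2) concern only the underlying c-differential residuated distributive lattice. For (1), $c\le c$ and $c\le 1$ since $1$ is the top. For (2), closure under $\cap$ and $\cup$ follows from lattice monotonicity. For $\otimes_c$, we have $x\otimes_c y=(x\otimes y)\cup c\ge c$ by definition. For $\leadsto$, if $x,y\ge c$, then $x\otimes c\le 1\otimes c=c\le y$ by integrality and monotonicity of $\otimes$, so residuation gives $c\le x\leadsto y$. All of this is the standard fact, recalled in Section~\ref{section:preliminaries-RL} from \cite{Castiglionib}, that $\mathcal C(\boldsymbol{\mathfrak L})$ is the residuated distributive lattice $\mathbf C(\boldsymbol{\mathfrak L})$ with bottom $c$. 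I will cite it and add only the short residuation check.

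For (3), I will reuse the computation in the proof of Proposition~\ref{proposition5.7.}. If $x\ge c$, then $x=x\cup c$, so by \eqref{fc6} and \eqref{fc1}, $f_c(x)=f_c(x\cup c)=f_c(x)\cup c\ge c$. Next, ${\sim}x\le{\sim}c=c$. Monotonicity \ref{fc10} together with \eqref{fc1} gives $f_c({\sim}x)\le f_c(c)=c$, and applying the antitone involution yields $g_c(x)={\sim}f_c({\sim}x)\ge c$.

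For (4) and (5), the plan is to work with $f_c|_C$ inside $\mathbf C(\boldsymbol{\mathfrak L})$ and verify the $\FAIRL$-style identities there, then invoke the argument of Theorem~\ref{thm:FAIRL}. Concretely, \eqref{FARL1} on $C$ follows from \eqref{fc2}, since $f_c(x)\leadsto x=1$ means $f_c(x)\le x$. \eqref{FARL14} on $C$ is \eqref{fc4} verbatim, because $\cup$ is the same operation on $C$. \eqref{FARL17} on $C$ is \eqref{fc3}, because $\leadsto$ on $C$ is the restriction of $\leadsto$.

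The adjunction (4) should be established directly using the derived properties of $g_c$ rather than a cone-internal negation. The needed properties are $x\le g_c(x)$, which is the dual of $f_c({\sim}x)\le{\sim}x$; $f_c g_c=g_c$; $g_c f_c=f_c$; and monotonicity. Once these hold, the two-line argument for \eqref{FARL2} in Theorem~\ref{thm:FAIRL} transfers unchanged. To get $f_c(g_c(x))=g_c(x)$, I will use $g_c(x)={\sim}f_c({\sim}x)$ together with \ref{fc12}, which states $f_c({\sim}f_c(y))={\sim}f_c(y)$. For $g_c(f_c(x))=f_c(x)$, I would write $g_c f_c(x)={\sim}f_c({\sim}f_c(x))={\sim}{\sim}f_c(x)=f_c(x)$, again by \ref{fc12}.

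With \eqref{FARL1}, \eqref{FARL2} and these idempotence laws in hand, \eqref{FARL3}--\eqref{FARL5} follow exactly as in the corresponding steps of Theorem~\ref{thm:FAIRL}. \eqref{FARL5} is the delicate one, because that proof used $g=\neg f\neg$ with the residual negation. Here the negation is the ambient ${\sim}$, and ${\sim}$ is a dual lattice automorphism of $\boldsymbol{\mathfrak L}$. The De Morgan step therefore still works:
$g_c(x\cap g_c(y))={\sim}f_c({\sim}x\cup f_c({\sim}y))={\sim}(f_c({\sim}x)\cup f_c({\sim}y))=g_c(x)\cap g_c(y)$, using \ref{fc14}.

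Distributivity of $C$ is inherited from $\boldsymbol{\mathfrak L}$, which completes (5). Alternatively, (5) is simply Proposition~\ref{proposition5.7.}, and I would cite it while pointing to the computations above.

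Finally, (6) is a routine induction on formulas. Variables take values in $C$ by definition, the constants $c$ and $1$ lie in $C$ by (1), and each connective is interpreted by an operation under which $C$ is closed by (2) and (3).

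The main obstacle I anticipate is \eqref{FARL5}, and (4) generally. In these steps one must use the ambient involution ${\sim}$, which does not preserve $C$, instead of $\neg_c$ (see Remark~\ref{rem:ambient-involution}). I would address this by carrying out all De Morgan manipulations in $\boldsymbol{\mathfrak L}$ and using (3) only at the end to return to $C$.
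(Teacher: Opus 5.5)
Your proposal is correct and follows the same route as the paper. Items (1)--(3) and (6) are argued exactly as there, including the identical cone-preservation computation via $(f_c6)$, monotonicity of $f_c$ and $f_c(c)=c$. For (4)--(5) the paper only cites Section~\ref{section4.} (Proposition~\ref{proposition5.7.}, which points back to Theorem~\ref{thm:FAIRL}), whereas you carry that argument out explicitly. Your choice to do the adjunction and the De Morgan step for (FARL5) in the ambient algebra with ${\sim}$, rather than with $\neg_c$ on the cone, addresses exactly what the paper leaves implicit. It is sound because (FARL1)--(FARL5) involve only $\cap,\cup,\leadsto$, the order and the unary operators, so they restrict from $\boldsymbol{\mathfrak L}$ to $C$ once (2) and (3) are known.
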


\begin{proof}
The first assertion is immediate.  The interval \([c,1]\) is closed under
\(\cap\) and \(\cup\), and
\[
 x\otimes_c y=(x\otimes y)\cup c\geq c.
\]
The positive-cone construction recalled in Section~\ref{section:preliminaries-RL} shows that
\(x\leadsto y\in C\) whenever \(x,y\in C\), and that \(\leadsto\) is the
residual of \(\otimes_c\) on \(C\).

If \(x\in C\), then \(x=x\cup c\).  By axiom \eqref{fc6},
\[
 f_c(x)=f_c(x\cup c)=f_c(x)\cup c,
\]
so \(f_c(x)\geq c\).  Furthermore, \(x\geq c\) implies
\({\sim}x\leq c\).  By the monotonicity of \(f_c\) and \(f_c(c)=c\),
\[
 f_c({\sim}x)\leq f_c(c)=c.
\]
Applying the order-reversing involution gives
\[
 g_c(x)={\sim}f_c({\sim}x)\geq c.
\]
Thus both unary operations preserve the cone.

The results of Section~\ref{section4.} show that the restrictions of \(f_c\)
and \(g_c\) to \(C\) satisfy \eqref{FARL1}--\eqref{FARL5}; in particular,
\(g_c\dashv f_c\).  The lattice reduct is distributive, so
\(\Cfun(\mathcal{\mathscr L}_c)\in\FARDL\).  The last assertion now follows
by induction on formula complexity.
\end{proof}

\begin{lemma}
\label{lem:formula-transport}
Let
\[
 h:(\mathbf L_1,f_1,g_1)\longrightarrow(\mathbf L_2,f_2,g_2)
\]
be a homomorphism in \(\FARDL\), and let \(v\) be a valuation in
\((\mathbf L,f,g)\).  Define \(h(v)\) on variables by
\[
 h(v)(p)=h(v(p)).
\]
Then, for every formula \(\varphi\),
\[
 h(v)(\varphi)=h(v(\varphi)).
\]
If \(h\) is an isomorphism, then
\[
 v(\varphi)=1
 \Longleftrightarrow
 h(v)(\varphi)=1.
\]
\end{lemma}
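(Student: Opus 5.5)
The plan is a structural induction on the formula $\varphi$, using only that $h$ commutes with every basic operation of the signature $(\wedge,\vee,\odot,\to,f,g,0,1)$. Here $v$ is understood as a valuation in $(\mathbf L_1,f_1,g_1)$, so $h(v)$ is a valuation in $(\mathbf L_2,f_2,g_2)$. It is determined uniquely by its values on variables, since a valuation is a homomorphism from $\mathrm{Fm}_{\mathcal L_{\mathrm{FGC}}}$ that is freely determined by its action on $\mathsf{Var}$.

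For the base cases, if $\varphi=p$ is a variable then $h(v)(p)=h(v(p))$ by definition. If $\varphi$ is $\bot$ or $\top$, then $h(v)(\bot)=0=h(0)=h(v(\bot))$, and similarly $h(v)(\top)=1=h(v(\top))$, because $h$ preserves the constants.

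For the inductive step, let $\varphi=\star(\psi_1,\ldots,\psi_k)$ with $\star\in\{\sqcap,\sqcup,\&,\Rightarrow,\triangledown,\blacktriangle\}$, interpreted by the corresponding operation $\star^{A}$ of the algebra $A$. Then
\[
h(v)(\varphi)=\star^{\mathbf L_2}\bigl(h(v)(\psi_1),\ldots,h(v)(\psi_k)\bigr)
=\star^{\mathbf L_2}\bigl(h(v(\psi_1)),\ldots,h(v(\psi_k))\bigr)
=h\bigl(\star^{\mathbf L_1}(v(\psi_1),\ldots,v(\psi_k))\bigr)=h(v(\varphi)).
\]
The second equality is the induction hypothesis, and the third is the homomorphism property. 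For $\triangledown$ and $\blacktriangle$ this step uses $h\circ f_1=f_2\circ h$ and $h\circ g_1=g_2\circ h$, which hold because morphisms of $\FARDL$ preserve the full signature. The only point needing care is to make sure both unary operators are treated as primitive in the definition of morphism, and that is the case here.

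For the second assertion, assume $h$ is an isomorphism, so it is injective and $h(1)=1$. Then
\[
v(\varphi)=1\iff h(v(\varphi))=h(1)=1\iff h(v)(\varphi)=1,
\]
by the first part. The forward implication needs only that $h$ preserves $1$, and the backward implication uses injectivity. Hence the equivalence holds for every embedding, and in particular for every isomorphism.
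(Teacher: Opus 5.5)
Your proof is correct and follows the paper's argument essentially verbatim. Both use a structural induction in which $h$ commutes with the residuated-lattice operations and with $f,g$, and both then invoke $h(1)=1$ and injectivity for the final equivalence; your reading of $v$ as a valuation in $(\mathbf L_1,f_1,g_1)$ fixes the typo in the statement, and your remark that injectivity suffices, so the equivalence holds for embeddings, is a harmless sharpening.
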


\begin{proof}
The proof is a structural induction.  The variable and constant cases are
immediate.  The inductive steps for \(\sqcap\), \(\sqcup\), \(\mathbin{\&}\),
and \(\Rightarrow\) follow because \(h\) preserves the residuated-lattice
operations.  For the unary connectives,
\begin{align*}
 h(v)(\triangledown\varphi)
 &=f_2(h(v)(\varphi))
  =f_2(h(v(\varphi)))
  =h(f_1(v(\varphi))),\\
 h(v)(\blacktriangle\varphi)
 &=g_2(h(v)(\varphi))
  =g_2(h(v(\varphi)))
  =h(g_1(v(\varphi))).
\end{align*}
This completes the induction.  The final assertion follows from
\(h(1)=1\) and injectivity.
\end{proof}

We then construct two natural isomorphisms, serving as the basis for the proof of completeness.

\begin{proposition}
\label{prop:rho-formula-transport}
Let \((\mathbf L,f,g)\in\FARDL\), let \(v\) be a valuation in \(L\), and
define a positive-cone valuation in \((\mathbf K(\mathbf{L}),f_K)\) by
\[
 v^K(p)=\rho_L(v(p))=(v(p),0).
\]
Then, for every formula \(\varphi\),
\begin{equation}
 v^K(\varphi)=\rho_L(v(\varphi))=(v(\varphi),0).
 \label{eq:rho-formula-value}
\end{equation}
Consequently,
\[
 v(\varphi)=1
 \quad\Longleftrightarrow\quad
 v^K(\varphi)=(1,0).
\]
\end{proposition}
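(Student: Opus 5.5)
The plan is a structural induction on $\varphi$ establishing \eqref{eq:rho-formula-value}. The base case holds by definition: $v^K(p)=(v(p),0)=\rho_L(v(p))$. The constants are interpreted in the positive cone of $(\mathbf K(\mathbf L),f_K)$, where $\bot$ is read as $c=(0,0)=\rho_L(0)$ and $\top$ as $(1,0)=\rho_L(1)$. First we check that $v^K$ really is a positive-cone valuation. Each $(v(p),0)$ satisfies $v(p)\odot 0=0$, so it lies in $\mathbf K(L)$. Moreover $(v(p),0)\geq(0,0)=c$, so it lies in $C=\mathcal C(\mathbf K(\mathbf L))$.

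For the inductive steps we use that $\rho_L:(\mathbf L,f,g)\to\Cfun(\Kfun(\mathbf L),f_K)$ is an isomorphism in $\FARDL$ (Proposition~\ref{proposition5.12.}). We must also confirm that the operations of $\Cfun(\Kfun(\mathbf L),f_K)$ are exactly the ones used in Definition~\ref{def:positive-cone-valuation} to interpret the connectives. These are $\cap,\cup,\otimes_c,\leadsto$, together with $f_K|_C$ and $g_K|_C$, where $g_K={\sim}f_K{\sim}$.

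Given this, each connective $\star$ satisfies
\[
v^K(\star(\psi_1,\dots,\psi_k))=\star^{C}(v^K(\psi_1),\dots,v^K(\psi_k))=\star^{C}(\rho_L(v(\psi_1)),\dots)=\rho_L(\star^{L}(v(\psi_1),\dots)) .
\]
This is essentially Lemma~\ref{lem:formula-transport} applied to $h=\rho_L$, with $v^K=\rho_L(v)$.

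Concretely, the coordinate identities are routine. For the lattice connectives, $(x,0)\cup(y,0)=(x\vee y,0)$ and $(x,0)\cap(y,0)=(x\wedge y,0)$. For fusion, $(x,0)\otimes(y,0)=(x\odot y,(x\to0)\wedge(y\to0))$, and joining with $c=(0,0)$ gives $(x\odot y,0)$. For implication, $(x,0)\leadsto(y,0)=((x\to y)\wedge(0\to0),0)=(x\to y,0)$.

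The unary connectives are the only point needing care, and this is where I expect the main obstacle. For $\triangledown$ the computation is direct: $f_K(x,0)=(f(x),g(0))=(f(x),0)$ by \ref{FARL9}. For $\blacktriangle$ we need $g_K(x,0)={\sim}f_K({\sim}(x,0))={\sim}f_K(0,x)={\sim}(f(0),g(x))={\sim}(0,g(x))=(g(x),0)$, which uses \ref{FARL7}. This matches the computation already recorded in Proposition~\ref{proposition5.12.}, so I would cite it there rather than redo it.

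Finally, the consequence follows from \eqref{eq:rho-formula-value}. Since $\rho_L$ is injective and $\rho_L(1)=(1,0)$, we have $v(\varphi)=1$ iff $(v(\varphi),0)=(1,0)$, that is, iff $v^K(\varphi)=(1,0)$. Here $(1,0)$ is the top element $1$ of the positive cone, so this says $(\Kfun(\mathbf L),f_K),v^K\models_C\varphi$.
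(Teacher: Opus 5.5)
Your proposal is correct and is essentially the paper's proof. Both reduce the claim to Lemma~\ref{lem:formula-transport} applied to the isomorphism $\rho_L$ of Proposition~\ref{proposition5.12.}, and both support this with the same coordinate identities for $\otimes_c$, $\leadsto$, $f_K$ and $g_K={\sim}f_K{\sim}$ on pairs $(u,0)$. One small point: keep your own computation $g_K(x,0)={\sim}(f(0),g(x))=(g(x),0)$ rather than citing the displayed line in Proposition~\ref{proposition5.12.}, which writes $g_K(x,0)$ as $(g(x),g(0))$ and so does not literally follow the definition of $g_K$.
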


\begin{proof}
The map \(\rho_L\) is the natural isomorphism from \((\mathbf L,f,g)\) onto
the positive cone of \((\mathbf K(\mathbf{L}),f_K)\).  Hence the assertion follows
from Lemma~\ref{lem:formula-transport}.  Directly, one has
\[
 (x,0)\otimes_c(y,0)=(x\odot y,0),
 \qquad
 (x,0)\leadsto(y,0)=(x\to y,0),
\]
\[
 f_K(x,0)=(f(x),g(0))=(f(x),0),
\]
and, since \({\sim}(x,y)=(y,x)\),
\[
 {\sim}f_K({\sim}(x,0))
 ={\sim}f_K(0,x)
 ={\sim}(0,g(x))
 =(g(x),0).
\]
Thus every connective preserves pairs of the form \((u,0)\), proving
\eqref{eq:rho-formula-value} by induction.
\end{proof}

\begin{proposition}
\label{prop:sigma-formula-transport}
Let \(\mathcal{\mathscr L}_c=(\boldsymbol{\mathfrak L},f_c)\in\CFADp\), and
let
\[
 \sigma_{\boldsymbol{\mathfrak L}}:
 \boldsymbol{\mathfrak L}\longrightarrow
 \mathbf K\bigl(\mathcal C(\boldsymbol{\mathfrak L})\bigr),
 \qquad
 \sigma_{\boldsymbol{\mathfrak L}}(x)
 =(x\cup c,{\sim}x\cup c),
\]
be the natural isomorphism of Section~\ref{section4.}.  If
\(x\in\mathcal C(\boldsymbol{\mathfrak L})\), then
\[
 \sigma_{\boldsymbol{\mathfrak L}}(x)=(x,c).
\]
Consequently, if \(v\) is a positive-cone valuation and
\[
 v^{\sigma}(p)=\sigma_{\boldsymbol{\mathfrak L}}(v(p)),
\]
then
\begin{equation}
 v^{\sigma}(\varphi)
 =\sigma_{\boldsymbol{\mathfrak L}}(v(\varphi))
 =(v(\varphi),c)
 \label{eq:sigma-formula-value}
\end{equation}
for every formula \(\varphi\).
\end{proposition}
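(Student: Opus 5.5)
The plan is to reduce everything to Proposition~\ref{prop:rho-formula-transport}, applied to the positive-cone algebra $\mathcal C(\boldsymbol{\mathfrak L})$ in place of $\mathbf L$.

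\emph{First assertion.} Let $x\in\mathcal C(\boldsymbol{\mathfrak L})$, so $x\geq c$. Then $x\cup c=x$. Since ${\sim}$ is an order-reversing involution and $c$ is a fixed point of it, $x\geq c$ gives ${\sim}x\leq{\sim}c=c$. Hence ${\sim}x\cup c=c$, and therefore
\[
 \sigma_{\boldsymbol{\mathfrak L}}(x)=(x,c).
\]

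\emph{Reduction.} Put $\mathbf C:=\Cfun(\mathcal{\mathscr L}_c)$. By Proposition~\ref{prop:positive-cone-well-defined}(5), $\mathbf C\in\FARDL$. Its bottom element is $c$, and its operators are $f_c|_C$ and $g_c|_C$. Consequently $\mathbf K(\mathbf C)$ is exactly the codomain $\mathbf K(\mathcal C(\boldsymbol{\mathfrak L}))$ of $\sigma_{\boldsymbol{\mathfrak L}}$, equipped with $f_K(u,w)=(f_c(u),g_c(w))$. The map $\rho_{\mathbf C}$ of Proposition~\ref{proposition5.12.} sends $u\mapsto(u,0_{\mathbf C})=(u,c)$. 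By the first step, $\sigma_{\boldsymbol{\mathfrak L}}$ and $\rho_{\mathbf C}$ agree on $C$. So for every variable $p$,
\[
 v^{\sigma}(p)=\rho_{\mathbf C}(v(p)),
\]
which says that $v^\sigma$ is precisely the valuation $v^K$ built from $v$ (a valuation in $\mathbf C$) in Proposition~\ref{prop:rho-formula-transport}. That proposition then gives $v^{\sigma}(\varphi)=\rho_{\mathbf C}(v(\varphi))=(v(\varphi),c)$. By Proposition~\ref{prop:positive-cone-well-defined}(6), $v(\varphi)\in C$, so the first step yields $(v(\varphi),c)=\sigma_{\boldsymbol{\mathfrak L}}(v(\varphi))$, and \eqref{eq:sigma-formula-value} follows.

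\emph{Main obstacle: matching interpretations.} The delicate point is checking that the connectives in $v^\sigma$ are interpreted consistently. The formula is evaluated in the positive cone of $(\mathbf K(\mathbf C),f_K)$, whose center is $(c,c)$ and whose bottom is therefore $(c,c)$ rather than $(c,\,\text{anything})$. I would verify this directly on pairs of the form $(u,c)$:
\[
 (u,c)\otimes_c(w,c)=(u\otimes_c w,c),\qquad (u,c)\leadsto(w,c)=(u\leadsto w,c),\qquad f_K(u,c)=(f_c(u),g_c(c))=(f_c(u),c).
\]
The last equality uses $g_c(c)={\sim}f_c({\sim}c)={\sim}f_c(c)=c$. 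For the left adjoint, ${\sim}f_K({\sim}(u,c))=(g_c(u),c)$. Finally, the constants $\bot$ and $\top$ are sent to $(c,c)$ and $(1,c)$ respectively.

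These identities are exactly the computations in the proof of Proposition~\ref{prop:rho-formula-transport}, with $0$ replaced by $c$. If citing that proposition verbatim seems too quick, I would instead run the structural induction directly using these identities.
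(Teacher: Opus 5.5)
Your proof is correct and follows the paper's argument. The paper likewise first shows $\sigma_{\boldsymbol{\mathfrak L}}(x)=(x,c)$ for $x\geq c$, and then obtains \eqref{eq:sigma-formula-value} from Lemma~\ref{lem:formula-transport}, on the grounds that the restriction of $\sigma_{\boldsymbol{\mathfrak L}}$ to the cone preserves all operations, including $f_c$ and $g_c$. You instead identify $\sigma_{\boldsymbol{\mathfrak L}}|_C$ with $\rho_{\Cfun(\mathcal{\mathscr L}_c)}$ and apply Proposition~\ref{prop:rho-formula-transport}, which is another way of certifying that same homomorphism property. Your explicit check of the connectives on pairs $(u,c)$, including $g_c(c)=c$, fills in details the paper leaves implicit.
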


\begin{proof}
If \(x\geq c\), then \(x\cup c=x\).  Since \({\sim}\) reverses the order
and fixes \(c\), one has \({\sim}x\leq c\), so
\({\sim}x\cup c=c\).  Therefore
\(\sigma_{\boldsymbol{\mathfrak L}}(x)=(x,c)\).  The restriction of
\(\sigma_{\boldsymbol{\mathfrak L}}\) to the positive cone preserves all
operations, including \(f_c\) and \(g_c\).  Lemma~\ref{lem:formula-transport}
therefore gives \eqref{eq:sigma-formula-value}.
\end{proof}

\begin{corollary}
\label{cor:equivalence-invariance}
It is worth noticing that validity, local consequence, equation satisfaction, and quasi-equation
satisfaction are invariant under the natural isomorphisms \(\rho\) and
\(\sigma\) of
\[
 \FARDL\simeq\CFADp.
\]
\end{corollary}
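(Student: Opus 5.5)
The plan is to reduce every notion listed in the statement to the single fact that isomorphisms in the relevant categories are bijections preserving all basic operations and constants, and then to apply Lemma~\ref{lem:formula-transport} together with Propositions~\ref{prop:rho-formula-transport} and~\ref{prop:sigma-formula-transport}. I will first fix precise meanings. \emph{Validity} of $\varphi$ in an algebra means $v(\varphi)=1$ for every valuation $v$. \emph{Local consequence} $\Gamma\models\varphi$ in an algebra means that every valuation sending $\Gamma$ to $1$ sends $\varphi$ to $1$. Satisfaction of an equation $s\approx t$ or of a quasi-equation $\bigwedge_i s_i\approx t_i\Rightarrow s\approx t$ is meant in the usual model-theoretic sense. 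On the $\CFADp$ side, each notion is read either in the full signature or, for formulas, through positive-cone valuations as in Definition~\ref{def:positive-cone-valuation}.

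For $\rho$, let $(\mathbf L,f,g)\in\FARDL$. By Proposition~\ref{proposition5.12.}, $\rho_L:(\mathbf L,f,g)\to\Cfun(\Kfun(\mathbf L),f_K)$ is an isomorphism in $\FARDL$. Valuations $v$ in $\mathbf L$ correspond bijectively to positive-cone valuations $v^K=\rho_L\circ v$, since $\rho_L$ is onto the cone. By \eqref{eq:rho-formula-value}, $v^K(\varphi)=\rho_L(v(\varphi))$, and $\rho_L(1)=(1,0)$ is the top of the cone. Injectivity then gives $v(\varphi)=1$ iff $v^K(\varphi)=(1,0)$. Quantifying over all $v$ transfers validity, and quantifying over those $v$ with $v(\Gamma)=1$ transfers local consequence. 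For equations, I will use the term version of Lemma~\ref{lem:formula-transport}: $\rho_L(s^{\mathbf L}(\vec a))=s^{\Cfun\Kfun}(\rho_L\vec a)$. Injectivity together with surjectivity of $\rho_L$ then shows that $s\approx t$ holds in one algebra iff it holds in the other. Quasi-equations follow the same way, because each premise and the conclusion transfer individually under the same assignment.

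For $\sigma$, let $(\boldsymbol{\mathfrak L},f_c)\in\CFADp$. By Theorem~\ref{theorem5.15}, $\sigma_{\boldsymbol{\mathfrak L}}$ is an isomorphism preserving $\cap,\cup,\otimes,\leadsto,{\sim},c,0,1$ and, by Proposition~\ref{proposition5.13}, also $f_c$. Hence $g_c={\sim}f_c{\sim}$ is preserved as well. Equation and quasi-equation satisfaction in the full signature therefore transfer exactly as above. For the positive-cone semantics, Proposition~\ref{prop:sigma-formula-transport} shows that $\sigma$ restricts to a bijection from $\mathcal C(\boldsymbol{\mathfrak L})$ onto the cone $\{(x,c)\}$ of $\Kfun\Cfun(\boldsymbol{\mathfrak L})$, and that it commutes with evaluation of formulas. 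Since $\sigma(1)=(1,c)$ and $\sigma$ is injective, $v(\varphi)=1$ iff $v^\sigma(\varphi)=1$, so validity and local consequence transfer.

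The main obstacle is checking that the restriction of $\sigma$ to the cone is an isomorphism of the cone algebras, and not merely of the ambient algebras. The delicate points are $\otimes_c$, the cone bottom $c$, and $g_c$, since $\sigma$ is only stated to preserve the ambient operations. I will handle these directly. First, $\sigma((x\otimes y)\cup c)=(\sigma x\otimes\sigma y)\cup\sigma c$. Second, $\sigma(c)=(c,c)$ is the center, and hence the bottom, of the target cone. Third, ${\sim}$ and $f_c$ are preserved, so $g_c$ is preserved too. Surjectivity onto the target cone follows because $\sigma$ is an order isomorphism with $\sigma(c)$ the center, so it maps elements $\geq c$ exactly onto elements $\geq\sigma(c)$.
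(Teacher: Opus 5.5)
Your proposal is correct and follows essentially the route the paper intends. The paper states this corollary without a separate proof, as an immediate consequence of Lemma~\ref{lem:formula-transport} and Propositions~\ref{prop:rho-formula-transport} and~\ref{prop:sigma-formula-transport}: the components of $\rho$ and $\sigma$ are isomorphisms commuting with term evaluation. Your explicit check that $\sigma$ restricts to an isomorphism of the cone algebras usefully fills in what Proposition~\ref{prop:sigma-formula-transport} only asserts. That check covers preservation of $\otimes_c$, of the cone bottom $c$ (via $\sigma(c)=(c,c)$) and of $g_c={\sim}f_c{\sim}$, together with surjectivity of $\mathcal C(\boldsymbol{\mathfrak L})$ onto the cone of $\Kfun\Cfun(\boldsymbol{\mathfrak L})$.
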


Now we prove the completeness and Compactness of the logic \(\mathbf{FL}^{\mathrm{FGC},d}_{ew}\).

\begin{theorem}
\label{thm:positive-cone-completeness}
For every \(\Gamma\cup\{\varphi\}\subseteq
\mathrm{Fm}_{\mathcal L_{\mathrm{FGC}}}\),
\[
 \Gamma\vdash_{\mathbf{FL}^{\mathrm{FGC},d}_{ew}}\varphi
 \Longleftrightarrow\
 \Gamma\models_{\FARDL}\varphi
 \Longleftrightarrow
 \Gamma\models_{\Cfun(\CFADp)}\varphi.
\]
\end{theorem}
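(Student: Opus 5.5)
The first equivalence is exactly Proposition~\ref{prop:alg-distributive}, so it remains to prove
\[
 \Gamma\models_{\FARDL}\varphi\Longleftrightarrow\Gamma\models_{\Cfun(\CFADp)}\varphi .
\]
I will prove the two directions separately. One direction uses the positive-cone construction $\Cfun$, the other uses the Kalman construction $\Kfun$ together with $\rho$.

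For the direction from $\FARDL$-consequence to positive-cone consequence, fix $\mathscr L_c=(\boldsymbol{\mathfrak L},f_c)\in\CFADp$ and a positive-cone valuation $v$ into $C=\mathcal C(\boldsymbol{\mathfrak L})$ with $v(\gamma)=1$ for all $\gamma\in\Gamma$. By Proposition~\ref{prop:positive-cone-well-defined}(5), $\Cfun(\mathscr L_c)\in\FARDL$. By Definition~\ref{def:positive-cone-valuation}, every connective is interpreted in $C$ by the corresponding operation of $\Cfun(\mathscr L_c)$: $\bot$ by $c$, $\&$ by $\otimes_c$, $\triangledown$ by $f_c|_C$, and $\blacktriangle$ by $g_c|_C$. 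Hence $v$ is literally an ordinary valuation into this member of $\FARDL$, and the two recursive evaluations agree (Proposition~\ref{prop:positive-cone-well-defined}(6)). The hypothesis $\Gamma\models_{\FARDL}\varphi$ then gives $v(\varphi)=1$. This direction is therefore a bookkeeping check that the interpretation clauses match.

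For the converse, suppose $\Gamma\models_{\Cfun(\CFADp)}\varphi$, and let $(\mathbf L,f,g)\in\FARDL$ and $v$ be a valuation with $v(\Gamma)=\{1\}$. By Theorem~\ref{theorem4.6}(1) and Theorem~\ref{theorem5.15}, $(\Kfun(\mathbf L),f_K)\in\CFADp$. I will take the positive-cone valuation $v^K(p)=(v(p),0)$. Proposition~\ref{prop:rho-formula-transport} gives $v^K(\psi)=(v(\psi),0)$ for every formula $\psi$. Since the top of $\Kfun(\mathbf L)$ is $(1,0)$, each $\gamma\in\Gamma$ satisfies $v^K(\gamma)=(1,0)=1$. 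The assumption then yields $v^K(\varphi)=(1,0)$, hence $v(\varphi)=1$. Alternatively, one can argue abstractly by Lemma~\ref{lem:formula-transport} applied to the isomorphism $\rho_L:(\mathbf L,f,g)\to\Cfun\Kfun(\mathbf L,f,g)$ of Proposition~\ref{proposition5.12.}.

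The main obstacle is the consistency check inside Proposition~\ref{prop:rho-formula-transport}, namely that the cone operations on elements $(x,0)$ reproduce the operations of $\mathbf L$. In $\Kfun(\mathbf L)$ the center is $c=(0,0)$, and $(x,0)\geq c$, so these pairs are exactly the cone elements. I must verify three things. First, $(x,0)\otimes_c(y,0)=\bigl((x\odot y,\,(x\to 0)\wedge(y\to 0))\bigr)\cup(0,0)=(x\odot y,0)$. Second, $\leadsto$ gives $(x\to y,0)$. Third, $f_K$ and $g_K={\sim}f_K{\sim}$ give $(f(x),0)$ and $(g(x),0)$, using $g(0)=0$ and $f(0)=0$ from \ref{FARL7} and \ref{FARL9}. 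I will also check that the cone bottom $c=(0,0)$ corresponds to $0$ and the top to $(1,0)$. With these verified, structural induction closes the argument. Both directions together give the stated triple equivalence.
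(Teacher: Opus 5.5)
Your proposal is correct and follows the paper's proof essentially verbatim. The forward direction uses $\Cfun(\mathscr L_c)\in\FARDL$ (Proposition~\ref{prop:positive-cone-well-defined}), and the converse transports a $\FARDL$-valuation to the positive cone of $(\Kfun(\mathbf L),f_K)$ via $v^K(p)=(v(p),0)$ and Proposition~\ref{prop:rho-formula-transport}; your explicit checks on pairs $(x,0)$ of $\otimes_c$, $\leadsto$, $f_K$, $g_K={\sim}f_K{\sim}$ and of the constants $c=(0,0)$ and $1=(1,0)$ are accurate and match the computations the paper gives for that proposition.
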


\begin{proof}
The first equivalence is Proposition~\ref{prop:alg-distributive}.

Assume \(\Gamma\models_{\FARDL}\varphi\).  Let
\(\mathcal{\mathscr L}_c\in\CFADp\), and let \(v\) be a positive-cone
valuation such that \(v(\gamma)=1\) for all \(\gamma\in\Gamma\).
Proposition~\ref{prop:positive-cone-well-defined} gives
\(\Cfun(\mathcal{\mathscr L}_c)\in\FARDL\).  Hence \(v(\varphi)=1\), and
therefore
\(\Gamma\models_{\Cfun(\CFADp)}\varphi\).

Conversely, assume
\(\Gamma\models_{\Cfun(\CFADp)}\varphi\).  Let
\((\mathbf L,f,g)\in\FARDL\), and let \(v\) be a valuation such that
\(v(\gamma)=1\) for every \(\gamma\in\Gamma\).  By Section~\ref{section4.},
\((\mathbf K(\mathbf{L}),f_K)\in\CFADp\).  Define
\(v^K(p)=(v(p),0)\).  Proposition~\ref{prop:rho-formula-transport} gives
\[
 v^K(\gamma)=(1,0)
 \qquad(\gamma\in\Gamma).
\]
Positive-cone validity yields \(v^K(\varphi)=(1,0)\).  Again by
Proposition~\ref{prop:rho-formula-transport},
\[
 (v(\varphi),0)=(1,0),
\]
so \(v(\varphi)=1\).  Therefore
\(\Gamma\models_{\FARDL}\varphi\).
\end{proof}

\begin{corollary}
\label{cor:positive-global-completeness}
For every formula \(\varphi\),
\[
 \vdash_{\mathbf{FL}^{\mathrm{FGC},d}_{ew}}\varphi
 \quad\Longleftrightarrow\quad
 \models_{\FARDL}\varphi
 \quad\Longleftrightarrow\quad
 \models_{\Cfun(\CFADp)}\varphi.
\]
\end{corollary}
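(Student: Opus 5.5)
The corollary is the special case $\Gamma=\varnothing$ of Theorem~\ref{thm:positive-cone-completeness}, so the plan is simply to instantiate that theorem and check that the empty premise set reduces each consequence relation to the corresponding notion of validity.

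\emph{First step (empty premises).} I would first record what $\Gamma=\varnothing$ means on each side:
\begin{itemize}
\item $\varnothing\vdash_{\mathbf{FL}^{\mathrm{FGC},d}_{ew}}\varphi$ is, by definition, the statement that $\varphi$ is a theorem.
\item $\varnothing\models_{\FARDL}\varphi$ means that $v(\varphi)=1$ for every valuation into every member of $\FARDL$, since the condition on premises is vacuous. This is exactly $\models_{\FARDL}\varphi$.
\item Likewise, $\varnothing\models_{\Cfun(\CFADp)}\varphi$ says that every positive-cone valuation in every object of $\CFADp$ gives $\varphi$ value $1$, the top of the cone.
\end{itemize}

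\emph{Second step (instantiate).} Putting $\Gamma=\varnothing$ into the two biconditionals of Theorem~\ref{thm:positive-cone-completeness} then yields the displayed chain.

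\emph{Robustness check.} For safety I would note that both directions survive the instantiation without any compactness or nonemptiness assumption.
\begin{itemize}
\item The inclusion from $\FARDL$-validity to cone validity uses only Proposition~\ref{prop:positive-cone-well-defined}(5), namely that each positive cone is a member of $\FARDL$.
\item The converse uses the transport $v\mapsto v^K$ of Proposition~\ref{prop:rho-formula-transport}, where $v(\varphi)=1$ if and only if $v^K(\varphi)=(1,0)$, and $(1,0)$ is the top of the positive cone of $\mathbf K(\mathbf L)$.
\item The syntactic equivalence is Proposition~\ref{prop:alg-distributive}.
\end{itemize}

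\emph{Main obstacle.} The only point needing care is that the designated value is the same on both semantic sides. In the cone semantics it is the element $1$ of $\boldsymbol{\mathfrak L}$, while for $\mathbf K(\mathbf L)$ that top element is the pair $(1,0)$. Proposition~\ref{prop:rho-formula-transport} already identifies these, so nothing beyond the theorem is required.
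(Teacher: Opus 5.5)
Your proposal is correct and is essentially the paper's argument: the paper states this corollary without a separate proof, as the special case $\Gamma=\varnothing$ of Theorem~\ref{thm:positive-cone-completeness}. With empty premises, each of the three consequence relations reduces to theoremhood or to validity, and your check that the cone top of $(\mathbf K(\mathbf L),f_K)$ is $(1,0)$ is already covered by Proposition~\ref{prop:rho-formula-transport}.
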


\begin{corollary}
\label{cor:positive-compactness}
For every \(\Gamma\cup\{\varphi\}\subseteq
\mathrm{Fm}_{\mathcal L_{\mathrm{FGC}}}\),
\[
 \Gamma\models_{\Cfun(\CFADp)}\varphi
\]
if and only if there exists a finite \(\Gamma_0\subseteq\Gamma\) such that
\[
 \Gamma_0\models_{\Cfun(\CFADp)}\varphi.
\]
\end{corollary}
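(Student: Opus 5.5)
The compactness statement will be transferred from the syntactic side, where it is automatic. By Theorem~\ref{thm:positive-cone-completeness}, for every set \(\Delta\cup\{\varphi\}\) of formulas we have \(\Delta\models_{\Cfun(\CFADp)}\varphi\) if and only if \(\Delta\vdash_{\mathbf{FL}^{\mathrm{FGC},d}_{ew}}\varphi\). It therefore suffices to prove the corresponding finiteness property for derivability and then translate back through this equivalence, applied once to \(\Gamma\) and once to the finite subset \(\Gamma_0\).

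For the nontrivial direction, I assume \(\Gamma\models_{\Cfun(\CFADp)}\varphi\), so that \(\Gamma\vdash_{\mathbf{FL}^{\mathrm{FGC},d}_{ew}}\varphi\). By the definition of derivation (Definition~\ref{def:logic-FGC}, extended by the schemes \eqref{eq:dist-one} and \eqref{eq:dist-two}), there is a finite sequence of formulas ending in \(\varphi\). Each member is an element of \(\Gamma\), an axiom instance, or obtained by (MP), \((\mathrm{GC}_1)\), \((\mathrm{GC}_2)\) or \((\mathrm N_{\triangledown})\) from earlier members.

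I then let \(\Gamma_0\) be the set of members of this sequence that belong to \(\Gamma\). This set is finite, and the same sequence is a derivation of \(\varphi\) from \(\Gamma_0\), so \(\Gamma_0\vdash\varphi\). Applying Theorem~\ref{thm:positive-cone-completeness} again gives \(\Gamma_0\models_{\Cfun(\CFADp)}\varphi\).

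The converse direction is monotonicity of the semantic consequence relation. If \(\Gamma_0\subseteq\Gamma\) and \(\Gamma_0\models_{\Cfun(\CFADp)}\varphi\), then any positive-cone valuation sending all of \(\Gamma\) to \(1\) in particular sends all of \(\Gamma_0\) to \(1\), and hence sends \(\varphi\) to \(1\).

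The only point needing care is that the rules \((\mathrm{GC}_1)\), \((\mathrm{GC}_2)\) and \((\mathrm N_{\triangledown})\) are applied to arbitrary earlier lines, including lines depending on hypotheses from \(\Gamma\). This is harmless: a derivation still uses only finitely many hypotheses, because its length is finite. So I expect no genuine obstacle; the whole content is carried by the strong completeness in Theorem~\ref{thm:positive-cone-completeness}, which holds for arbitrary, possibly infinite, \(\Gamma\).
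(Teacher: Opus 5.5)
Your proposal is correct and follows the paper's proof: you pass to derivability via Theorem~\ref{thm:positive-cone-completeness}, use the fact that a finite derivation invokes only finitely many premises, and then transfer back. You also spell out the converse direction (monotonicity of the semantic consequence relation), which the paper leaves implicit.
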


\begin{proof}
By Theorem~\ref{thm:positive-cone-completeness}, semantic consequence in the
positive cones coincides with derivability in
\(\mathbf{FL}^{\mathrm{FGC},d}_{ew}\).  Every Hilbert derivation uses only
finitely many premises.  Applying the completeness theorem once more gives
the result.
\end{proof}

\subsection{Equational consequence and conservativity}
\label{subsec:equational-consequence}

Having established completeness through positive-cone semantics, in this subsection, we show that the same correspondence preserves equational and quasi-equational consequence of \(\mathbf{FL}^{\mathrm{FGC},d}_{ew}\).

\begin{theorem}
\label{thm:equational-transport}
Let \(\Sigma\cup\{s\approx t\}\) be a set of equations in the source
signature
\[
 (\wedge,\vee,\odot,\to,0,1,f,g).
\]
Then
\[
 \Sigma\models_{\FARDL}s\approx t
\]
if and only if the corresponding equation is valid in every positive-cone
algebra \(\Cfun(\mathcal{\mathscr L}_c)\), where
\(\mathcal{\mathscr L}_c\in\CFADp\).  The same equivalence holds for
quasi-equations.
\end{theorem}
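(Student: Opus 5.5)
The plan is to interpret each source term \(t\) in a positive-cone algebra \(\Cfun(\mathcal{\mathscr L}_c)\) by the same recipe as in Definition~\ref{def:positive-cone-valuation}. Thus \(\wedge,\vee,\odot,\to,0,1,f,g\) are read as \(\cap,\cup,\otimes_c,\leadsto,c,1,f_c|_C,g_c|_C\). The equivalence is then reduced to the fact that the class of positive cones and \(\FARDL\) coincide up to isomorphism.

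First, for the direction from \(\FARDL\) to positive cones, assume \(\Sigma\models_{\FARDL}s\approx t\). Let \(\mathcal{\mathscr L}_c\in\CFADp\) and let \(v:\mathsf{Var}\to C\) be an assignment satisfying every equation of \(\Sigma\) in the cone. By Proposition~\ref{prop:positive-cone-well-defined}(5), \(\Cfun(\mathcal{\mathscr L}_c)\in\FARDL\). The cone interpretation of terms is literally the term evaluation in this \(\FARDL\)-algebra, by item (6) and induction on terms. Hence \(v(s)=v(t)\).

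Second, for the converse, assume the consequence holds in all positive cones. Take \((\mathbf L,f,g)\in\FARDL\) and an assignment \(v\) satisfying \(\Sigma\). Form \((\Kfun(\mathbf L),f_K)\in\CFADp\) using Theorem~\ref{theorem4.6} and Theorem~\ref{theorem5.15}, and transport the assignment via \(\rho_L\), setting \(v^K(p)=(v(p),0)\). By Proposition~\ref{prop:rho-formula-transport}, read at the level of terms rather than formulas (the identical induction, since \(\rho_L\) is an isomorphism onto \(\Cfun\Kfun(\mathbf L)\) preserving \(f\) and \(g\), by Proposition~\ref{proposition5.12.} and Lemma~\ref{lem:formula-transport}), every term satisfies \(v^K(u)=(v(u),0)\). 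Because \(\rho_L\) is injective, \(v(u_1)=v(u_2)\) iff \(v^K(u_1)=v^K(u_2)\). So \(v^K\) satisfies \(\Sigma\) in the cone, hence satisfies \(s\approx t\), and pulling back gives \(v(s)=v(t)\).

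For quasi-equations \(\bigwedge_i s_i\approx t_i\Rightarrow s\approx t\), the same two arguments apply verbatim with \(\Sigma\) finite. Validity of a quasi-equation in an algebra is exactly the finite-premise consequence at every assignment, and it is preserved and reflected by isomorphisms; this is Corollary~\ref{cor:equivalence-invariance}.

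The only point needing care is the second step. One must check that the transport lemma is stated for formulas with value \(1\), but what is needed is full equality of values. Injectivity and homomorphism of \(\rho_L\) give this directly, so the obstacle is notational rather than substantive. I would also remark that equations need not be reduced to \(s\Leftrightarrow t\approx 1\) via algebraizability, though that reduction together with Theorem~\ref{thm:positive-cone-completeness} gives an alternative proof of the equational case.
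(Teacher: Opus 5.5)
Your proposal is correct and follows the paper's proof essentially step for step. The forward direction uses that every positive cone lies in $\FARDL$ (Proposition~\ref{prop:positive-cone-well-defined}). The converse transports a satisfying assignment along $\rho_L$ into the cone of $(\Kfun(\mathbf L),f_K)$ via the term version of Proposition~\ref{prop:rho-formula-transport}, with quasi-equations handled by the same argument.
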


\begin{proof}
If \(\Sigma\models_{\FARDL}s\approx t\), the conclusion holds in every
positive-cone algebra because each such algebra belongs to \(\FARDL\).

Conversely, let \((\mathbf L,f,g)\in\FARDL\) and let an assignment \(v\)
satisfy every equation in \(\Sigma\).  Transport the assignment to the
positive cone of \(\mathbf K(\mathbf{L})\) by
\[
 v^K(p)=(v(p),0).
\]
The term version of Proposition~\ref{prop:rho-formula-transport} gives
\[
 v^K(r)=(v(r),0)
\]
for every term \(r\).  Hence \(v^K\) satisfies \(\Sigma\), and the assumed
positive-cone consequence gives
\[
 (v(s),0)=v^K(s)=v^K(t)=(v(t),0).
\]
Thus \(v(s)=v(t)\).  The proof for quasi-equations is identical, since the
transport preserves each premise equation and the conclusion equation.
\end{proof}

\begin{theorem}
\label{thm:positive-conservativity}
Suppose that \(\Gamma\cup\{\varphi\}\) contains no occurrences of
\(\triangledown\) or \(\blacktriangle\).  Then
\[
 \Gamma\vdash_{\mathbf{FL}^{\mathrm{FGC},d}_{ew}}\varphi
 \Longleftrightarrow
 \Gamma\vdash_{\mathbf{FL}^{d}_{ew}}\varphi
 \Longleftrightarrow
 \Gamma\models_{\Cfun(\CFADp)}\varphi,
\]
where \(\mathbf{FL}^{d}_{ew}\) is the distributive extension of
\(\FLew\), without an involution axiom.
\end{theorem}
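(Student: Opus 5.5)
The plan is to read the second equivalence off Theorem~\ref{thm:positive-cone-completeness} and to prove the first by a semantic expansion argument, in the style of Theorem~\ref{thm:FGC-conservativity}.

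For the first equivalence, the right-to-left direction is immediate. $\mathbf{FL}^{\mathrm{FGC},d}_{ew}$ contains every axiom and rule of $\mathbf{FL}^{d}_{ew}$ (those of $\FLew$ together with \eqref{eq:dist-one}--\eqref{eq:dist-two}). Hence every $\mathbf{FL}^{d}_{ew}$-derivation is also an $\mathbf{FL}^{\mathrm{FGC},d}_{ew}$-derivation.

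For the left-to-right direction, assume $\Gamma\nvdash_{\mathbf{FL}^{d}_{ew}}\varphi$. The logic $\mathbf{FL}^{d}_{ew}$ is the axiomatic extension of $\FLew$ by equationally translated schemes. By the Blok--Pigozzi argument already used in Proposition~\ref{prop:alg-distributive}, or simply by the Lindenbaum--Tarski construction of Theorem~\ref{thm:FGC-completeness} restricted to FGC-free formulas, it is strongly complete with respect to $\mathbb{RDL}$. So there are $\mathbf L\in\mathbb{RDL}$ and a valuation $v$ with $v(\gamma)=1$ for all $\gamma\in\Gamma$ and $v(\varphi)\neq 1$.

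Next, expand $\mathbf L$ by $f=g=\operatorname{id}_L$. Conditions \eqref{FARL1}--\eqref{FARL5} hold trivially, so $(\mathbf L,\operatorname{id},\operatorname{id})\in\FARDL$. Since the formulas in $\Gamma\cup\{\varphi\}$ are FGC-free, their values do not change under this expansion. Therefore $\Gamma\not\models_{\FARDL}\varphi$, and Proposition~\ref{prop:alg-distributive} gives $\Gamma\nvdash_{\mathbf{FL}^{\mathrm{FGC},d}_{ew}}\varphi$.

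The second equivalence is then just Theorem~\ref{thm:positive-cone-completeness} specialised to FGC-free $\Gamma\cup\{\varphi\}$. It identifies $\vdash_{\mathbf{FL}^{\mathrm{FGC},d}_{ew}}$ with $\models_{\Cfun(\CFADp)}$ for arbitrary formulas, and in particular for FGC-free ones. Chaining the two equivalences finishes the proof.

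As a sanity check that does not go through $\FARDL$, one could give a direct cone argument. Take $\mathbf L\in\mathbb{RDL}$ and pass to $(\Kfun(\mathbf L),f_K)$ with $f=g=\operatorname{id}$, so that $f_K=\operatorname{id}$. This satisfies \eqref{fc1}--\eqref{fc6} and $\mathbf{CK}$. Proposition~\ref{prop:rho-formula-transport} then carries the countermodel to a positive-cone countermodel.

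The main obstacle is the strong completeness of $\mathbf{FL}^{d}_{ew}$ with respect to $\mathbb{RDL}$, since the paper states only that of $\FLew$ with respect to $\mathbb{RL}$. I will handle it by noting that the proof of Theorem~\ref{thm:FGC-completeness}, run without the FGC axioms and rules, yields a Lindenbaum algebra. That algebra satisfies the distributive identities because \eqref{eq:dist-one}--\eqref{eq:dist-two} are theorems of the logic. Hence it lies in $\mathbb{RDL}$, and it is exactly the countermodel required.
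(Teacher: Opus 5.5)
Your proposal is correct and follows the paper's proof. The first equivalence comes from expanding an $\mathbb{RDL}$-countermodel by $f=g=\operatorname{id}$ and invoking Proposition~\ref{prop:alg-distributive}, and the second is Theorem~\ref{thm:positive-cone-completeness} restricted to FGC-free formulas; your Lindenbaum-algebra argument for strong completeness of $\mathbf{FL}^{d}_{ew}$ with respect to $\mathbb{RDL}$ only makes explicit a step the paper takes for granted.
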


\begin{proof}
The implication from the base calculus to its expansion is immediate.
Conversely, suppose
\[
 \Gamma\nvdash_{\mathbf{FL}^{d}_{ew}}\varphi.
\]
By strong completeness of the distributive base logic, there exist a
residuated distributive lattice \(\mathbf L\) and a valuation \(v\) such that
\[
 v(\gamma)=1\quad(\gamma\in\Gamma),
 \qquad
 v(\varphi)\neq1.
\]
Expand \(\mathbf L\) by \(f=g=\Id_L\).  Then
\((\mathbf L,f,g)\in\FARDL\).  Since the formulas contain no unary
connectives, their values are unchanged; hence
\[
 \Gamma\not\models_{\FARDL}\varphi.
\]
Proposition~\ref{prop:alg-distributive} and
Theorem~\ref{thm:positive-cone-completeness} yield the displayed
equivalences.
\end{proof}

\subsection{Finite countermodels}
\label{subsec:finite-countermodels}

We finally show that the finite model properties formulated in \(\FARDL\) coincide with those formulated through finite positive cones of algebras in \(\CFADp\) based on Kalman equivalence in Subsection \ref{subsection:FMP-FGC} and Subsection\ref{subsection:CAE-FARL}.

\begin{theorem}
\label{thm:finite-countermodel-transfer}
For every \(\Gamma\cup\{\varphi\}\subseteq
\mathrm{Fm}_{\mathcal L_{\mathrm{FGC}}}\), the following statements are equivalent:

(1)\ there exist a finite \((\mathbf L,f,g)\in\FARDL\) and a valuation
      \(v\) such that
      \[
       v(\gamma)=1\quad(\gamma\in\Gamma),
       \qquad
       v(\varphi)\neq1;
      \]
      
(2)\ there exist a finite
      \(\mathcal{\mathscr L}_c=(\boldsymbol{\mathfrak L},f_c)\in\CFADp\)
      and a positive-cone valuation \(w\) such that
      \[
       w(\gamma)=1\quad(\gamma\in\Gamma),
       \qquad
       w(\varphi)\neq1.
      \]
Consequently, the finite model property and the strong finite model property
for \(\mathbf{FL}^{\mathrm{FGC},d}_{ew}\) are equivalent whether they are
formulated by finite members of \(\FARDL\) or by finite positive cones of
members of \(\CFADp\).
\end{theorem}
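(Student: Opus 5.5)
The plan is to prove the two implications separately, in each case transporting the countermodel across the Kalman equivalence of Theorem~\ref{theorem5.15}, and to use Corollary~\ref{cor:finite} to control finiteness.

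For (1)$\Rightarrow$(2), let $(\mathbf L,f,g)$ be finite with a valuation $v$ refuting $\Gamma\vdash\varphi$. Take $\mathscr L_c=(\Kfun(\mathbf L),f_K)$. This algebra lies in $\CFADp$ by Theorem~\ref{theorem4.6} together with the proof of Theorem~\ref{theorem5.15}. It is finite by Corollary~\ref{cor:finite}(1), since $\Kfun(\mathbf L)\subseteq L\times L$. Define the positive-cone valuation $w=v^K$, so that $w(p)=(v(p),0)$; these values lie in the cone because $(x,0)\geq(0,0)=c$. Proposition~\ref{prop:rho-formula-transport} then gives $w(\psi)=(v(\psi),0)$ for every formula $\psi$. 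Consequently $w(\gamma)=(1,0)$, the top element, for each $\gamma\in\Gamma$. On the other hand $w(\varphi)=(v(\varphi),0)\neq(1,0)$, since $v(\varphi)\neq1$.

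For (2)$\Rightarrow$(1), let $\mathscr L_c$ be finite and let $w$ be a positive-cone countermodel. Take $(\mathbf L,f,g)=\Cfun(\mathscr L_c)$. It belongs to $\FARDL$ by Proposition~\ref{prop:positive-cone-well-defined}(5), and it is finite because its carrier $C$ is a subset of the finite carrier $\mathcal L$. By Definition~\ref{def:positive-cone-valuation}, the positive-cone interpretation of every connective is exactly the corresponding operation of $\Cfun(\mathscr L_c)$. So $w$, viewed as an ordinary valuation into this algebra, assigns the same values to all formulas. Its top element is $1$, so $w$ itself is the required countermodel.

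The consequence about the FMP and SFMP is read off by specialising. Take $\Gamma=\varnothing$ for the FMP, and $\Gamma$ finite for the SFMP. The definitions of FMP/SFMP for $\mathbf{FL}^{\mathrm{FGC},d}_{ew}$ in the two formulations then demand exactly (1) and (2) respectively, whenever $\Gamma\nvdash\varphi$. Here Proposition~\ref{prop:alg-distributive} and Theorem~\ref{thm:positive-cone-completeness} guarantee that non-derivability is the same hypothesis in both settings.

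The main point needing care is (1)$\Rightarrow$(2): one must check that the positive-cone operations of $\Kfun(\mathbf L)$ on elements $(x,0)$ really agree with those of $\mathbf L$. This includes the constant $\bot$ interpreted as $c=(0,0)=\rho(0)$, the product $\otimes_c$, and the term-defined $g_c={\sim}f_K{\sim}$. Proposition~\ref{prop:rho-formula-transport} records these computations, so I will invoke it rather than redo them. The only extra check is the constant case $\bot\mapsto c=(0,0)$, which matches $\rho_L(0)$.
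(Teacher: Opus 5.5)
Your proposal is correct and follows essentially the same route as the paper. For (1)$\Rightarrow$(2) you pass to the finite Kalman expansion $(\Kfun(\mathbf L),f_K)$ with $w(p)=(v(p),0)$ and apply Proposition~\ref{prop:rho-formula-transport}; for (2)$\Rightarrow$(1) you reuse $w$ as a valuation into the finite algebra $\Cfun(\mathscr L_c)\in\FARDL$ given by Proposition~\ref{prop:positive-cone-well-defined}, and your explicit finiteness check and FMP/SFMP specialisation only spell out what the paper leaves implicit.
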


\begin{proof}
Assume item~(1).  The Kalman expansion \((\mathbf K(\mathbf{L}),f_K)\) belongs to
\(\CFADp\) and is finite.  Define
\[
 w(p)=(v(p),0).
\]
By Proposition~\ref{prop:rho-formula-transport},
\[
 w(\alpha)=(v(\alpha),0)
\]
for every formula \(\alpha\).  Thus \(w\) is the required finite
positive-cone countermodel.

Conversely, assume item~(2).  By
Proposition~\ref{prop:positive-cone-well-defined},
\(\Cfun(\mathcal{\mathscr L}_c)\in\FARDL\).  Its universe is the finite set
\(\mathcal C(\boldsymbol{\mathfrak L})\), and the same map \(w\), regarded
as a valuation in this source algebra, witnesses item~(1).
\end{proof}

Let \(\FARDL_{\mathrm{fin}}\) denote the class of finite members of
\(\FARDL\).

\begin{theorem}
\label{thm:finite-model-characterization}
The following conditions are equivalent:

(1)\ \(\mathbf{FL}^{\mathrm{FGC},d}_{ew}\) has the finite model property;

(2)\ \(\FARDL=\operatorname{V}(\FARDL_{\mathrm{fin}})\);

(3)\ every finitely generated free \(\FARDL\)-algebra is residually finite;

(4)\ every non-theorem of \(\mathbf{FL}^{\mathrm{FGC},d}_{ew}\) has a
      finite positive-cone countermodel in \(\CFADp\).
\end{theorem}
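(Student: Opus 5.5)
The plan is to mirror the proof of Theorem~\ref{thm:FMP-characterization}, now for the distributive setting. We then attach item (4) through the finite countermodel transfer of Theorem~\ref{thm:finite-countermodel-transfer}. The equivalences (1)$\Leftrightarrow$(2)$\Leftrightarrow$(3) are proved exactly as before. We replace Theorem~\ref{thm:FARL-strong-completeness} by Proposition~\ref{prop:alg-distributive}. We use that $\FARDL$ is a variety: it is $\FARL$, which is finitely based by Lemma~\ref{lem:FARL-equational-adjunction}, together with the two distributivity identities.

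\textbf{(1)$\Rightarrow$(2).} Take an identity $s\approx t$ failing in $\FARDL$. In an integral residuated lattice, $v(s)=v(t)$ holds iff $v(s\Leftrightarrow t)=1$. So $s\Leftrightarrow t$ is not valid, and by Proposition~\ref{prop:alg-distributive} it is a non-theorem of $\mathbf{FL}^{\mathrm{FGC},d}_{ew}$. The FMP then gives a finite member of $\FARDL$ refuting $s\approx t$. Hence $\FARDL$ and $\FARDL_{\mathrm{fin}}$ have the same equational theory, and Birkhoff's theorem yields (2).

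\textbf{(2)$\Rightarrow$(1).} A non-theorem $\varphi$ makes $\varphi\approx 1$ fail in $\FARDL$, hence in some finite member.

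\textbf{(2)$\Leftrightarrow$(3).} This is the freeness argument used earlier. Distinct elements of the free algebra $\mathscr F_n$ are represented by terms $s,t$ whose identity fails. A finite refuting algebra together with an assignment yields a separating homomorphism out of $\mathscr F_n$. Conversely, residual finiteness of $\mathscr F_n$ produces finite algebras refuting each identity that fails in $\FARDL$. The only care needed is that the homomorphic images lie in $\FARDL$. This holds because $\FARDL$ is a variety, so $\mathscr F_n$ exists and its finite quotients and subalgebras stay in the class.

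\textbf{(1)$\Leftrightarrow$(4).} Specialize Theorem~\ref{thm:finite-countermodel-transfer} to $\Gamma=\varnothing$. A non-theorem $\varphi$ has a finite countermodel in $\FARDL$ iff it has a finite positive-cone countermodel in some finite $(\boldsymbol{\mathfrak L},f_c)\in\CFADp$. In the forward direction, a finite $(\mathbf L,f,g)$ is sent to $(\Kfun(\mathbf L),f_K)$, which is finite by Corollary~\ref{cor:finite}, with valuation $w(p)=(v(p),0)$, using Proposition~\ref{prop:rho-formula-transport}. In the backward direction, the finite positive cone $\Cfun(\mathcal{\mathscr L}_c)$ belongs to $\FARDL$ by Proposition~\ref{prop:positive-cone-well-defined}. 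One might want to read ``finite positive-cone countermodel'' as requiring only the cone to be finite. Corollary~\ref{cor:finite}(2) makes the two readings coincide for objects satisfying $\mathbf{CK}$.

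The main obstacle is not conceptual but a matter of bookkeeping. First, one must check that $\FARDL$ is genuinely a variety, so that Birkhoff's theorem and free algebras are available. Second, one must check that Proposition~\ref{prop:rho-formula-transport} applies to arbitrary formulas, including the constant $\bot$, which is interpreted as $c=(0,0)$ in the cone and is the image $\rho_L(0)$. I would verify this constant case explicitly before citing the transfer theorem.
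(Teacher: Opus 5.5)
Your proposal is correct and follows the paper's proof. The paper obtains (1)--(3) as the distributive specialization of Theorem~\ref{thm:FMP-characterization}, with Proposition~\ref{prop:alg-distributive} in place of the $\FARL$ completeness theorem, and (1)$\Leftrightarrow$(4) as Theorem~\ref{thm:finite-countermodel-transfer} with $\Gamma=\varnothing$; you simply make explicit the bookkeeping the paper leaves implicit ($\FARDL$ is a variety, homomorphic images of $\mathscr F_n$ stay in $\FARDL$, and $\rho_L(0)=c$).
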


\begin{proof}
The equivalence of~(1)--(3) is the distributive specialization of the
finite-model characterization proved in Section~\ref{section3}.  The
equivalence between~(1) and~(4) is
Theorem~\ref{thm:finite-countermodel-transfer} with \(\Gamma=\varnothing\).
\end{proof}

\begin{proposition}
\label{prop:finite-embeddability-criterion}
If \(\FARDL\) has the finite embeddability property, then
\(\mathbf{FL}^{\mathrm{FGC},d}_{ew}\) has the strong finite model property.
Equivalently, every failed finite consequence has a finite positive-cone
countermodel in a member of \(\CFADp\).
\end{proposition}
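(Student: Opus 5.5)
The plan is to imitate the proof of Proposition~\ref{prop:FEP-implies-SFMP}, replacing $\FARL$ by the distributive subvariety $\FARDL$, and then translate the resulting finite countermodel into a positive cone using Theorem~\ref{thm:finite-countermodel-transfer}.

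\textbf{Step 1: semantic failure.} Let $\Gamma$ be finite with $\Gamma\nvdash_{\mathbf{FL}^{\mathrm{FGC},d}_{ew}}\varphi$. By Proposition~\ref{prop:alg-distributive}, there are $(\mathbf L,f,g)\in\FARDL$ and a valuation $v$ with $v(\gamma)=1$ for all $\gamma\in\Gamma$ and $v(\varphi)\neq1$.

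\textbf{Step 2: the finite partial subalgebra.} Let $\Sigma$ be the finite set of subformulas of $\Gamma\cup\{\varphi\}$, and put $P=\{v(\psi):\psi\in\Sigma\}\cup\{0,1\}$. Make $P$ a partial algebra of type $\mathcal L_{\mathrm{FGC}}$ by defining each basic operation exactly when its value in $\mathbf L$ lies in $P$.

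\textbf{Step 3: embedding.} The FEP hypothesis, read for the class $\FARDL$, gives a finite $\mathscr B\in\FARDL$ and an injective partial-algebra embedding $e:P\hookrightarrow B$. Define $w(p)=e(v(p))$ on the variables occurring in $\Gamma\cup\{\varphi\}$, and arbitrarily elsewhere.

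\textbf{Step 4: value transfer.} Structural induction on $\Sigma$ gives $w(\psi)=e(v(\psi))$. This works because every step $\star(\psi_1,\dots,\psi_k)\in\Sigma$ is a defined partial operation in $P$. Since $e$ preserves $1$ and is injective, $w(\gamma)=1$ for all $\gamma\in\Gamma$ and $w(\varphi)\neq1$. This proves the strong finite model property for finite members of $\FARDL$.

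\textbf{Step 5: the positive-cone reformulation.} Apply the implication (1)$\Rightarrow$(2) of Theorem~\ref{thm:finite-countermodel-transfer} to the finite countermodel $(\mathscr B,w)$. Concretely, $(\Kfun(\mathscr B),f_K)$ is finite by Corollary~\ref{cor:finite}, lies in $\CFADp$ by Theorem~\ref{theorem4.6} and Theorem~\ref{theorem5.15}, and carries the positive-cone valuation $p\mapsto(w(p),0)$. By Proposition~\ref{prop:rho-formula-transport}, this valuation sends $\Gamma$ to $(1,0)$ and does not send $\varphi$ to $(1,0)$. The word ``equivalently'' in the statement is then justified by the full equivalence in Theorem~\ref{thm:finite-countermodel-transfer}: a finite positive-cone countermodel yields a finite $\FARDL$ countermodel, namely the cone itself, and conversely.

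\textbf{Main obstacle.} The only delicate point is Step 3. The embedding must land in a member of $\FARDL$, not merely in $\FARL$, because distributivity is needed both for the conclusion to concern $\mathbf{FL}^{\mathrm{FGC},d}_{ew}$ and for the Kalman construction in Step 5. The hypothesis is stated for $\FARDL$ itself, so this is exactly what the FEP supplies. I would also note explicitly that $P$ contains $0$ and $1$, so that $e$ preserves the constants in the sense required by Definition~\ref{def:FEP-FARL}.
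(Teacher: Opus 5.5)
Your proposal is correct and follows the paper's own proof essentially step for step. The paper likewise starts from an $\FARDL$ countermodel, forms the finite partial subalgebra $P$ of subformula values together with $0,1$, embeds it into a finite member of $\FARDL$ by the FEP, and transfers values by structural induction. It then invokes Theorem~\ref{thm:finite-countermodel-transfer} to obtain the finite positive-cone countermodel; your Step 5 simply unpacks the (1)$\Rightarrow$(2) direction of that theorem through the Kalman expansion and the valuation $p\mapsto(w(p),0)$, which is how the paper proves that direction.
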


\begin{proof}
Let \(\Gamma\) be finite and suppose
\[
 \Gamma\nvdash_{\mathbf{FL}^{\mathrm{FGC},d}_{ew}}\varphi.
\]
Choose \((\mathbf L,f,g)\in\FARDL\) and a valuation \(v\) with all premises
of value \(1\) and conclusion of value different from \(1\).  Let \(\Sigma\)
be the finite set of subformulas of formulas in
\(\Gamma\cup\{\varphi\}\), and put
\[
 P=\{v(\psi):\psi\in\Sigma\}\cup\{0,1\}.
\]
Restrict each basic operation to tuples from \(P\) whose value remains in
\(P\).  This is a finite partial subalgebra.  By the finite embeddability
property, it embeds into a finite member of \(\FARDL\).  A structural
induction preserves all values of relevant subformulas and yields a finite
source countermodel.  Theorem~\ref{thm:finite-countermodel-transfer} converts
it into a finite positive-cone countermodel.
\end{proof}

\section{Concluding remarks}

In the present paper, we introduced the Frobenius--Galois logic $\mathbf{FL}^{\mathbf{FGC}}_{ew}$ and identified $\FARL$ as its equivalent algebraic semantics. In the distributive setting, the Kalman and positive-cone functors yield a categorical equivalence between $\FARDL$ and $\CFADp$, and this equivalence induces a positive-cone semantics for $\mathbf{FL}^{\mathbf{FGC},d}_{ew}$ preserving logical consequence, equational consequence, and finite countermodels.  Here we record two problems that seem particularly relevant to the algebraic, categorical, and logical structure of Frobenius--Galois expansions.

\begin{openproblem}\label{op:nondistributive-Kalman}
Can the Kalman equivalence established in Theorem~\ref{theorem5.15}
be extended from $\FARDL$ to the non-distributive variety $\FARL$?
More precisely, does there exist a category $\mathcal K$ of 
 Frobenius-adjoint centered involutive  residuated lattices and functors
\[
\mathbf K:\FARL\longrightarrow\mathcal K,
\qquad
\mathbf C:\mathcal K\longrightarrow\FARL
\]
such that
\[
\FARL\simeq\mathcal K,
\]
and whose restriction to the distributive subcategories is naturally
isomorphic to the equivalence of Theorem~\ref{theorem5.15}? If an
equivalence does not exist on $\FARL$, characterize the
subvarieties of $\FARL$ for which a Kalman-type equivalence exists and
determine whether there is a largest such subvariety.
\end{openproblem}

\begin{openproblem}\label{op:logical-Kalman-translation}
Does there exist an algebraizable logic $\mathcal L_c$ whose equivalent
algebraic semantics is $\CFADp$ and a finite contextual translation
between $\mathbf{FL}^{\mathbf{FGC},d}_{ew}$ and $\mathcal L_c$ that
preserves and reflects derivability in both directions? More precisely,
can one construct translations $\tau$ and $\rho$ such that the induced
algebraic correspondence agrees, up to natural isomorphism, with the
Kalman and positive-cone functors $\mathbf K$ and $\mathbf C$ of
Theorem~\ref{theorem5.15}?
\end{openproblem}
\medskip
\noindent{\bf CRediT authorship contribution statement}
\medskip

\noindent \textbf{Juntao Wang:} Writing--review \& editing, Writing--original draft, \textbf{Jieqiong Shi:}Writing--review \& editing, \textbf{Mei Wang:}Writing--review \& editing.

\medskip
\noindent{\bf Acknowledgments} 
\medskip

This work is supported by the National Natural Science Foundation of China (12501647), the Humanities and Social Sciences Youth Foundation of Ministry of Education of China ``A propositional calculus formal study of monadic substructural logics''(24XJC72040001), and the Natural Science Basic Research Plan in Shaanxi Province of China (2025JC-YBMS-034, 2025JC-YBQN-092), and the Scientific Research Program Funded by Education Department of Shaanxi Provincial Government (Youth Innovation Team of Shaanxi Universities) (23JP132).

\medskip
\noindent{\bf Conflict of interest} 
\medskip

The authors declare that they have no Conflict of interest.

\medskip
\noindent{\bf Data availability} 
\medskip

 No data was used for the research described in the article.

\end{document}